\documentclass[11pt,a4paper]{article}
\usepackage[T1]{fontenc}
\usepackage[utf8]{inputenc}
\usepackage{lmodern}
\usepackage[margin=1in]{geometry}
\usepackage{amsmath,amssymb,amsthm,mathtools,mathrsfs}
\usepackage{microtype}
\usepackage{bbm}
\usepackage{enumitem,booktabs,tabularx,array}
\usepackage[colorlinks=true,linkcolor=blue,citecolor=blue,urlcolor=blue]{hyperref}
\allowdisplaybreaks
\numberwithin{equation}{section}
\newcommand{\C}{\mathbb C}
\newtheorem{theorem}{Theorem}[section]
\newtheorem{lemma}[theorem]{Lemma}
\newtheorem{proposition}[theorem]{Proposition}
\newtheorem{corollary}[theorem]{Corollary}
\theoremstyle{definition}
\newtheorem{definition}[theorem]{Definition}
\newtheorem{example}[theorem]{Example}
\theoremstyle{remark}
\newtheorem{remark}[theorem]{Remark}
\hypersetup{pdftitle={Structure theory of finite solvable Lie conformal algebras},
	pdfsubject={Cartan inner conjugacy, conformal tori and solvable extensions},pdfauthor={}}
\title{Structure theory of finite solvable Lie conformal algebras}

\author{Bakhrom Omirov, Yuhui Tan}
\date{}
\providecommand{\address}[1]{}
\renewcommand{\address}[1]{%
	\AtEndDocument{%
		\par\bigskip
		{\small\noindent\textsc{#1}\par}%
	}%
}

\providecommand{\email}[1]{}
\renewcommand{\email}[1]{%
	\AtEndDocument{%
		\par\nopagebreak
		{\small\noindent\textit{Email address:} \texttt{#1}\par}%
	}%
}

\address{Bakhrom A. Omirov \newline \indent
	Institute for Advanced Study in Mathematics,
	Harbin Institute of Technology, Harbin 150001, China \newline \indent
	Suzhou Research Institute, Harbin Institute of Technology,
	Suzhou 215104, China}
\email{omirovb@mail.ru}

\address{Yuhui Tan \newline \indent
	Institute for Advanced Study in Mathematics,
	Harbin Institute of Technology, Harbin 150001, China}
\email{26B312001@stu.hit.edu.cn}

\begin{document}

	\maketitle
	\begin{abstract} 
		
		We develop a structure theory of finite solvable Lie conformal algebras. It is proved that every such algebra admits a Cartan subalgebra, and any two Cartan subalgebras are conjugate by a finite product of exponentials of nilpotent zero modes of elements of the derived algebra. Applied to finite vertex algebras, this yields inner conjugacy of Cartan subalgebras, answering a question of D’Andrea and Marchei. The nilradical and solvable radical of an arbitrary finite Lie conformal algebra are shown to be saturated and invariant under ordinary and conformal derivations. For finite free Lie conformal algebras, we prove a simultaneous diagonalization theorem for conformal tori and characterize the existence of a nonzero torus by explicit identities for constant weights. An example demonstrates that maximal conformal tori of a finite nilpotent Lie conformal algebra can have different ranks, so that conjugacy fails in general. The totally saturated null-filiform Lie conformal algebras admitting a nonzero conformal torus are classified. For each algebra in this classification, we prove that maximal conformal tori have rank one and that the maximal solvable extension with the prescribed nilradical is unique (up to isomorphism) and is the semidirect product with a maximal conformal torus.
	\end{abstract}

	\noindent\textbf{MSC 2020:} Primary: 17B69; Secondary: 17B05, 17B30, 17B40.
	
	\medskip
	\noindent\textbf{Keywords.} Lie conformal algebra; Cartan subalgebra; conformal torus; null-filiform algebra; solvable extension.
	
	\section{Introduction}\label{in:section}

	Lie conformal algebras encode the singular part of operator product expansions and the commutation relations of local formal distributions \cite{Kac, KacFormal}. Their defining identities combine a bracket depending on a formal parameter with the action of a translation operator, giving rise to structural problems. Finite generation over \(\mathbb C[\partial]\) provides a common framework for studying solvable, nilpotent, and semisimple Lie conformal algebras. Within this framework, D’Andrea and Kac established a conformal analogue of Lie’s theorem for solvable algebras and an analogue of Engel’s theorem characterizing nilpotence through nilpotent adjoint actions. They also classified the finite simple and semisimple Lie conformal algebras \cite{DK}.
	
	The classification of the semisimple objects leaves a substantial part of the general structure problem unresolved. For a finite Lie conformal algebra with nonzero solvable radical, its structure also depends on the radical and on the extension through which it is attached to the
	semisimple quotient. In finite-dimensional complex Lie theory, Levi's theorem separates these questions by expressing every algebra as a semidirect product of its solvable radical and a semisimple subalgebra \cite{Bourbaki2005}. The conformal analogue of this theorem fails: the extension of the semisimple quotient by the solvable radical need not split \cite{DK}. Consequently, a structure theory of finite Lie conformal algebras must account for solvable radicals and the extension as well as semisimple quotients. Understanding solvable Lie conformal algebras is therefore a necessary part of the general
	structure problem.
	
	There is a another motivation for the investigation of solvable Lie conformal algebras coming from vertex algebras. D'Andrea
	proved that the Lie conformal algebra underlying every finite vertex algebra is solvable \cite{AD2008}. Thus, in this setting, the solvable case describes the entire underlying Lie conformal structure. Structural results for solvable Lie conformal algebras can accordingly be applied to the decompositions and automorphisms of finite vertex algebras. The inner conjugacy theorem proved here provides one such application, resolving the question raised by D'Andrea and Marchei about Cartan subalgebras \cite[\S5.4]{DM2012}.
	
	Classical solvable Lie theory suggests studying these problems through the nilradical and its derivations. Cartan subalgebras organize the adjoint representation, while tori of derivations describe diagonal actions relevant to extensions with a prescribed nilradical \cite{Bourbaki2005,OS, Snobl2010}. In the conformal setting, this approach requires additional structural restrictions. A quotient of a free $\mathbb{C}[\partial]$-module may have torsion, and a proper submodule may possess the same rank as the ambient module. Compositions of conformal
	endomorphisms involve shifts in the translation variable; moreover, an element can generate a nonabelian conformal subalgebra. The interaction between the module structure and the bracket must therefore be incorporated into the arguments for decompositions, conjugacy, and extensions. Existing classifications in low rank, including that of finite free Lie conformal algebras of rank two \cite{BCH}, illustrate the interaction between the conformal bracket and the underlying module structure. These results motivate the development of structural principles for solvable and nilpotent Lie conformal algebras of arbitrary finite rank.
	
	In this work we develop our approach along three related lines: the existence and inner conjugacy of Cartan subalgebras, the structure of conformal tori, and solvable extensions with a prescribed nilradical. Together, these results establish conformal analogues of several central principles of classical solvable Lie theory and identify a failure of torus conjugacy that distinguishes the two settings.
	
	For an arbitrary finite Lie conformal algebra, both the nilradical and the solvable radical are saturated and invariant under ordinary and conformal derivations. Saturation ensures that the corresponding quotients are free over $\C[\partial]$, so that module complements exist. We also show that an ordinary or conformal derivation of a finite nilpotent Lie conformal algebra $\mathcal N$ is nilpotent if and only if its induced action on the abelianization $\mathcal N/\mathcal N^2$ is nilpotent. These properties make it possible to study solvable extensions through nil-independent conformal derivations of their nilradicals.
	
	Our Cartan theorem holds for every finite solvable Lie conformal
	algebra, without a freeness assumption. Such an algebra admits a
	Cartan subalgebra, and any two Cartan subalgebras are conjugate by a finite product of exponentials of nilpotent zero modes of elements of the derived algebra. Applied to a finite vertex algebra, these zero modes are derivations of the vertex algebra itself. Their exponentials therefore give inner vertex algebra automorphisms, strengthening the automorphism conjugacy established in \cite{DM2012} to inner conjugacy.
	
	For a finite free Lie conformal algebra $\mathcal L$, a
	\emph{conformal torus} is an abelian conformal subalgebra of
	$\operatorname{CDer}(\mathcal L)$ consisting of diagonalizable
	conformal endomorphisms. We prove simultaneous diagonalizability
	and show that every maximal conformal torus has a basis acting
	by constant diagonal matrices in a common basis of $\mathcal L$.
	This reduces the existence of a nonzero conformal torus to explicit linear identities for constant weights. Besides the usual weight addition rule, these identities constrain the dependence of the structure polynomials on the translation and bracket variables. In a basis diagonalizing a maximal torus, the dimension of their solution space equals the rank of that torus. Nevertheless, maximal conformal tori need not be conjugate: we construct a finite nilpotent Lie conformal algebra with maximal tori of different ranks.
	
	In this paper we also study finite free nilpotent Lie conformal algebras of maximal nilindex. A nilpotent Lie algebra of dimension $n$ has nilindex at most $n$, since its abelianization has dimension at least two. In the conformal setting, however, there exist nilpotent Lie conformal algebras of rank $n$ with nilindex $n+1$; see \cite{SYY}. Following the terminology of Leibniz algebra theory, we call such algebras \emph{null-filiform}. These algebras provide conformal counterparts to the nilpotent Lie algebras of maximal nilpotency class studied in \cite{CabezasPastor,CaglieroTirao,GomezJimenez,Vergne1970}. 
	We focus our approach to the class of totally saturated null-filiform Lie conformal algebras admitting a nonzero torus. Total saturation means that every term of the lower central series is saturated; its nonzero successive quotients are then free of rank one. These conditions permit a basis adapted to the lower central series, in which the torus weight identities impose strong restrictions on the brackets. We obtain the families $\mathcal A_n$, $\mathcal C_n(\beta)$, and $\mathcal D_n(\gamma)$ and determine their isomorphism classes. For every algebra in this classification, maximal conformal tori have rank one and are conjugate by the exponential of a nilpotent ordinary derivation.
	
	For these nilradicals, the structural results also determine the
	maximal solvable extension. If $\mathcal N$ is totally saturated
	and null-filiform and admits a nonzero conformal torus, every finite solvable Lie conformal algebra $\mathcal R$ with nilradical $\mathcal N$ satisfies $\operatorname{rank}(\mathcal R/\mathcal N)\leq1$ and embeds into 
	$$\mathcal R_{\mathcal N}=\C[\partial]d\ltimes\mathcal N,
	\qquad d_\lambda e_i=i e_i,\quad 1\leq i\leq n,$$
	where $\{e_1, \ldots, e_n\}$ is a suitable adapted basis of
	$\mathcal N$ and the embedding maps $\mathcal N$ onto itself.
	Hence $\mathcal R_{\mathcal N}$ is, up to isomorphism, the unique
	extension maximal under embeddings preserving the prescribed
	nilradical. In this class, Cartan theory and the conjugacy of
	maximal conformal tori thus lead from a classification of
	nilpotent algebras to a determination of their maximal solvable
	extensions.
	
	The paper is organized as follows. Section~2
	recalls the necessary facts about Lie conformal algebras, conformal modules, and generalized weights. Section~3 studies derivations and proves the invariance of radicals and the nilpotence criterion on the abelianization. Section~4 establishes
	Cartan existence and inner conjugacy, including the application
	to finite vertex algebras. Section~5 develops the theory of conformal tori and gives the example of maximal tori of different ranks. Section~6 proves the null-filiform classification and conjugacy of maximal tori within that class. Finally, Section~7 establishes saturation of radicals and studies solvable extensions, proving the maximal-extension theorem for the classified nilradicals.
	
	Throughout the paper, all modules are assumed to be finitely generated over $\C[\partial]$, unless otherwise stated. We write $\operatorname{rank}(\mathcal M)$ for the rank of a module
	$\mathcal M$ over $\C[\partial]$ and use the abbreviation \emph{LCA} for Lie conformal algebra. Unless otherwise specified, $\mathcal L$, $\mathcal R$, and $\mathcal N$ denote an arbitrary
	finite LCA, a solvable finite LCA, and a nilpotent finite LCA, respectively.
	
	\section{Preliminaries}\label{pr:section}
	
	This section recalls the basic definitions and structural results on Lie conformal algebras and their modules, including generalized weights and the associated decompositions; see \cite{BDK,ChengKac,DK,DM2012}.
	
	\begin{definition}\label{def:lie-conformal-algebra}
		An \emph{LCA} $\mathcal{L}$ is a module endowed with a $\C$-bilinear $\lambda$-bracket
		$\mathcal{L}\otimes \mathcal{L}\rightarrow \mathcal{L}[\lambda]$ as $a\otimes b\longmapsto [a_\lambda b],$ satisfying the following identities,
		\begin{align*}
			[\partial a_\lambda b]&=-\lambda[a_\lambda b],\quad [a_\lambda\partial b]=(\partial+\lambda)[a_\lambda b],&&\text{\emph{(sesquilinearity)}},\\
			[a_\lambda b]&=-[b_{-\partial-\lambda}a],&&\text{\emph{(skew-symmetry)}},\\
			[[a_\lambda b]_{\lambda+\mu}c]&=[a_\lambda[b_\mu c]]-[b_\mu[a_\lambda c]],&&\text{\emph{(Jacobi identity)}}.
		\end{align*}
	\end{definition}
	
	For submodules $\mathcal{U}$ and $\mathcal{V}$ of $\mathcal{L}$ we denote by $[\mathcal{U}\cdot \mathcal{V}]$ the submodule of $\mathcal{L}$ generated by all coefficients in $\lambda$ of the $\lambda$-brackets $[u_\lambda v]$, where $u\in \mathcal{U}$ and $v\in \mathcal{V}$.
	
	The lower central and the derived series of $\mathcal{L}$ are, respectively,
	$$\mathcal{L}^1=\mathcal{L},\quad \mathcal{L}^{i+1}=[\mathcal{L}\cdot \mathcal{L}^i];\qquad \mathcal{L}^{(1)}=\mathcal{L},\quad \mathcal{L}^{(i+1)}=[\mathcal{L}^{(i)}\cdot \mathcal{L}^{(i)}].$$
	
	An algebra $\mathcal L$ is called \emph{nilpotent} if
	$\mathcal L^n=0$ for some $n\in \mathbb{N}$. The minimal such $n$ is called the
	\emph{nilindex} of $\mathcal L$. Similarly, $\mathcal L$ is called \emph{solvable} if $\mathcal L^{(m)}=0$ for some $m\in \mathbb{N}$, and the minimal such $m$ is called the \emph{derived length} of $\mathcal L$.

	We shall use the standard facts concerning nilpotent and solvable ideals. We have 
	$$[\mathcal L^i\cdot\mathcal L^j]\subseteq\mathcal L^{i+j},\qquad\mathcal L^{(i)}\subseteq\mathcal L^{2^{i-1}},\qquad\bigl(\mathcal L^{(i)}\bigr)^{(j)}=\mathcal L^{(i+j-1)}.$$
	
	As in the case of classical Lie algebras, any finite sum of nilpotent (respectively, solvable) ideals is nilpotent (respectively, solvable). Since $\mathcal L$ is a Noetherian module, the sum of all nilpotent ideals, respectively all solvable ideals, is a finite sum. Hence there exist unique maximal nilpotent and solvable ideals of $\mathcal L$, called the \emph{nilradical} and the \emph{solvable radical} and denote them by $\operatorname{Nil}(\mathcal L)$ and $\operatorname{Rad}(\mathcal L),$ respectively.
	
	For a submodule $\mathcal{U}\subseteq \mathcal{M}$ we define the {\it saturation} of $\mathcal{U}$ and the {\it torsion} of $\mathcal{M}$ as follows:
	$$\begin{array}{llll}
		\operatorname{Sat}_{\mathcal{M}}(\mathcal{U})&=&\{m\in \mathcal{M}\mid p m\in \mathcal{U}\text{ for some } 0\ne p\in \C[\partial]\},\\[3mm]
		\operatorname{Tor}(\mathcal {M})&=&\{m\in \mathcal {M} \mid pm=0 \text{ for some } 0\neq p\in\mathbb{C}[\partial]\}.\\
	\end{array}
	$$
	Note that due to \cite{DK} we have  $\operatorname{Tor}(\mathcal M)\subseteq \operatorname{Center}(\mathcal M)$.
	
	Note that saturation preserves rank: for every submodule $\mathcal U$ of $\mathcal L$, one has 
	$\operatorname{rank}\bigl(\operatorname{Sat}_{\mathcal L}(\mathcal U)\bigr)=\operatorname{rank}(\mathcal U).$ 
	
	A submodule $\mathcal{U}\subseteq\mathcal{M}$ is \emph{saturated} if $\mathcal{U}=\operatorname{Sat}_{\mathcal{M}}(\mathcal{U})$, or equivalently, if $\mathcal{M}/\mathcal{U}$ is torsion-free.
	Since $\C[\partial]$ is a PID, every finite torsion-free module is free. Hence
	$$\mathcal L\cong_{\C[\partial]}\operatorname{Tor}(\mathcal L)\oplus\bigl(\mathcal L/\operatorname{Tor}(\mathcal L)\bigr).$$
	
	\begin{definition}
		A \emph{conformal $\mathcal{L}$-module} is a module $\mathcal{M}$
		equipped with a $\C$-bilinear action
		$(a,m)\mapsto a_\lambda m\in\mathcal{M}[\lambda]$ satisfying
		$$(\partial a)_\lambda m=-\lambda a_\lambda m,\qquad a_\lambda(\partial m)=(\partial+\lambda)a_\lambda m, \qquad a_\lambda(b_\mu m)-b_\mu(a_\lambda m)=[a_\lambda b]_{\lambda+\mu}m$$
		for all $a,b\in\mathcal{L}$ and $m\in\mathcal{M}$.
	\end{definition}
	
	Let $\mathcal{M}$ be a finite conformal $\mathcal{L}$-module.
	A $\C$-linear map $\phi\colon \mathcal{L}\rightarrow \C[\lambda],$ $a\mapsto \phi_a(\lambda),$ 
	is called a \emph{weight} of $\mathcal{M}$ if there exists an 
	element $0\neq m\in\mathcal{M}$ such that $a_\lambda m=\phi_a(\lambda)m$ for all $a\in\mathcal{L}.$ Any such element $m$ is called a \emph{weight vector} of weight $\phi$.
	
	\begin{definition} \label{def2.3}  Let $\phi$ be a weight of $\mathcal{M}$. Define an increasing sequence of $\C[\partial]$-submodules recursively by
		$$\mathcal{M}^\phi_{-1}=0, \quad \mathcal{M}^\phi_{j+1}
		=\operatorname{Span}_{\C[\partial]} 
		\left\{m\in\mathcal{M}\;\middle|\;
		a_\lambda m-\phi_a(\lambda)m\in\mathcal{M}^\phi_j[\lambda]
		\text{ for all }a\in\mathcal{L}\right\},
		\quad j\geq -1.
		$$
		The submodule of $\mathcal{M}$
		$$\mathcal{M}^\phi:=\bigcup_{j\geq -1}\mathcal{M}^\phi_j$$
		is called the \emph{generalized weight submodule} of $\mathcal{M}$
		associated with $\phi$.
	\end{definition}
	
	Let $\mathcal{M}$ be a conformal $\mathcal{R}$-module that is free of rank $r$. By the conformal analogue of Lie's theorem
	\cite{DK}, $\mathcal{M}$ admits a saturated flag of conformal
	$\mathcal{R}$-submodules
	\begin{equation}\label{ca:triangular-character}
		\begin{gathered}
			0=\mathcal{M}_0\subset\mathcal{M}_1\subset\cdots
			\subset\mathcal{M}_r=\mathcal{M}, \qquad
			\mathcal{M}_i/\mathcal{M}_{i-1}=\C[\partial]\bar m_i\\
			a_\lambda\bar m_i=\phi_i(a)\bar m_i, \qquad
			\phi_i:\mathcal{R}\to\C[\lambda],
		\end{gathered}
	\end{equation}
	where each $\C$-linear map $\phi_i\colon\mathcal{R}\to\C[\lambda]$
	vanishes on $\mathcal{R}^2$.
	For the adjoint module $\mathcal{M}=\mathcal{R}$, the terms
	$\mathcal{M}_i$ are ideals that satisfies $[\mathcal{R}^2{}_\lambda\mathcal{M}_i]
	\subseteq\mathcal{M}_{i-1}[\lambda].$
	
	Let $\mathcal M$ be a finite conformal $\mathcal N$-module.
	By \cite{BDK,DM2012}, $\mathcal M$ admits the generalized
	weight decomposition
	
	\begin{equation}\label{ca:weight-decomposition}
		\mathcal{M}=\bigoplus_{\phi\in\operatorname{W}(\mathcal{M})}\mathcal{M}^{\phi},
	\end{equation}
	where $\operatorname{W}(\mathcal{M})$ denotes the
	set of weights of $\mathcal{M}$ and  its cardinality is bounded above by the minimal number of generators of $\mathcal{M}$.
	
	Recall, a \emph{conformal endomorphism} of $\mathcal{L}$ is a $\C$-linear map $D_\lambda:\mathcal{L}\to\mathcal{L}[\lambda]$ satisfying $D_\lambda(\partial a)=(\partial+\lambda)D_\lambda a$ for all $a\in\mathcal{L}$. The space $\operatorname{Cend}(\mathcal{L})$ of all such maps is a module under the action $(\partial D)_\lambda=-\lambda D_\lambda$.
	
	We recall the following two notions. A \emph{conformal derivation} of $\mathcal L$ is an element $D\in\operatorname{Cend}(\mathcal L)$, while a \emph{derivation} of $\mathcal L$ is an element $d\in\operatorname{End}_{\mathbb{C}[\partial]}(\mathcal L)$, satisfying, respectively,
	$$D_\lambda[a_\mu b]=[(D_\lambda a)_{\lambda+\mu}b]+[a_\mu D_\lambda b],\qquad
	d[a_\lambda b]=[d(a)_\lambda b]+[a_\lambda d(b)].$$
	Their spaces are denoted by $\operatorname{CDer}(\mathcal L)$ and $\operatorname{Der}(\mathcal L)$, respectively.
	
	A conformal derivation is called \emph{inner} if it equals $\operatorname{ad}_x$ for some $x\in\mathcal{L}$, where $(\operatorname{ad}_x)_\lambda y=[x_\lambda y]$ for all $y\in\mathcal{L}$. The module of inner conformal derivations is denoted by $\operatorname{CInn}(\mathcal{L})$.
	For $D\in\operatorname{CDer}(\mathcal{L})$, evaluation at $\lambda=0$ yields $D_0\in\operatorname{Der}(\mathcal{L})$. A conformal endomorphism $D$ is called \emph{nilpotent} if $D_{\lambda_m}\cdots D_{\lambda_1}=0$ for some $m$, where $\lambda_1,\ldots,\lambda_m$ are independent parameters.
	
	\section{Radical invariance and nilpotence of derivations}
	
	Conformal derivations and their generalizations have been
	studied for various classes of Lie conformal algebras
	\cite{FanHongSu,Hong,LeiTan}. This section establishes the invariance of radicals under ordinary and conformal derivations and a criterion for detecting nilpotence of derivations on the abelianization of a nilpotent Lie conformal algebra.
	
	\begin{lemma}\label{pr:derivation-nilpotent-ideal}
		Let $\mathcal I$ be a nilpotent ideal of $\mathcal L$. Then, for every
		$D\in\operatorname{CDer}(\mathcal L)$ and every $\alpha\in\mathbb C$,
		the sum $\mathcal I+D_\alpha(\mathcal I)$ is a nilpotent ideal of
		$\mathcal L$, where $D_\alpha={D_\lambda}_{ | \lambda=\alpha}$.
	\end{lemma}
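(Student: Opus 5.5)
The plan is to prove two things separately: that $\mathcal J:=\mathcal I+D_\alpha(\mathcal I)$ is an ideal, and that it is nilpotent. Both rest on the derivation identity
$$D_\mu[a_\lambda b]=[(D_\mu a)_{\lambda+\mu}b]+[a_\lambda D_\mu b],$$
specialized at $\mu=\alpha$. Since $D_\mu[a_\lambda b]$ is polynomial in the two independent variables $\lambda,\mu$, we may substitute $\mu=\alpha$. The result says that $D_\alpha$, applied coefficientwise in $\lambda$, satisfies $D_\alpha[a_\lambda b]=[(D_\alpha a)_{\lambda+\alpha}b]+[a_\lambda D_\alpha b]$.

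\emph{Step 1 ($\mathcal J$ is a submodule and an ideal).} First, $D_\alpha(\partial a)=(\partial+\alpha)D_\alpha a$ gives $\partial D_\alpha a=D_\alpha(\partial a)-\alpha D_\alpha a$. Hence $D_\alpha(\mathcal I)$ is a $\C[\partial]$-submodule whenever $\mathcal I$ is. Next, for $x\in\mathcal L$ and $a\in\mathcal I$ the identity gives
$$[x_\lambda D_\alpha a]=D_\alpha[x_\lambda a]-[(D_\alpha x)_{\lambda+\alpha}a].$$
The first term has coefficients in $D_\alpha(\mathcal I)$, because $\mathcal I$ is an ideal. The second has coefficients in $\mathcal I$. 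So $[\mathcal L\cdot\mathcal J]\subseteq\mathcal J$, and skew-symmetry handles brackets on the other side.

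\emph{Step 2 (nilpotence).} Let $N$ be such that $\mathcal I^N=0$. For $s\ge0$, $\mathcal J^m$ is spanned over $\C[\partial]$ by $\lambda$-coefficients of left-normed brackets $[x_1{}_{\lambda_1}[x_2{}_{\lambda_2}\cdots x_m]]$ with each $x_i\in\mathcal I\cup D_\alpha(\mathcal I)$. I will prove the more general claim:
\begin{quote}
if $x_i\in D_\alpha^{s_i}(\mathcal I)$ and $S=\sum s_i$, then every coefficient of such a bracket lies in $\sum_{t=0}^{S}D_\alpha^t(\mathcal I^m)$.
\end{quote}
The proof is by induction on $S$. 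The case $S=0$ holds by definition of $\mathcal I^m$, since $\mathcal I$ is an ideal and its iterated brackets lie in $\mathcal I^m$. For $S>0$, pick a factor $x_j=D_\alpha y_j$ with $y_j\in D_\alpha^{s_j-1}(\mathcal I)$. Applying the derivation identity repeatedly (Leibniz over the nested bracket, with the shifted variables $\lambda_i+\alpha$), write $D_\alpha$ of the bracket with $y_j$ in place of $x_j$. This equals the original bracket plus a sum of brackets in which $D_\alpha$ hits a different factor $x_i$, with parameters shifted by $\alpha$. Solving for the original bracket expresses it as:
\begin{itemize}
\item $D_\alpha$ applied to a bracket of total degree $S-1$, and
\item minus brackets where one $D_\alpha$ has moved from $x_j$ onto another factor.
\end{itemize}
The second kind still has total degree $S$, so a secondary induction is needed. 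I will order the multi-index $(s_1,\dots,s_m)$ lexicographically and always pull $D_\alpha$ off the first factor with $s_j>0$, onto factors further right. The terminal configurations, where all $D_\alpha$'s sit on $x_m$, are handled directly: $[x_1\cdots[x_{m-1}{}_\lambda D_\alpha^{s}y]]$ with $x_1,\dots,x_{m-1}\in\mathcal I$ is rewritten by the ideal-type identity from Step 1 used backwards. The shifts of parameters only affect which polynomial coefficients appear, not the submodules involved. Finally, $\mathcal I^m$ is an ideal, so $D_\alpha^t(\mathcal I^m)$ is a submodule, and the claim closes the induction.

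Taking $m=N$ then gives $\mathcal J^N\subseteq\sum_t D_\alpha^t(\mathcal I^N)=0$, so $\mathcal J$ is nilpotent.

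The main obstacle is the bookkeeping in Step 2: making sure the termination order on moving $D_\alpha$ actually decreases, and that the coefficients of the shifted brackets still land in the stated sum. If the lexicographic induction becomes awkward, the fallback is the cruder but safer claim $\mathcal J^{m}\subseteq\sum_{t\ge0}D_\alpha^t(\mathcal I^{\lceil m/2\rceil})$, proved by induction on $m$ via $[\mathcal J\cdot\mathcal J^{m}]$ and the identity of Step 1.
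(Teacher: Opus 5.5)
\noindent Your Step~1 is correct and matches the paper. Step~2, however, rests on a claim that is false, so there is a genuine gap. Let $\mathfrak h_3$ have basis $x,y,z$ with $[x,y]=z$, and let $\theta$ be its derivation $\theta(x)=-y$, $\theta(y)=x$, $\theta(z)=0$. Put $\mathcal L=\operatorname{Cur}(\mathfrak h_3)$ with the conformal derivation $D_\lambda\bigl(p(\partial)g\bigr)=p(\partial+\lambda)\,\theta(g)$.

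\noindent Then $\mathcal I=\C[\partial]y\oplus\C[\partial]z$ is an abelian ideal, so $\mathcal I^2=0$. On the other hand $D_0(\mathcal I)=\C[\partial]x$, so $\mathcal J=\mathcal L$ and $\mathcal J^2=\C[\partial]z\neq0$. Take $m=2$, $x_1=D_0y=x$, $x_2=y$, so $S=1$. Your claim would put $[x_\lambda y]=z$ into $\mathcal I^2+D_0(\mathcal I^2)=0$. More generally, no argument can give $\mathcal J^N\subseteq\sum_t D_\alpha^t(\mathcal I^N)$, because the nilindex of $\mathcal J$ can genuinely exceed that of $\mathcal I$.

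\noindent The failure sits in your ``terminal configurations''. Rewriting $[y_\lambda D_\alpha b]$ by the Step~1 identity gives $D_\alpha[y_\lambda b]-[(D_\alpha y)_{\lambda+\alpha}b]$, which is the term you started from. So the reduction is circular rather than terminating.

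\noindent The fallback is not rescued by the induction you describe either. Write $V_k=\sum_t D_\alpha^t(\mathcal I^k)$. Passing from $\mathcal J^{2k}\subseteq V_k$ to $\mathcal J^{2k+1}\subseteq V_{k+1}$ via $[\mathcal J\cdot\mathcal J^{2k}]$ needs $[\mathcal J\cdot V_k]\subseteq V_{k+1}$. In the example $V_1=\mathcal L$ and $V_2=0$, but $[\mathcal J\cdot V_1]=\C[\partial]z\neq0$. The example does not refute the fallback bound itself, since $\mathcal J^3=0$ there. It does show that an induction carrying only that inclusion cannot prove it.

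\noindent What is missing is a mechanism that absorbs the $D_\alpha$-factors collectively rather than one at a time, and that lets the nilpotency bound grow with the nilindex of $\mathcal I$. The paper works with operators.
\begin{itemize}
\item Let $\mathcal A$ be the associative algebra generated by all $(\operatorname{ad}_x)_\beta$. The two-sided ideal $\mathcal K$ generated by the $(\operatorname{ad}_i)_\beta$ with $i\in\mathcal I$ satisfies $\mathcal K^m=0$, where $m$ is the nilindex of $\mathcal I$.
\item The map $\delta=[D_\alpha,-]$ is a derivation of $\mathcal A$, and $\delta\bigl((\operatorname{ad}_i)_{\beta-\alpha}\bigr)=(\operatorname{ad}_{D_\alpha i})_\beta$.
\item Expand $0=\delta^m(k_1\cdots k_m)$ by the multinomial Leibniz rule. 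Every term except $m!\,\delta(k_1)\cdots\delta(k_m)$ contains an undifferentiated factor from $\mathcal K$. Hence $\delta(k_1)\cdots\delta(k_m)\in\mathcal K$.
\item Therefore $\mathcal B=\mathcal K+\delta(\mathcal K)$ satisfies $\mathcal B^m\subseteq\mathcal K$, and so $\mathcal B^{m^2}=0$.
\item Every $(\operatorname{ad}_x)_\beta$ with $x\in\mathcal J$ lies in $\mathcal B$. So iterated brackets of $m^2+1$ elements of $\mathcal J$ vanish at all scalar values of the parameters, giving $\mathcal J^{m^2+1}=0$.
\end{itemize}
This growth of the bound from $m$ to $m^2+1$ is exactly what your claim ruled out.
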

	
	\begin{proof}
		Set $\varphi=D_\alpha$ and $\mathcal J=\mathcal I+\varphi(\mathcal I)$.
		The equality $\partial\varphi(i)=\varphi(\partial i)-\alpha\varphi(i)$ for all $i\in\mathcal I,$ shows that $\mathcal J$ is a $\mathbb C[\partial]$-submodule of
		$\mathcal L$. Moreover, the conformal derivation identity yields
		\[
		[x_\lambda\varphi(i)]
		=\varphi([x_\lambda i])-[\varphi(x)_{\lambda+\alpha}i]
		\in\mathcal J[\lambda],
		\qquad x\in\mathcal L,\quad i\in\mathcal I.
		\]
		Thus $\mathcal J$ is an ideal of $\mathcal L$.
		
		Let $\mathcal A$ be the associative subalgebra of
		$\operatorname{End}_{\mathbb C}(\mathcal L)$ generated by
		$\bigl\{(\operatorname{ad}_x)_\beta \ | \ x\in\mathcal L,\ \beta\in\mathbb C\bigr\},$
		and let $\mathcal K$ be the two-sided ideal of $\mathcal A$ generated by those operators for which $x\in\mathcal I$. 
		Since $\mathcal K(\mathcal L)\subseteq\mathcal I$ and $\mathcal K(\mathcal I^j)\subseteq\mathcal I^{j+1}$ for all $j \geq 1,$
		we obtain $\mathcal K^m=0$, where $m$ is the nilindex of $\mathcal I.$ 
		
		The conformal derivation identity also gives $[\varphi,(\operatorname{ad}_x)_\beta]
		=(\operatorname{ad}_{\varphi(x)})_{\beta+\alpha}
		\in\mathcal A$,
		which implies that $[\varphi,\mathcal A]\subseteq\mathcal A$, and
		${\delta=[\varphi,-]}_{|\mathcal A}$ is a derivation of the associative algebra $\mathcal A$. Then $\mathcal B=\mathcal K+\delta(\mathcal K).$
		
		For $k_1,\ldots,k_m\in\mathcal K$ the iterated Leibniz rule implies
		\[
		\begin{aligned}
			0
			&=\delta^m(k_1\cdots k_m)\\
			&=\sum_{\substack{r_1+\cdots+r_m=m\\ r_1,\ldots,r_m\geq0}}
			\frac{m!}{r_1!\cdots r_m!}\,
			\delta^{r_1}(k_1)\cdots\delta^{r_m}(k_m)\\
			&\equiv m!\,\delta(k_1)\cdots\delta(k_m)
			\pmod{\mathcal K}.
		\end{aligned}
		\]
		Therefore, we obtain
		$\mathcal B^m\subseteq\mathcal K$, and hence $\mathcal B^{m^2}=0.$
		
		Finally, for every $i\in\mathcal I$ and $\beta\in\mathbb C$, we have $(\operatorname{ad}_{\varphi(i)})_\beta
		=\delta\bigl((\operatorname{ad}_i)_{\beta-\alpha}\bigr)
		\in\mathcal B.$ Hence, $(\operatorname{ad}_x)_\beta\in\mathcal B$ for all
		$x\in\mathcal J$ and $\beta\in\mathbb C$. It follows that, for all
		$x_1,\ldots,x_{m^2+1}\in\mathcal J$,
		\[
		[x_1{}_{\lambda_1}
		[x_2{}_{\lambda_2}\cdots
		[x_{m^2}{}_{\lambda_{m^2}}x_{m^2+1}]
		\cdots]]=0,
		\]
		because this polynomial vanishes under every scalar specialization
		of $\lambda_1,\ldots,\lambda_{m^2}$. Thus $\mathcal J^{m^2+1}=0$,
		proving that $\mathcal J$ is nilpotent.
	\end{proof}
	
	The following result shows that both the nilradical and the solvable radical are invariant under derivations and conformal derivations. 
	
	\begin{proposition}\label{pr:derivation-radicals}
		The nilradical $\operatorname{Nil}(\mathcal{L})$ and the solvable radical
		$\operatorname{Rad}(\mathcal{L})$ are invariant under every derivation
		and every conformal derivation of $\mathcal{L}$.
	\end{proposition}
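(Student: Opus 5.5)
For the nilradical I would apply Lemma~\ref{pr:derivation-nilpotent-ideal} directly. Let $\mathcal I=\operatorname{Nil}(\mathcal L)$ and $D\in\operatorname{CDer}(\mathcal L)$. Fix $\alpha\in\C$. The lemma says $\mathcal I+D_\alpha(\mathcal I)$ is a nilpotent ideal, so by maximality $D_\alpha(\mathcal I)\subseteq\mathcal I$. Now take $i\in\mathcal I$ and write $D_\lambda i=\sum_k \lambda^k c_k$. Choose distinct $\alpha_0,\dots,\alpha_N$ with $N$ at least the $\lambda$-degree. Vandermonde inversion then expresses each coefficient $c_k$ as a $\C$-linear combination of the values $D_{\alpha_j}i\in\mathcal I$. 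Hence $D_\lambda\mathcal I\subseteq\mathcal I[\lambda]$.

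An ordinary derivation $d\in\operatorname{Der}(\mathcal L)$ I would treat as a conformal derivation. The map $D_\lambda:=d$, constant in $\lambda$, satisfies $D_\lambda(\partial a)=\partial d(a)$. This is not $(\partial+\lambda)D_\lambda a$, so $d$ is not literally conformal. Instead I would rerun the proof of Lemma~\ref{pr:derivation-nilpotent-ideal} with $\varphi=d$ and $\alpha=0$. Every identity used there holds with $\alpha=0$: $\mathcal J$ is a $\C[\partial]$-submodule because $d$ commutes with $\partial$, and $[d,(\operatorname{ad}_x)_\beta]=(\operatorname{ad}_{d(x)})_\beta$ by the derivation rule. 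So $\mathcal I+d(\mathcal I)$ is a nilpotent ideal, and $d(\mathcal I)\subseteq\mathcal I$. The same remark could be stated once and applied for both radicals.

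For the solvable radical $\mathcal S=\operatorname{Rad}(\mathcal L)$ I need a solvable analogue of the lemma: $\mathcal S+\varphi(\mathcal S)$ is a solvable ideal, with $\varphi=D_\alpha$ or $\varphi=d$. The ideal property is the same computation as before. For solvability I would follow the classical route and reduce to the nilpotent case.

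First I would show $\varphi(\mathcal S)\subseteq \mathcal S+\operatorname{Nil}(\mathcal L)$. Here I would like to use $[\mathcal L\cdot\mathcal S]\subseteq\operatorname{Nil}(\mathcal L)$, the conformal analogue of $[\mathfrak g,\operatorname{rad}\mathfrak g]\subseteq\operatorname{nil}\mathfrak g$. I would try to derive this from the conformal Lie theorem. The flag \eqref{ca:triangular-character} for $\mathcal S$ acting on the adjoint module $\mathcal L$ shows that $\mathcal S^2$ acts nilpotently on a saturated flag, hence on $\mathcal L$ modulo torsion. Torsion is central, and conformal Engel then makes the ideal generated by $[\mathcal L\cdot\mathcal S]$ nilpotent.

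This step is the main obstacle. It needs a conformal Levi-free argument. The flag only applies to $\mathcal S$-modules, while $[\mathcal L\cdot\mathcal S]$ involves all of $\mathcal L$, and the analogue is not automatic.

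A possible alternative is an induction on derived length that avoids this analogue. Set $\mathcal S'=\mathcal S^{(k)}$, where $k$ is the last index with $\mathcal S^{(k)}\neq 0$. Then $\mathcal S'$ is an abelian ideal, so it lies in $\operatorname{Nil}(\mathcal L)$ and is $\varphi$-invariant by the first part. The invariance lets $\varphi$ descend to $\mathcal L/\mathcal S'$, with the conformal identities preserved. Induction gives $\varphi(\mathcal S)\subseteq \mathcal S+\mathcal S'=\mathcal S$ once I check that $\operatorname{Rad}(\mathcal L/\mathcal S')=\mathcal S/\mathcal S'$. This holds because an extension of solvable by solvable is solvable. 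I would adopt this induction as the primary argument, since it avoids the $[\mathcal L\cdot\mathcal S]$ analogue entirely. Coefficient extraction by Vandermonde then finishes the proof as in the nilradical case.
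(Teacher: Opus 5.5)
Your treatment of the nilradical matches the paper and is correct, including the Vandermonde extraction of the coefficients of $D_\lambda i$, which the paper leaves implicit. So does your treatment of ordinary derivations, obtained by rerunning Lemma~\ref{pr:derivation-nilpotent-ideal} with $\varphi=d$ and $\alpha=0$. The gap is in the induction you adopt for the solvable radical. You claim that $\mathcal S'=\mathcal S^{(k)}$ is $\varphi$-invariant ``by the first part''. The first part, however, only proves invariance of the \emph{maximal} nilpotent ideal $\operatorname{Nil}(\mathcal L)$.

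Applying Lemma~\ref{pr:derivation-nilpotent-ideal} to $\mathcal S'$ shows that $\mathcal S'+\varphi(\mathcal S')$ is a nilpotent ideal. This gives $\varphi(\mathcal S')\subseteq\operatorname{Nil}(\mathcal L)$, not $\varphi(\mathcal S')\subseteq\mathcal S'$. An ideal lying inside $\operatorname{Nil}(\mathcal L)$ need not be invariant: if $\mathcal L$ is abelian of rank two, $\C[\partial]e_1$ is not preserved by the derivation exchanging $e_1$ and $e_2$. Invariance of $\mathcal S^{(k)}$ does hold in the end, but only as a consequence of invariance of $\mathcal S$ via the Leibniz rule, which is what you are proving. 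Without it, $D$ does not descend to $\mathcal L/\mathcal S'$, and your induction step is circular.

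The repair is to quotient by $\operatorname{Nil}(\mathcal L)$ rather than by $\mathcal S^{(k)}$.
\begin{itemize}
\item Since $\mathcal S^{(k)}$ is an abelian ideal of $\mathcal L$, it lies in $\operatorname{Nil}(\mathcal L)$. Hence $\bigl(\mathcal S/\operatorname{Nil}(\mathcal L)\bigr)^{(k)}=0$ and the derived length drops.
\item $\operatorname{Nil}(\mathcal L)$ is invariant by your first part. So $D$ (resp.\ $d$) induces a conformal derivation (resp.\ derivation) of $\mathcal L/\operatorname{Nil}(\mathcal L)$.
\item $\operatorname{Rad}\bigl(\mathcal L/\operatorname{Nil}(\mathcal L)\bigr)=\mathcal S/\operatorname{Nil}(\mathcal L)$, by the same solvable-by-solvable argument you gave.
\end{itemize}
Apply the induction hypothesis to the induced conformal derivation itself rather than to a single evaluation $D_\alpha$. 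It then yields $D_\lambda(\mathcal S)\subseteq\mathcal S[\lambda]$ directly, with no coefficient extraction needed. This is exactly the paper's proof. Your abandoned route through $[\mathcal L\cdot\mathcal S]\subseteq\operatorname{Nil}(\mathcal L)$ is not needed.
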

	
	\begin{proof}
		Set $\mathcal{N}=\operatorname{Nil}(\mathcal{L})$ and
		$\mathcal{R}=\operatorname{Rad}(\mathcal{L})$, and let
		$D\in\operatorname{CDer}(\mathcal{L})$.
		Lemma~\ref{pr:derivation-nilpotent-ideal}, together with the maximality of $\mathcal{N}$, implies that
		$D_\alpha(\mathcal{N})\subseteq\mathcal{N}$ for every $\alpha$.
		Consequently, $D_\lambda(\mathcal{N})\subseteq\mathcal{N}[\lambda].$
		
		We prove that $D$ also preserves $\mathcal{R}$ by induction on its derived length $s$. If $s=1, 2$, then $\mathcal{R}$ is abelian, so $\mathcal{R}=\mathcal{N}$ and the assertion follows.
		Suppose now that $s > 2$. The ideal $\mathcal{R}^{(s-1)}$ is abelian and hence contained in $\mathcal{N}$. Thus, we get $(\mathcal{R}/\mathcal{N})^{(s-1)}=0.$ Moreover, we have $\operatorname{Rad}(\mathcal{L}/\mathcal{N})
		=\mathcal{R}/\mathcal{N}.$ Since $D$ preserves $\mathcal{N}$, it induces a conformal derivation of $\mathcal{L}/\mathcal{N}$. By the induction hypothesis, the induced derivation preserves $\mathcal{R}/\mathcal{N}$, whence $D_\lambda(\mathcal{R})\subseteq\mathcal{R}[\lambda].$
		
		Finally, let $d\in\operatorname{Der}(\mathcal{L})$.
		The argument of Lemma~\ref{pr:derivation-nilpotent-ideal} applies
		equally to ordinary derivations and shows that
		$\mathcal{N}+d(\mathcal{N})$ is a nilpotent ideal.
		The maximality of $\mathcal{N}$ therefore gives
		$d(\mathcal{N})\subseteq\mathcal{N}$.
		The same induction on the derived length of $\mathcal{R}$ then yields $d(\mathcal{R})\subseteq\mathcal{R}$.
	\end{proof}
	
	Let $D$ be a (conformal) derivation of $\mathcal L$.
	By Proposition~\ref{pr:derivation-radicals}, $D$ preserves
	$\mathcal N=\operatorname{Nil}(\mathcal L)$, and its restriction
	$\widetilde D=D_{|\mathcal N}$ is a (conformal) derivation
	of $\mathcal N$. The (conformal) derivation identity implies that $\widetilde D$ preserves $\mathcal N^2$.
	
	Denote by $\widetilde D_{11}$ the induced (conformal) linear map
	on $\mathcal N/\mathcal N^2$.
	The following proposition shows that nilpotence of
	$\widetilde D$ is equivalent to nilpotence of
	$\widetilde D_{11}$.
	In the conformal case, nilpotence is understood with respect
	to iterated conformal composition with independent formal
	parameters.
	
	\begin{proposition}\label{pr:nilpotence-quotient}
		A (conformal) derivation $\widetilde D$ of $\mathcal N$ is
		nilpotent if and only if $\widetilde D_{11}$ is nilpotent.
	\end{proposition}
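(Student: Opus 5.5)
The forward implication is immediate. If $\widetilde D$ is nilpotent, the induced map on the quotient satisfies $(\widetilde D_{11})_{\lambda_m}\cdots(\widetilde D_{11})_{\lambda_1}\bar a=\overline{\widetilde D_{\lambda_m}\cdots\widetilde D_{\lambda_1}a}=0$. This uses only that $\widetilde D$ preserves $\mathcal N^2$, as noted just before the statement. The same computation with $\lambda_i$ suppressed handles the ordinary case. So the work lies in the converse. There I will show that nilpotence on $\mathcal N/\mathcal N^2$ propagates down the lower central series.

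\textbf{Step 1: the quotients of the lower central series.} Consider the quotients $\mathcal N^j/\mathcal N^{j+1}$, $1\le j<c$, where $c$ is the nilindex. Each is preserved by $\widetilde D$, because the (conformal) derivation identity gives $\widetilde D_\lambda[a_\mu b]=[(\widetilde D_\lambda a)_{\lambda+\mu}b]+[a_\mu\widetilde D_\lambda b]$. Induction on $j$ then yields $\widetilde D_\lambda(\mathcal N^{j})\subseteq\mathcal N^{j}[\lambda]$.

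Next, $\mathcal N^{j+1}$ is spanned over $\C[\partial]$ by coefficients of brackets $[a_\mu b]$ with $a\in\mathcal N$ and $b\in\mathcal N^j$. Iterating the derivation identity gives a Leibniz expansion
\[
\widetilde D_{\lambda_m}\cdots\widetilde D_{\lambda_1}[a_\mu b]=\sum_{S\subseteq\{1,\dots,m\}}\bigl[(\widetilde D_{\lambda_S}a)_{\mu+\sum_{i\in S}\lambda_i}\,\widetilde D_{\lambda_{S^c}}b\bigr],
\]
where $\widetilde D_{\lambda_S}$ denotes the ordered composition over the indices in $S$. This uses that the conformal variables shift consistently, which is a routine check. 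Note also that the composition commutes with multiplication by $\partial$ up to the shift $\partial\mapsto\partial+\sum\lambda_i$, so it suffices to check vanishing on generators.

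\textbf{Step 2: the induction.} Suppose $\widetilde D$ is nilpotent of order $p_1$ modulo $\mathcal N^2$, and of order $p_j$ on $\mathcal N^j$ modulo $\mathcal N^{j+1}$. Take $m\ge p_1+p_j-1$. In each term of the expansion, either $|S|\ge p_1$, so that $\widetilde D_{\lambda_S}a\in\mathcal N^2$, or $|S^c|\ge p_j$, so that $\widetilde D_{\lambda_{S^c}}b\in\mathcal N^{j+1}$. In either case the term lies in $[\mathcal N^2\cdot\mathcal N^j]+[\mathcal N\cdot\mathcal N^{j+1}]\subseteq\mathcal N^{j+2}$. Hence $\widetilde D$ is nilpotent on $\mathcal N^{j+1}/\mathcal N^{j+2}$ with order at most $p_1+p_j-1$, and so $p_j\le j(p_1-1)+1$.

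\textbf{Step 3: assembling the bound.} Since $\widetilde D$ preserves the filtration $\mathcal N=\mathcal N^1\supset\mathcal N^2\supset\cdots\supset\mathcal N^c=0$, I compose blocks of $p_1,p_2,\dots,p_{c-1}$ factors, each block with fresh independent parameters. The first block sends $\mathcal N$ into $\mathcal N^2$, the next into $\mathcal N^3$, and so on, giving $\widetilde D_{\lambda_M}\cdots\widetilde D_{\lambda_1}=0$ for $M=\sum_j p_j$. The parameters being independent is harmless here, since each block's vanishing holds identically in its own variables.

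The main obstacle is Step 2. It needs a rigorous conformal Leibniz formula with correct shifts in the bracket variable. It also needs care that "nilpotent on a subquotient" is meaningful when the quotients have torsion. Vanishing of polynomials in $\lambda_i$ with values in $\mathcal N^{j+1}/\mathcal N^{j+2}$ should be checked coefficientwise, so torsion causes no problem. I would prove the Leibniz formula first by induction on $m$, directly from the derivation identity.
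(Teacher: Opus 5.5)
Your proof is correct and is essentially the paper's argument. The paper also passes from $\widetilde D_{11}^{\,m}=0$ to $\widetilde D^{\,m}(\mathcal N)\subseteq\mathcal N^2$ and then uses the iterated Leibniz rule to get $\widetilde D^{\,i(m-1)+1}(\mathcal N^i)\subseteq\mathcal N^{i+1}$, which is exactly your bound $p_j\le j(p_1-1)+1$; the paper cites this step from the Lie algebra literature, whereas you write out the conformal Leibniz expansion and the counting argument on $|S|$ and $|S^c|$ yourself.
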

	\begin{proof}
		The (conformal) derivation identity shows that $\mathcal N^2$
		is invariant under $\widetilde D$. Thus $\widetilde D^{\,m}=0$
		implies $\widetilde D_{11}^{\,m}=0$.
		
		Conversely, suppose that $\widetilde D_{11}^{\,m}=0$.
		Then $\widetilde D^{\,m}(\mathcal N)\subseteq\mathcal N^2$.
		Applying the iterated derivation argument in
		\cite{Snobl2010} gives
		$\widetilde D^{\,i(m-1)+1}(\mathcal N^i)\subseteq\mathcal N^{i+1},$ $i\geq1,$ where conformal compositions use independent formal parameters. Since the lower central series terminates, $\widetilde D$ is nilpotent.
	\end{proof}

	\section{Cartan subalgebras of finite solvable Lie conformal algebras.} \label{ca:section}
	
	In this section we establish the existence and inner conjugacy of Cartan subalgebras of finite solvable Lie conformal algebras,
	with an application to finite vertex algebras.

	\begin{proposition}\label{ca:weight-exactness}
		For each weight $\phi$, the functor $\mathcal M\mapsto\mathcal M^\phi$ on finite conformal $\mathcal N$-modules is exact. More explicitly, every short exact sequence
		\[
		0\longrightarrow \mathcal M'
		\xrightarrow{i}\mathcal M
		\xrightarrow{q}\mathcal M''
		\longrightarrow 0
		\]
		induces a short exact sequence
		\[
		0\longrightarrow (\mathcal M')^\phi
		\xrightarrow{i_{| (\mathcal M')^\phi}}\mathcal M^\phi
		\xrightarrow{q_{| \mathcal M^\phi}}(\mathcal M'')^\phi
		\longrightarrow 0.
		\]
		Equivalently, 
		\begin{equation}\label{ca:weight-exact-equation}
			(\mathcal M')^\phi=\mathcal M'\cap\mathcal M^\phi,
			\qquad
			q(\mathcal M^\phi)=(\mathcal M'')^\phi.
		\end{equation}
	\end{proposition}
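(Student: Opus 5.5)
The plan is to derive exactness from the generalized weight decomposition \eqref{ca:weight-decomposition} together with functoriality of $\mathcal M\mapsto\mathcal M^\phi$.

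\textbf{Step 1: functoriality.} Any morphism $f\colon\mathcal M\to\mathcal M''$ of conformal $\mathcal N$-modules satisfies $f(\mathcal M^\phi_j)\subseteq(\mathcal M'')^\phi_j$. This follows by induction on $j$ from Definition~\ref{def2.3}: if $a_\lambda m-\phi_a(\lambda)m\in\mathcal M^\phi_j[\lambda]$, then applying $f$ gives
\[
a_\lambda f(m)-\phi_a(\lambda)f(m)\in f(\mathcal M^\phi_j)[\lambda]\subseteq(\mathcal M'')^\phi_j[\lambda],
\]
and $f$ is $\C[\partial]$-linear, so it respects the spans in the definition. Consequently $i\bigl((\mathcal M')^\phi\bigr)\subseteq\mathcal M^\phi$ and $q(\mathcal M^\phi)\subseteq(\mathcal M'')^\phi$.

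\textbf{Step 2: the intersection $\mathcal M'\cap\mathcal M^\phi$.} The filtration on $\mathcal M^\phi$ is defined intrinsically by the $\mathcal N$-action, so an element of $\mathcal M'$ lying in $\mathcal M^\phi$ need not a priori lie in $(\mathcal M')^\phi$. Here I would use the decomposition \eqref{ca:weight-decomposition} for $\mathcal M'$, namely $\mathcal M'=\bigoplus_\psi(\mathcal M')^\psi$, where weights of $\mathcal M'$ not occurring for $\mathcal M$ give zero summands. By Step~1, $(\mathcal M')^\psi\subseteq\mathcal M^\psi$, and the sum $\sum_\psi\mathcal M^\psi$ inside $\mathcal M$ is direct. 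Write $m\in\mathcal M'\cap\mathcal M^\phi$ as $m=\sum_\psi m_\psi$ with $m_\psi\in(\mathcal M')^\psi\subseteq\mathcal M^\psi$. Uniqueness of components in $\mathcal M=\bigoplus\mathcal M^\psi$ forces $m_\psi=0$ for $\psi\neq\phi$, so $m\in(\mathcal M')^\phi$. The reverse inclusion is Step~1. Injectivity of $i|_{(\mathcal M')^\phi}$ is clear.

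\textbf{Step 3: surjectivity $q(\mathcal M^\phi)=(\mathcal M'')^\phi$.} Let $n\in(\mathcal M'')^\phi$ and choose $m\in\mathcal M$ with $q(m)=n$. Decompose $m=\sum_\psi m_\psi$ with $m_\psi\in\mathcal M^\psi$. Then $n=\sum_\psi q(m_\psi)$ with $q(m_\psi)\in(\mathcal M'')^\psi$, and directness of the decomposition of $\mathcal M''$ gives $n=q(m_\phi)$. Exactness in the middle, $\ker(q|_{\mathcal M^\phi})=\mathcal M'\cap\mathcal M^\phi=(\mathcal M')^\phi$, is then Step~2. This proves \eqref{ca:weight-exact-equation}.

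\textbf{Main obstacle.} The key input is the directness of the sum of distinct generalized weight submodules, which is the content of \eqref{ca:weight-decomposition} quoted from \cite{BDK,DM2012}. One must make sure this applies to all three modules, including the case where $\mathcal M$, $\mathcal M'$ or $\mathcal M''$ has torsion. Since \eqref{ca:weight-decomposition} is stated for arbitrary finite conformal $\mathcal N$-modules, I take this as available. I would also check that "weight" $\phi$ is treated uniformly, setting $\mathcal M^\phi=0$ when $\phi$ is not a weight of $\mathcal M$. This is consistent because, for a non-weight $\phi$, already $\mathcal M^\phi_0$ should vanish. That requires a small check: a nonzero $\C[\partial]$-span of vectors satisfying $a_\lambda m=\phi_a(\lambda)m$ contains a weight vector, which is immediate from the definition.
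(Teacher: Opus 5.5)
Your proposal is correct and follows the paper's proof essentially step for step. Both argue the same way: functoriality $f(\mathcal M^\phi_j)\subseteq(\mathcal M'')^\phi_j$ by induction on $j$, then $(\mathcal M')^\phi=\mathcal M'\cap\mathcal M^\phi$ by decomposing in $\mathcal M'$ and using directness in $\mathcal M$, then surjectivity of $q$ by lifting, decomposing in $\mathcal M$ and using directness in $\mathcal M''$. Your remark about setting $\mathcal M^\phi=0$ for non-weights is a harmless convention that the paper leaves implicit.
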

	\begin{proof} We first show that every homomorphism $f:\mathcal U\to\mathcal V$ of conformal $\mathcal N$-modules satisfies
		\begin{equation}\label{eq4.21}
			f(\mathcal U_j^\psi)\subseteq\mathcal V_j^\psi, \quad j\geq-1.
		\end{equation}
		The assertion is immediate for $j=-1$. Suppose it holds for $j$. If $u$ is one of the generators defining $\mathcal U_{j+1}^\psi$, then 
		$$a_\lambda f(u)-\psi_a(\lambda)f(u)=f\bigl(a_\lambda u-\psi_a(\lambda)u\bigr)\in\mathcal V_j^\psi[\lambda] \quad \mbox{for every} \quad a\in\mathcal N.$$
		Hence $f(u)\in\mathcal V_{j+1}^\psi$. Since $f$ is $\C[\partial]$-linear, the same inclusion holds for the submodule generated by these elements. Induction therefore gives $f(\mathcal U^\psi)\subseteq\mathcal V^\psi$.
		
		Identify $\mathcal M'$ with its image under $i$. Applying \eqref{eq4.21} to $i$ gives $(\mathcal M')^\phi\subseteq\mathcal M'\cap\mathcal M^\phi$. Conversely, let $m'\in\mathcal M'\cap\mathcal M^\phi$. By \eqref{ca:weight-decomposition}, decompose $m'$ in $\mathcal M'$ as
		$m'=\sum\limits_\psi m'_\psi,$ where $m'_\psi\in(\mathcal M')^\psi.$
		Each $m'_\psi$ belongs to $\mathcal M^\psi$. Since $m'\in\mathcal M^\phi$, the weight decomposition of $\mathcal M$ gives $m'_\psi=0$ for $\psi\ne\phi$. Thus $m'=m'_\phi\in(\mathcal M')^\phi$, proving
		$(\mathcal M')^\phi=\mathcal M'\cap\mathcal M^\phi.$
		
		Now applying \eqref{eq4.21} to $q$ gives $q(\mathcal M^\phi)\subseteq(\mathcal M'')^\phi$. For the reverse inclusion, take $m''\in(\mathcal M'')^\phi$ and choose $m\in\mathcal M$ with $q(m)=m''$. Write $m=\sum\limits_\psi m_\psi,\ m_\psi\in\mathcal M^\psi.$
		Then $m''=\sum\limits_\psi q(m_\psi),$ where $q(m_\psi)\in(\mathcal M'')^\psi.$ By directness of the weight decomposition of $\mathcal M''$, we have $q(m_\phi)=m''$ and $q(m_\psi)=0$ for $\psi\ne\phi$. Consequently, $q(\mathcal M^\phi)=(\mathcal M'')^\phi$.
		
		Finally, $\ker(q_{|\mathcal M^\phi})=\mathcal M'\cap\mathcal M^\phi=(\mathcal M')^\phi.$ Together with the injectivity of $i$, this proves exactness.
	\end{proof}
	
	Let $\mathcal M$ be a finite conformal $\mathcal N$-module, with action $\rho(a)_\lambda m=a_\lambda m$. For $a\in\mathcal N$, set
	$$\mathcal M_{0,a}=\bigcup_{n\geq1}\ker\rho(a)^{n},
	\quad \mbox{where} \quad \rho(a)^{n}(m):=a_{\lambda_n}\cdots a_{\lambda_1}m.$$
	Definition~\ref{def2.3} and the generalized weight decomposition~\eqref{ca:weight-decomposition} then yield
	\begin{equation}\label{eq4.3}
		\mathcal M_{0,a}=\bigoplus\limits_{\phi_a=0}\mathcal M^\phi
		\quad \mbox{for every} \quad a\in\mathcal N.
	\end{equation}
	
	\subsection{The existence of Cartan subalgebra.}
	
	Recall that a \emph{Cartan subalgebra} of $\mathcal L$ is a conformal subalgebra
	$\mathcal H$ satisfying
	$$\mathcal H \quad \mbox{is nilpotent}\quad \mbox{and} \quad N_{\mathcal L}(\mathcal H)=\mathcal H,$$
	where $N_{\mathcal L}(\mathcal H)=\{x\in\mathcal L\mid [x_\lambda\mathcal H]\subseteq\mathcal H[\lambda]\}$ is the \emph{normalizer} of $\mathcal H$ in $\mathcal L$.
	
	\begin{lemma}\label{ca:normalizer-facts}
		Let $\mathcal H$ be a Cartan subalgebra of an algebra $\mathcal L$. Then
		\begin{itemize}
			\item[(i)] $\operatorname{Center}(\mathcal L)\subseteq\mathcal H$;
			\item[(ii)] $\operatorname{Sat}_{\mathcal L}(\mathcal H)=\mathcal H$;
			\item[(iii)] if $\mathcal K$ is a nilpotent conformal subalgebra of $\mathcal L$ properly containing $\mathcal H$, then
			$\mathcal H\subsetneq N_{\mathcal K}(\mathcal H)$.
		\end{itemize}
		In particular, $\mathcal H$ is maximal among the nilpotent conformal subalgebras of $\mathcal L$.
	\end{lemma}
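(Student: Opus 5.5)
Each of (i)–(iii) follows directly from the self-normalizing property $N_{\mathcal L}(\mathcal H)=\mathcal H$, combined with the torsion facts and the Engel-type normalizer growth for nilpotent algebras. I will take the three items in order and deduce maximality at the end.

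For (i): if $z\in\operatorname{Center}(\mathcal L)$, then $[z_\lambda h]=0\in\mathcal H[\lambda]$ for all $h\in\mathcal H$. Hence $z\in N_{\mathcal L}(\mathcal H)=\mathcal H$.

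For (ii): take $x\in\operatorname{Sat}_{\mathcal L}(\mathcal H)$ and choose $0\neq p\in\C[\partial]$ with $p(\partial)x\in\mathcal H$. By sesquilinearity, for $h\in\mathcal H$,
\[
[(p(\partial)x)_\lambda h]=p(-\lambda)[x_\lambda h]\in\mathcal H[\lambda].
\]
To conclude that $[x_\lambda h]\in\mathcal H[\lambda]$, I will use the following fact: if $f(\lambda)\in\mathcal L[\lambda]$ and $p(-\lambda)f(\lambda)\in\mathcal H[\lambda]$, then every coefficient of $f$ lies in $\operatorname{Sat}_{\mathcal L}(\mathcal H)$ modulo torsion-type issues. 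Comparing coefficients from the top degree down shows only that the coefficients of $f$ lie in $\mathcal H$ after clearing by scalars. Since $p(-\lambda)$ is a polynomial in $\lambda$ with scalar coefficients, the top coefficient $c_N f_M$ lies in $\mathcal H$, so $f_M\in\mathcal H$ because $c_N\neq 0$ is a scalar. A downward induction then gives every $f_j\in\mathcal H$. This is the key point: multiplication by $p(-\lambda)$ is injective with scalar leading coefficient, so no saturation is needed. Therefore $x\in N_{\mathcal L}(\mathcal H)=\mathcal H$.

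For (iii): let $\mathcal K\supsetneq\mathcal H$ be nilpotent. I consider the lower central series $\mathcal K^j$ and set $\mathcal H_j=\mathcal H+\mathcal K^j$, which gives a decreasing chain $\mathcal H_1=\mathcal K$ ending at $\mathcal H_m=\mathcal H$. Let $j$ be maximal with $\mathcal H_j\neq\mathcal H$, so that $\mathcal H_{j+1}=\mathcal H$. Choose $x\in\mathcal K^j\setminus\mathcal H$. Then
\[
[x_\lambda h]\in[\mathcal K^j\cdot\mathcal K]\subseteq\mathcal K^{j+1}\subseteq\mathcal H,
\]
so $x\in N_{\mathcal K}(\mathcal H)\setminus\mathcal H$. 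The only care needed is that $[\mathcal K\cdot\mathcal K^j]\subseteq\mathcal K^{j+1}$ holds coefficientwise, which is exactly the definition of the lower central series.

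For the final claim: if $\mathcal K\supsetneq\mathcal H$ were a nilpotent subalgebra, then (iii) would give $\mathcal H\subsetneq N_{\mathcal K}(\mathcal H)\subseteq N_{\mathcal L}(\mathcal H)=\mathcal H$, a contradiction.

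The main obstacle is the coefficient-cancellation step in (ii). I expect it to be routine because the multiplier $p(-\lambda)$ has nonzero scalar leading coefficient, which is what makes the downward induction on coefficients go through.
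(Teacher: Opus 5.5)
Your proof is correct and follows the paper's argument essentially step for step. Parts (i) and (ii) use self-normalization, with the coefficientwise cancellation of the scalar-coefficient polynomial $p(-\lambda)$ in (ii), and part (iii) takes the last term of the lower central series of $\mathcal K$ not contained in $\mathcal H$. The only thing to clean up is the hedging sentence in (ii) about saturation ``modulo torsion-type issues'', which is unnecessary and contradicts the correct downward induction you give right after it.
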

	\begin{proof}
		Since every central element normalizes $\mathcal H$, \textup{(i)} follows from $N_{\mathcal L}(\mathcal H)=\mathcal H$. For \textup{(ii)}, suppose that $0\neq p(\partial)x\in\mathcal H$. Sesquilinearity gives $p(-\lambda)[x_\lambda\mathcal H]\subseteq\mathcal H[\lambda],$ whence $[x_\lambda\mathcal H]\subseteq\mathcal H[\lambda]$. Thus $x\in N_{\mathcal L}(\mathcal H)=\mathcal H$. For \textup{(iii)}, nilpotence provides $j\geq2$ such that $\mathcal K^j\subseteq\mathcal H$ but $\mathcal K^{j-1}\not\subseteq\mathcal H$. Then $[\mathcal H\cdot\mathcal K^{j-1}] \subseteq\mathcal K^j\subseteq\mathcal H,$ yields
		$\mathcal H\subsetneq\mathcal H+\mathcal K^{j-1}
		\subseteq N_{\mathcal K}(\mathcal H)$.
	\end{proof}
	
	Let $\mathcal H$ be a nilpotent conformal subalgebra of $\mathcal L$, and regard $\mathcal L$ as a conformal $\mathcal H$-module under the adjoint action. In the notation of Definition~\ref{def2.3} with $\phi=0$ we have $\mathcal L^0=\bigcup\limits_{j\geq-1}\mathcal L_j^0.$
	
	\begin{proposition}\label{ca:zero-weight-criterion}
		The zero-weight space $\mathcal L^0$ is a conformal subalgebra of $\mathcal L$. Moreover, $\mathcal H$ is a Cartan subalgebra of $\mathcal L$ if and only if $\mathcal L^0=\mathcal H$.
	\end{proposition}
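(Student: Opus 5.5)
The proof has three parts: $\mathcal L^0$ is a subalgebra, $\mathcal L^0=\mathcal H$ forces $\mathcal H$ to be Cartan, and conversely a Cartan subalgebra equals $\mathcal L^0$.

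\emph{Subalgebra.} I would show by induction on $j+k$ that
\[
[\mathcal L^0_j{}_\mu\mathcal L^0_k]\subseteq \mathcal L^0_{j+k+1}[\mu].
\]
First check the statement on the spanning sets of Definition~\ref{def2.3}; sesquilinearity extends it to the $\C[\partial]$-spans, since the brackets of $\partial$-multiples are polynomial multiples of the original brackets. Take $h\in\mathcal H$, $x\in\mathcal L^0_j$, $y\in\mathcal L^0_k$. The Jacobi identity gives
\[
[h_\lambda[x_\mu y]]=[[h_\lambda x]_{\lambda+\mu}y]+[x_\mu[h_\lambda y]].
\]
Here $[h_\lambda x]\in\mathcal L^0_{j-1}[\lambda]$ and $[h_\lambda y]\in\mathcal L^0_{k-1}[\lambda]$. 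By induction both terms on the right lie in $\mathcal L^0_{j+k}[\lambda,\mu]$, so $[x_\mu y]$ has coefficients in $\mathcal L^0_{j+k+1}$. Thus $\mathcal L^0$ is a conformal subalgebra. By the same argument with $x\in\mathcal H$, every $\mathcal L^0_j$ is stable under $\operatorname{ad}\mathcal H$. Nilpotence of $\mathcal H$ gives $\mathcal H\subseteq\mathcal L^0$, because $\mathcal H^i\subseteq\mathcal L^0_{m-i}$ by descending induction, where $m$ is the nilindex.

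\emph{If $\mathcal L^0=\mathcal H$, then $\mathcal H$ is Cartan.} Let $x\in N_{\mathcal L}(\mathcal H)$. Then $[h_\lambda x]\in\mathcal H[\lambda]=\mathcal L^0[\lambda]$. Since $\mathcal L^0$ is a finitely generated module with an ascending chain $\mathcal L^0_j$, the chain stabilizes, so $\mathcal L^0=\mathcal L^0_J$ for some $J$. Then $x$ is a generator of $\mathcal L^0_{J+1}$, hence $x\in\mathcal L^0=\mathcal H$. Together with nilpotence of $\mathcal H$, this shows $\mathcal H$ is Cartan.

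\emph{If $\mathcal H$ is Cartan, then $\mathcal L^0=\mathcal H$.} Suppose $\mathcal H\subsetneq\mathcal L^0$. Consider the $\mathcal H$-module $\mathcal L^0/\mathcal H$, which is nonzero. Each element has zero generalized weight, so some nonzero element $\bar x$ has $h_\lambda\bar x=0$ for all $h$: take the smallest $j$ with $\mathcal L^0_j\not\subseteq\mathcal H$ and any $x\in\mathcal L^0_j\setminus\mathcal H$. Then $[h_\lambda x]\in\mathcal L^0_{j-1}[\lambda]\subseteq\mathcal H[\lambda]$. By skew-symmetry $[x_\lambda\mathcal H]\subseteq\mathcal H[\lambda]$, so $x\in N_{\mathcal L}(\mathcal H)\setminus\mathcal H$, a contradiction.

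The step I expect to need the most care is the Jacobi bookkeeping with the shifted variable $\lambda+\mu$ in the subalgebra part, including the passage from generators to $\C[\partial]$-spans. The remaining steps are formal.
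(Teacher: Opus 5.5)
Your proposal is correct and follows essentially the same route as the paper. For the subalgebra claim you run the same Jacobi-identity induction on the filtration indices; your bound $\mathcal L^0_{j+k+1}$ is slightly weaker than the paper's $\mathcal L^0_{j+k}$ but suffices. You then use Noetherian stabilization $\mathcal L^0=\mathcal L^0_J$ to get $N_{\mathcal L}(\mathcal H)\subseteq\mathcal L^0_{J+1}=\mathcal H$, and the least $j$ with $\mathcal L^0_j\not\subseteq\mathcal H$ to produce a normalizing element outside $\mathcal H$.
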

	
	\begin{proof}
		Nilpotence of $\mathcal H$ implies $\mathcal H\subseteq\mathcal L^0$. First, argue by induction on $i+j$ we prove that
		\begin{equation}\label{ca:zero-bracket-filtration}
			[\mathcal L_i^0\cdot\mathcal L_j^0]\subseteq\mathcal L_{i+j}^0,
			\qquad i,j\geq0.
		\end{equation}
		For $x\in\mathcal L_i^0$, $y\in\mathcal L_j^0$, and $c\in\mathcal H$, the Jacobi identity gives
		$$[c_\mu[x_\lambda y]]=[[c_\mu x]_{\lambda+\mu}y]
		+[x_\lambda[c_\mu y]].$$
		If $i=j=0$, both terms vanish. Otherwise, the induction hypothesis places the right-hand side in $\mathcal L_{i+j-1}^0[\lambda,\mu]$. Thus, $[x_\lambda y]$ lies in $\mathcal L_{i+j}^0$, proving \eqref{ca:zero-bracket-filtration}. Consequently, $\mathcal L^0$ is a conformal subalgebra.
		
		Suppose that $\mathcal H$ is Cartan. If $\mathcal L^0\neq\mathcal H$, choose the least $j\geq0$ such that $\mathcal L_j^0\not\subseteq\mathcal H$. Then
		\[
		[\mathcal H_\lambda\mathcal L_j^0]
		\subseteq\mathcal L_{j-1}^0[\lambda]
		\subseteq\mathcal H[\lambda].
		\]
		Thus $\mathcal L_j^0\subseteq N_{\mathcal L}(\mathcal H),$ contradicting $N_{\mathcal L}(\mathcal H)=\mathcal H$. 
		
		Conversely, suppose that $\mathcal L^0=\mathcal H$. Then the ascending filtration stabilizes in $\mathcal H$ at some finite stage, say $\mathcal H=\mathcal L_s^0$. For $x\in N_{\mathcal L}(\mathcal H)$, we have $[\mathcal H_\lambda x]\subseteq\mathcal H[\lambda]=\mathcal L_s^0[\lambda],$ so $x\in\mathcal L_{s+1}^0\subseteq\mathcal H$. Hence $N_{\mathcal L}(\mathcal H)=\mathcal H$, and the nilpotence of $\mathcal H$ completes the proof.
	\end{proof}
	
	The following proposition shows that, for every element $a$ of a finite Lie conformal algebra $\mathcal L$, the iterated adjoint kernel $\mathcal L_{0,a}$ is a conformal subalgebra, and provides a criterion for it to be a Cartan subalgebra.
	\begin{proposition}\label{ca:single-element-criterion}
		For every $a\in\mathcal L$, the subspace $\mathcal L_{0,a}=\bigcup\limits_{n\geq1}\ker(\operatorname{ad}_a)^{n}$ is a conformal subalgebra of $\mathcal L$. It is a Cartan subalgebra if and only if it is nilpotent and contains $a$.
	\end{proposition}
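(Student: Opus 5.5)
The plan is to treat the two assertions separately: first show that $\mathcal L_{0,a}$ is a $\C[\partial]$-submodule closed under the $\lambda$-bracket, then reduce the Cartan criterion to Proposition~\ref{ca:zero-weight-criterion}.

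\textbf{Step 1: submodule.} Write $A^n_{\lambda_1,\dots,\lambda_n}(x)=[a_{\lambda_n}[\cdots[a_{\lambda_1}x]\cdots]]$. Sesquilinearity gives
\[
A^n(\partial x)=(\partial+\lambda_1+\cdots+\lambda_n)A^n(x),
\]
so each $\ker(\operatorname{ad}_a)^n$ is a $\C[\partial]$-submodule. The kernels increase with $n$, so their union is a submodule.

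\textbf{Step 2: closure under the bracket.} Take $x\in\ker(\operatorname{ad}_a)^p$ and $y\in\ker(\operatorname{ad}_a)^q$. Iterating the Jacobi identity $[a_\mu[x_\lambda y]]=[[a_\mu x]_{\lambda+\mu}y]+[x_\lambda[a_\mu y]]$ gives a Leibniz-type expansion:
\[
A^n_{\lambda_1,\dots,\lambda_n}[x_\lambda y]
\]
is a sum over subsets $S\subseteq\{1,\dots,n\}$ of terms
\[
\bigl[(A^{|S|}_{\lambda_S}x)_{\lambda+\sum_S\lambda_i}\,A^{n-|S|}_{\lambda_{S^c}}y\bigr],
\]
where each operator acts with its parameters taken in the inherited order. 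For $n\ge p+q-1$, every term has $|S|\ge p$ or $|S^c|\ge q$, so it vanishes. This bookkeeping with independent parameters and shifted bracket variables is the only delicate point. I would handle it by induction on $n$ rather than writing the full subset formula.

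\textbf{Step 3: the Cartan criterion.} The forward direction has two parts.
\begin{itemize}
\item[(a)] If $\mathcal L_{0,a}$ is Cartan, it is nilpotent by definition.
\item[(b)] To see $a\in\mathcal L_{0,a}$, note that $a$ normalizes $\mathcal L_{0,a}$. Indeed, for $x\in\ker(\operatorname{ad}_a)^n$, every coefficient of $[a_\lambda x]$ lies in $\ker(\operatorname{ad}_a)^{n-1}$. So $a\in N_{\mathcal L}(\mathcal L_{0,a})=\mathcal L_{0,a}$.
\end{itemize}

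For the converse, set $\mathcal H=\mathcal L_{0,a}$, assumed nilpotent and containing $a$. By Proposition~\ref{ca:zero-weight-criterion} it suffices to show $\mathcal L^0=\mathcal H$, where $\mathcal L^0$ is the zero generalized weight space for the adjoint $\mathcal H$-action.
\begin{itemize}
\item[(c)] $\mathcal H\subseteq\mathcal L^0$ by nilpotence of $\mathcal H$.
\item[(d)] For the reverse inclusion, show inductively that $\mathcal L^0_j\subseteq\ker(\operatorname{ad}_a)^{j+1}$. The generators of $\mathcal L^0_{j+1}$ satisfy $[c_\lambda m]\in\mathcal L^0_j[\lambda]$ for all $c\in\mathcal H$. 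Taking $c=a\in\mathcal H$ gives $[a_\lambda m]\in\ker(\operatorname{ad}_a)^{j+1}[\lambda]$, hence $m\in\ker(\operatorname{ad}_a)^{j+2}$.
\item[(e)] Step 1 says each $\ker(\operatorname{ad}_a)^{n}$ is a submodule, so the $\C[\partial]$-span in Definition~\ref{def2.3} stays inside. Thus $\mathcal L^0\subseteq\mathcal H$.
\end{itemize}

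I expect Step 2, the iterated Jacobi expansion with shifting parameters, to be the main technical obstacle.
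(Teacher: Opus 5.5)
Your proposal is correct. Steps~1--2 and the forward direction (a)--(b) are the paper's argument. The paper writes out the subset expansion of $(\operatorname{ad}_a)^m([x_\lambda y])$ with $m=p+q-1$ directly, where you would induct on $n$. It also leaves the $\C[\partial]$-closure of your Step~1 implicit.

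The converse is where you take a different route. The paper argues directly with the normalizer. If $x\in N_{\mathcal L}(\mathcal K)$, skew-symmetry gives $[a_\mu x]\in\mathcal K[\mu]$. Since $a\in\mathcal K$ and $\mathcal K^{n+1}=0$, any $n$ further $a$-actions annihilate the coefficients of $[a_\mu x]$. Hence $x\in\ker(\operatorname{ad}_a)^{n+1}\subseteq\mathcal K$. You instead reduce to Proposition~\ref{ca:zero-weight-criterion} by showing $\mathcal L^0_j\subseteq\ker(\operatorname{ad}_a)^{j+1}$.

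The paper's version is shorter and gives an explicit bound on the exponent. It also avoids the converse half of Proposition~\ref{ca:zero-weight-criterion}, which relies on the Noetherian stabilization $\mathcal H=\mathcal L^0_s$. Your version proves a bit more along the way. The inclusion $\mathcal L^0\subseteq\mathcal L_{0,a}$ uses only $a\in\mathcal H$, not nilpotence. So you obtain the identification $\mathcal L^0=\mathcal L_{0,a}$: $\mathcal L_{0,a}$ equals the generalized zero-weight space of its own adjoint action.
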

	
	\begin{proof} Set $\mathcal K=\mathcal L_{0,a}$. Let $x,y\in\mathcal K$, and choose $p,q\geq1$ such that $(ad_a)^p(x)=0$ and $(ad_a)^q(y)=0.$ Iterating the Jacobi  identity, we obtain
		$$(ad_a)^m([x_\lambda y])=\sum_{S\subseteq\{1,\ldots,m\}}
		\left[(a_{\nu_S}x)_{\lambda+\sum\limits_{i\in S}\nu_i}
		(a_{\nu_{S^c}}y)\right],$$
		where, for $S=\{i_1<\cdots<i_s\}$ we use notations $a_{\nu_S}:=a_{\nu_{i_s}}\cdots a_{\nu_{i_1}},$ $a_{\nu_\varnothing}:=\operatorname{id},$ and $S^c$ denotes the complement of $S$ in $\{1,\ldots,m\}$. 
		
		Taking $m=p+q-1$, each summand vanishes, since either $|S|\geq p$ or $|S^c|\geq q$. Hence, $[x_\lambda y]\in\mathcal K[\lambda]$ and we obtain that $\mathcal K$ is a conformal subalgebra.
		
		Next we observe that $a\in N_{\mathcal L}(\mathcal K)$. Indeed, if $x\in\mathcal K$ is annihilated by $p$ successive $a$-actions, then $(ad_a)^p([a_\mu x])=0,$ because every iterated $a$-action of length at least $p$ annihilates $x$. Comparing coefficients of $\mu$ shows that $[a_\mu x]\in\mathcal K[\mu]$. 
		Consequently, if $\mathcal K$ is Cartan, then
		$a\in N_{\mathcal L}(\mathcal K)=\mathcal K$, and $\mathcal K$ is nilpotent.
		
		Conversely, suppose that $a\in\mathcal K$ and $\mathcal K$ is nilpotent with nilindex $n+1$. For $x\in N_{\mathcal L}(\mathcal K)$, skew-symmetry gives $[a_\mu x]\in\mathcal K[\mu].$
		Since $a\in\mathcal K$, applying $n$ further $a$-actions yields
		$(ad_a)^n([a_\mu x])=0$ and hence, $x\in\mathcal K$. Thus, $N_{\mathcal L}(\mathcal K)\subseteq\mathcal K.$ The reverse inclusion follows from the fact that $\mathcal K$ is a conformal subalgebra. Consequently, $N_{\mathcal L}(\mathcal K)=\mathcal K.$ Together with the assumed nilpotence of $\mathcal K$, this proves that $\mathcal K$ is a Cartan subalgebra of $\mathcal L$.
	\end{proof}
	
	Let $\mathcal H$ be a Cartan subalgebra of $\mathcal L$, and regard
	$\mathcal L$ as an $\mathcal H$-module under the adjoint action.
	Fix a finite-dimensional subspace $E\subseteq\mathcal H$
	such that $\C[\partial]E=\mathcal H$, and define 
	$$\mathcal{U}_E =E\setminus \bigcup_{0\ne\phi\in\operatorname{W}(\mathcal L)} \ker(\phi_{|E}). $$
	
	\begin{proposition}
		The set $\mathcal{U}_E$ is nonempty, and $\mathcal L_{0,a}=\mathcal H$ for every $a\in\mathcal{U}_E$.
	\end{proposition}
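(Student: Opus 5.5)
Nonemptiness of $\mathcal U_E$ and the equality $\mathcal L_{0,a}=\mathcal H$ will be handled separately.

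\emph{Nonemptiness.} Regard $\mathcal L$ as a finite conformal $\mathcal H$-module under the adjoint action. By \eqref{ca:weight-decomposition}, the set $\operatorname{W}(\mathcal L)$ is finite. For each weight $\phi$, the restriction $\phi_{|E}\colon E\to\C[\lambda]$ is $\C$-linear, so $\ker(\phi_{|E})$ is a subspace of $E$. I claim it is proper whenever $\phi\neq0$. Since $\phi$ comes from a conformal action, $\phi_{\partial a}(\lambda)=-\lambda\phi_a(\lambda)$. Hence if $\phi$ vanished on $E$ it would vanish on $\C[\partial]E=\mathcal H$, that is, $\phi=0$. Thus $\mathcal U_E$ is $E$ minus a finite union of proper subspaces. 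Because $\C$ is infinite, such a union never covers the finite-dimensional space $E$, so $\mathcal U_E\neq\emptyset$.

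\emph{Equality.} Fix $a\in\mathcal U_E$. By \eqref{eq4.3} applied to the $\mathcal H$-module $\mathcal L$, we have $\mathcal L_{0,a}=\bigoplus_{\phi_a=0}\mathcal L^\phi$, where the kernel in \eqref{eq4.3} is that of the restricted action $\rho(a)=(\operatorname{ad}_a)$. This agrees with the definition of $\mathcal L_{0,a}$ in Proposition~\ref{ca:single-element-criterion}. Since $a\in E$ avoids $\ker(\phi_{|E})$ for every nonzero weight $\phi$, the only weight with $\phi_a=0$ is $\phi=0$. Therefore $\mathcal L_{0,a}=\mathcal L^0$. Finally, $\mathcal H$ is Cartan, so Proposition~\ref{ca:zero-weight-criterion} gives $\mathcal L^0=\mathcal H$, and hence $\mathcal L_{0,a}=\mathcal H$.

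\emph{Main obstacle.} The delicate step is the identity \eqref{eq4.3} itself: $\bigcup_n\ker\rho(a)^n$ equals the sum of the generalized weight spaces with $\phi_a=0$. I take it as given, as stated before the subsection. Should it need justification, the argument would go as follows. On each $\mathcal L^\phi$ one has $(\rho(a)-\phi_a)$ nilpotent in iterated form by Definition~\ref{def2.3}. So if $\phi_a=0$, then $\rho(a)$ is nilpotent on $\mathcal L^\phi$. If $\phi_a\neq0$, then $\rho(a)$ is injective in the iterated sense, because the leading term $\phi_a(\lambda_n)\cdots\phi_a(\lambda_1)$ acts triangularly and nonzero along the filtration $\mathcal L^\phi_j$, and a nonzero polynomial times a nonzero element is nonzero on the successive quotients. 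One must handle torsion in those quotients with some care. There the degree comparison in the independent variables $\lambda_i$ still shows that the leading coefficient survives.
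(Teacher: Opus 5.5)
Your proof is correct and follows the paper's argument essentially verbatim. You show that $\ker(\phi_{|E})$ is proper for $\phi\neq0$ via $\phi_{f(\partial)h}(\lambda)=f(-\lambda)\phi_h(\lambda)$ and $\C[\partial]E=\mathcal H$, note that $E$ is not covered by finitely many proper subspaces, and then combine \eqref{eq4.3} with Proposition~\ref{ca:zero-weight-criterion} to get $\mathcal L_{0,a}=\mathcal L^0=\mathcal H$. Your optional justification of \eqref{eq4.3}, which the paper takes for granted, is also sound in the form you actually use it: $\rho(a)$ lowers the filtration $\mathcal L^\phi_j$ when $\phi_a=0$, and the factor $\prod_k\phi_a(\lambda_k)$ acts injectively on the quotients $\mathcal L^\phi_j/\mathcal L^\phi_{j-1}$ when $\phi_a\neq0$.
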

	\begin{proof} Since $\C[\partial]E=\mathcal H$ and
		$\phi_{f(\partial)h}(\lambda)=f(-\lambda)\phi_h(\lambda)$,
		every nonzero weight $\phi$ has nonzero restriction to $E$.
		Thus each $\ker(\phi_{|E})$ is a proper subspace of $E$.
		The finiteness of $\operatorname{W}(\mathcal L)$ and the fact that $E$ cannot be covered by finitely many proper linear subspaces imply that $\mathcal{U}_E$ is nonempty.
		
		For $a\in\mathcal{U}_E$, we have $\phi_a=0$ if and only if $\phi=0$. Hence \eqref{eq4.3} and Proposition~\ref{ca:zero-weight-criterion} give $\mathcal L_{0,a}=\bigoplus\limits_{\phi_a=0}\mathcal L^\phi=\mathcal L^0=\mathcal H.$
	\end{proof}
	
	The following theorem, one of the main results of this paper, extends the Cartan existence result of D'Andrea and Marchei \cite{DM2012} to finite solvable {\it LCA}s.
	
	\begin{theorem}\label{ca:existence}
		Every finite solvable {\it LCA} admits a Cartan subalgebra.
	\end{theorem}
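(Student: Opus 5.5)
The natural route is the classical "regular element" argument. Proposition~\ref{ca:single-element-criterion} tells us that $\mathcal L_{0,a}$ is a Cartan subalgebra as soon as it is nilpotent, since it always contains $a$. So I will look for an element $a\in\mathcal R$ for which $\mathcal L_{0,a}$ ($\mathcal L=\mathcal R$) is minimal, and show that minimality forces nilpotence.

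Concretely, I would choose $a$ such that $\mathcal K=\mathcal R_{0,a}$ is minimal with respect to the pair $(\operatorname{rank}\mathcal K,\ \dim_{\C}\operatorname{Tor}$-type invariant$)$. A cleaner choice is to minimize $\operatorname{rank}(\mathcal R_{0,a})$ first, and then, among those, take $\mathcal K$ minimal under inclusion. The inclusion-minimality step is legitimate because each $\mathcal R_{0,a}$ is saturated: a sesquilinearity argument as in Lemma~\ref{ca:normalizer-facts}(ii) shows that $p(\partial)x\in\mathcal R_{0,a}$ implies $x\in\mathcal R_{0,a}$. Hence a descending chain of such subalgebras of equal rank must stabilize.

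Suppose $\mathcal K$ is not nilpotent. By the conformal Engel theorem of D'Andrea--Kac there is $b\in\mathcal K$ with $\operatorname{ad}_b$ not nilpotent on $\mathcal K$. I then consider the elements $a+tb$ for $t\in\C$. Since $\mathcal K$ is an $\operatorname{ad}_a$-stable subalgebra containing $a$ and $b$, I would form the quotient module $\mathcal R/\mathcal K$. This quotient is a conformal $\mathcal K$-module on which $\operatorname{ad}_a$ acts with no generalized zero part. To justify that, I apply the generalized weight decomposition to the finite conformal module of the nilpotent subalgebra $\C[\partial]a$, which is abelian, together with exactness, Proposition~\ref{ca:weight-exactness}. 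Under these conditions the subspace $\mathcal R_{0,a}\cap$ (complement) vanishes, that is, $(\mathcal R/\mathcal K)_{0,a}=0$.

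The key semicontinuity step is to express the dimensions through characteristic data of $\operatorname{ad}_{a+tb}$. Concretely, I would use the weights $\phi_i$ of a Lie flag \eqref{ca:triangular-character} for $\mathcal R$. These weights are additive and vanish on $\mathcal R^2$, so $\phi_i(a+tb)=\phi_i(a)+t\phi_i(b)$. The rank of $\mathcal R_{0,x}$ equals the number of indices $i$ with $\phi_i(x)=0$ (modulo torsion, which is central). For generic $t$, the set of indices with $\phi_i(a+tb)=0$ is contained in the set with $\phi_i(a)=0$, and hence in the set for $\mathcal K$. Minimality of rank then forces $\phi_i(b)=0$ for every $i$ with $\phi_i(a)=0$, i.e.\ on all flag quotients of $\mathcal K$. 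That makes $\operatorname{ad}_b$ nilpotent on the free part of $\mathcal K$ modulo the flag, and on the central torsion it acts by zero anyway. This contradicts the choice of $b$, so $\mathcal K$ is nilpotent and Proposition~\ref{ca:single-element-criterion} finishes the proof.

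The main obstacle is exactly this last step. One has to make "rank of $\mathcal R_{0,x}$ = number of $i$ with $\phi_i(x)=0$" rigorous. That requires a careful treatment of torsion and of the fact that $\mathcal R_{0,x}$ is defined via iterated $\lambda$-actions rather than by a single operator. It also requires handling saturation so that the flag of $\mathcal K$ is the restriction of the flag of $\mathcal R$. I would first prove the identity for the torsion-free quotient $\mathcal R/\operatorname{Tor}(\mathcal R)$, where the flag is saturated, and then lift using $\operatorname{Tor}\subseteq\operatorname{Center}$.
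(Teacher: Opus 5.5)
\textbf{There is a genuine gap.} Your argument rests on the claim that $\mathcal R_{0,a}$ always contains $a$, so that nilpotence alone would make it Cartan. That is false for Lie conformal algebras: $[a_\lambda a]$ need not vanish, which is why Proposition~\ref{ca:single-element-criterion} lists $a\in\mathcal R_{0,a}$ as a separate hypothesis.

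Here is a concrete counterexample. Take $\mathcal R=\C[\partial]a\oplus\C[\partial]b$ with
\[
[a_\lambda a]=(\partial+2\lambda)b,\qquad [a_\lambda b]=b,\qquad [b_\lambda b]=0 .
\]
One checks directly that this is a solvable LCA, with $\mathcal R^{(2)}=\C[\partial]b$ abelian. For $x=p(\partial)a+q(\partial)b$ we have $a_\lambda x=\bigl(p(\partial+\lambda)(\partial+2\lambda)+q(\partial+\lambda)\bigr)b$. Every further $a$-action only shifts $\partial$, so $\mathcal R_{0,a}=0$.

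Thus $a$ minimizes $\operatorname{rank}\mathcal R_{0,a}$, and the minimizer $\mathcal K=0$ is nilpotent. Your argument would stop there, yet $0$ is not Cartan, since $N_{\mathcal R}(0)=\mathcal R$.

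The same example refutes the identity you flagged as the main obstacle; it is not merely hard to prove. The flag $0\subset\C[\partial]b\subset\mathcal R$ has $\phi_1(a)=1$ and $\phi_2=0$, so your formula predicts $\operatorname{rank}\mathcal R_{0,a}=1$. In general one only has $\operatorname{rank}\mathcal R_{0,a}\le\#\{i\mid\phi_i(a)=0\}$, because $\mathcal K_i=\mathcal K_{i-1}$ whenever $\phi_i(a)\neq0$. Equality is recovered when $\langle a\rangle$ is nilpotent. Note also that $\C[\partial]a$ is not a subalgebra here, so the weight decomposition you invoke for it is unavailable.

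\textbf{The missing ingredient} is the one the paper uses: Corollary~4.1 of D'Andrea--Marchei. It replaces any $a_0$ by some $a\in\langle a_0\rangle$ with $a-a_0\in\langle a_0\rangle^2$ and $\langle a\rangle$ nilpotent, which guarantees $a\in\mathcal R_{0,a}$. Since the characters $\phi_i$ of \eqref{ca:triangular-character} vanish on $\mathcal R^2$, this correction preserves $\phi_i(a)=\phi_i(a_0)$.

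Choose $a_0$ generic, meaning every nonzero $\phi_i$ is nonzero at $a_0$. Then no minimality or Engel argument is needed. Set $\mathcal H=\mathcal R_{0,a}$ and $\mathcal H_i=\mathcal H\cap\mathcal M_i$. One gets $\mathcal H_i=\mathcal H_{i-1}$ when $\phi_i\neq0$, and $[\mathcal H\cdot\mathcal H_i]\subseteq\mathcal H_{i-1}$ when $\phi_i=0$. Hence $\mathcal H^{r+1}=0$, and $\mathcal H$ is Cartan because $a\in\mathcal H$.

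In the example above, the correction produces $a-\partial b$, which satisfies $[(a-\partial b)_\lambda(a-\partial b)]=0$. The Cartan subalgebra is then $\C[\partial](a-\partial b)$.

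Your perturbation scheme cannot be rescued simply by restricting to elements with $a\in\mathcal R_{0,a}$, because $a+tb$ leaves that class. You would have to re-apply the correction at every step, and at that point the minimization is superfluous. Your reduction to $\mathcal R/\operatorname{Tor}(\mathcal R)$ via centrality of torsion is fine and matches the paper.
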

	
	\begin{proof}
		Suppose first that $\mathcal{R}$ is free of rank $r$. Let
		$$0=\mathcal{M}_0\subset \mathcal{M}_1\subset\cdots\subset \mathcal{M}_r=\mathcal{R}$$
		be an adjoint ideal flag as in \eqref{ca:triangular-character}, and choose a finite-dimensional subspace $E \subseteq \mathcal{R}$ such that $\C[\partial]E=\mathcal{R}$.
		For every nonzero $\phi_i$, ${\phi_i}_{|E}\ne0$ and $\ker({\phi_i}_{|E})$ is a proper linear subspace of $E$. Since $E$ cannot be covered by finitely many $\ker({\phi_i}_{|E})$, we may choose $a_0\in \mathcal{U}_E$ such that $\phi_i(a_0)\ne0$. 
		Let $\langle a_0\rangle$ denote the conformal subalgebra generated by $a_0$ of $\mathcal{U}_E$.
		
		Since $\mathcal{R}$ is finite over the Noetherian ring $\C[\partial]$, every submodule of $\mathcal{R}$ is finitely generated; hence $\langle a_0\rangle$ is finite. Moreover, it is solvable as a conformal subalgebra of the solvable algebra $\mathcal{R}$. By
		\cite[Corollary~4.1]{DM2012}, there exists $a\in\langle a_0\rangle$ such that $a-a_0\in\langle a_0\rangle^2$ and $\langle a\rangle$ is nilpotent. Since $\phi_i(\mathcal{R}^2)=0$, it gives that $\phi_i(a)=\phi_i(a_0) \neq 0$ for every $i$. Set $\mathcal H=\mathcal{R}_{0,a}$ and $\mathcal H_i=\mathcal H\cap \mathcal{M}_i, \ 0\leq i\leq r.$
		Since $\langle a\rangle$ is nilpotent, $a\in \mathcal H.$ By Proposition~\ref{ca:single-element-criterion}, it remains to prove that $\mathcal H$ is nilpotent.

		Let fix $i$, $1\leq i\leq r$. Then we distinguish two cases.
		
		Assume first that $\phi_i\ne0$. Let $x\in \mathcal H_i$. Write its image in
		$\mathcal{M}_i/\mathcal{M}_{i-1}=\C[\partial]\bar m_i$ as $\bar x=f(\partial)\bar m_i$ for some $f\in \C[\partial]$. Since $x\in \mathcal H=\mathcal{R}_{0,a}$, there exists $N$ such that $(\operatorname{ad}_{a})^{N}(x)=0$. 
		Using $a_\lambda\bar m_i=\phi_i(a)\bar m_i,$
		we obtain
		$$0=a_{\lambda_N}\cdots a_{\lambda_1}\bar x=\left(\prod_{k=1}^N\phi_i(a)(\lambda_k)\right)f(\partial+\lambda_1+\cdots+\lambda_N)\bar m_i.$$
		Since $\phi_i(a)\ne0$ and $\C[\partial][\lambda_1,\ldots,\lambda_N]$ is an integral domain and $\C[\partial]\bar m_i$ is free of rank one, the displayed equality implies $f=0.$ Thus $\bar x=0$, so $x\in \mathcal{M}_{i-1}$. Hence $\mathcal H_i\subseteq \mathcal H_{i-1}.$ The reverse inclusion is immediate, and therefore $\mathcal H_i=\mathcal H_{i-1}$ for $\phi_i\ne0.$ Taking into account that 
		$\mathcal{M}_{i-1}$ is an ideal, we deduce
		$$[\mathcal H\cdot \mathcal H_i]=[\mathcal H\cdot \mathcal H_{i-1}]\subseteq \mathcal H\cap \mathcal{M}_{i-1}=\mathcal H_{i-1}.$$
		
		Assume now that $\phi_i=0$. Since the action of $\mathcal{R}$ on
		$\mathcal{M}_i/\mathcal{M}_{i-1}$ is given by the character $\phi_i$ and by Proposition \ref{ca:single-element-criterion}, we have $[\mathcal{R}\cdot \mathcal{M}_i]\subseteq \mathcal{M}_{i-1}$ and $[\mathcal H\cdot \mathcal H_i]\subseteq \mathcal H.$ Therefore
		$$[\mathcal H\cdot \mathcal H_i]\subseteq[\mathcal{R}\cdot \mathcal{M}_i]\subseteq \mathcal{M}_{i-1}, \quad 
		[\mathcal H\cdot \mathcal H_i]\subseteq \mathcal H\cap \mathcal{M}_{i-1}=\mathcal H_{i-1}.$$
		Thus, $[\mathcal H\cdot \mathcal H_i]\subseteq \mathcal H_{i-1},$ $1\leq i\leq r$. Since $\mathcal H=\mathcal H_r$, repeated application of the last embedding gives $\mathcal H^{r+1}\subseteq \mathcal H_0=0.$
		Therefore $\mathcal H$ is nilpotent. 
		
		We now remove the freeness assumption. Let $\mathcal{T}=\operatorname{Tor}(\mathcal{R})$
		and let $\pi:\mathcal{R}\rightarrow \bar{\mathcal{R}}=\mathcal{R}/\mathcal{T}$ 
		be the quotient map. Then $\bar{\mathcal{R}}$ is free and solvable. Therefore, by the free case, $\bar{\mathcal{R}}$ admits a Cartan subalgebra $\bar{\mathcal H}$ (say with nilindex $s$). Setting $\mathcal H=\pi^{-1}(\bar{\mathcal H})$ we have 
		$\pi(\mathcal H^{s})\subseteq \bar{\mathcal H}^{\,s}= \bar 0,$ which leads $\mathcal H^{s}\subseteq\mathcal{T}.$ Since $\mathcal{T}\subseteq \operatorname{Center}(\mathcal{R})$,
		$\mathcal H^{s+1}=[\mathcal H^{s}\cdot \mathcal H]=0.$ Thus, $\mathcal H$ is nilpotent.
		
		Let $x\in N_{\mathcal{R}}(\mathcal H).$ Then
		$[\pi(x)_\lambda\bar{\mathcal H}]\subseteq\bar{\mathcal H}[\lambda],$ so $\pi(x)\in N_{\bar{\mathcal{R}}}(\bar{\mathcal H})=\bar{\mathcal H}.$ Hence
		$x\in\pi^{-1}(\bar{\mathcal H})=\mathcal H.$
		Thus, $N_{\mathcal{R}}(\mathcal H)\subseteq \mathcal H.$
		The reverse inclusion holds because $\mathcal H$ is a conformal
		subalgebra. Thus $\mathcal H$ is self-normalizing and therefore
		is a Cartan subalgebra of $\mathcal R$.
	\end{proof}
	
	It should be noted that the restriction to solvable algebras in Theorem \ref{ca:existence} is essential. Indeed, in the case of {\it LCA} is $\operatorname{Vir}$, there is no nontrivial  saturated submodules and consequently, this algebra does not admits a Cartan subalgebra.
	
	The exactness of generalized zero-weight submodules also gives the following behavior of Cartan subalgebras under quotients.
	
	Let $\varphi : \mathcal L_1\to\mathcal L_2$ be a surjective homomorphism of {\it LCA}s and let $\mathcal H$ be a
	Cartan subalgebra of $\mathcal L_1$. Set
	$\mathcal I=\ker \varphi$ and $\widetilde{\mathcal H}=\varphi(\mathcal H)$.
	
	\begin{proposition}\label{ca:cartan-quotient}
		The subalgebra $\widetilde{\mathcal H}$ is a Cartan subalgebra
		of $\mathcal L_2$. Moreover,
		$$\varphi^{-1}\bigl(N_{\mathcal L_2}(\widetilde{\mathcal H})\bigr) = N_{\mathcal L_1}(\mathcal H+\mathcal I)
		= \mathcal H+\mathcal I.$$
	\end{proposition}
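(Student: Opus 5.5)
The plan is to use the zero-weight criterion of Proposition~\ref{ca:zero-weight-criterion} together with the exactness of Proposition~\ref{ca:weight-exactness}. Regard $\mathcal L_1$ and $\mathcal L_2$ as conformal $\mathcal H$-modules: the first through the adjoint action, the second through $h_\lambda y=[\varphi(h)_\lambda y]$. Then $\varphi$ is a surjective homomorphism of conformal $\mathcal H$-modules with kernel $\mathcal I$, and $\mathcal H$ is nilpotent. Applying \eqref{ca:weight-exact-equation} with $\phi=0$ gives $\varphi(\mathcal L_1^0)=\mathcal L_2^0$, where the superscript refers to the generalized zero-weight space for $\mathcal H$. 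Since $\mathcal H$ is Cartan, Proposition~\ref{ca:zero-weight-criterion} gives $\mathcal L_1^0=\mathcal H$, so $\mathcal L_2^0=\widetilde{\mathcal H}$ as $\mathcal H$-modules.

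Next I will check that the $\mathcal H$-module filtration $(\mathcal L_2)^0_j$ coincides with the $\widetilde{\mathcal H}$-module filtration. This holds because the defining conditions in Definition~\ref{def2.3} only involve the operators $(\operatorname{ad}_{\varphi(h)})_\lambda$, and these exhaust the action of $\widetilde{\mathcal H}$. Hence the generalized zero-weight space of $\mathcal L_2$ for $\widetilde{\mathcal H}$ equals $\widetilde{\mathcal H}$. Moreover $\widetilde{\mathcal H}$ is nilpotent as a homomorphic image of $\mathcal H$, so Proposition~\ref{ca:zero-weight-criterion} shows that $\widetilde{\mathcal H}$ is Cartan in $\mathcal L_2$. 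I expect this identification of the two zero-weight spaces to be the only delicate point. One must make sure that exactness is applied to modules over the fixed nilpotent algebra $\mathcal H$, since Proposition~\ref{ca:weight-exactness} is stated for $\mathcal N$-modules, and then transport the conclusion to $\widetilde{\mathcal H}$.

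For the chain of equalities, self-normalization gives $N_{\mathcal L_2}(\widetilde{\mathcal H})=\widetilde{\mathcal H}$, and therefore $\varphi^{-1}(N_{\mathcal L_2}(\widetilde{\mathcal H}))=\varphi^{-1}(\varphi(\mathcal H))=\mathcal H+\mathcal I$. It remains to show $N_{\mathcal L_1}(\mathcal H+\mathcal I)=\mathcal H+\mathcal I$. The inclusion $\supseteq$ holds because $\mathcal H+\mathcal I$ is a subalgebra, as $\mathcal I$ is an ideal. For $\subseteq$, take $x$ with $[x_\lambda(\mathcal H+\mathcal I)]\subseteq(\mathcal H+\mathcal I)[\lambda]$. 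Applying $\varphi$ gives $[\varphi(x)_\lambda\widetilde{\mathcal H}]\subseteq\widetilde{\mathcal H}[\lambda]$, so $\varphi(x)\in N_{\mathcal L_2}(\widetilde{\mathcal H})=\widetilde{\mathcal H}$. Hence $x\in\varphi^{-1}(\widetilde{\mathcal H})=\mathcal H+\mathcal I$, which closes the chain.
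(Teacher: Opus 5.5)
Your proposal is correct and follows essentially the same route as the paper. Both apply exactness of the generalized zero-weight functor to $0\to\mathcal I\to\mathcal L_1\to\mathcal L_2\to0$ and then the zero-weight criterion; your explicit identification of the $\mathcal H$- and $\widetilde{\mathcal H}$-filtrations on $\mathcal L_2$ spells out a step the paper leaves implicit. For the normalizer chain, the paper first proves the left equality by a sequence of equivalences and only then uses self-normalization, whereas you use self-normalization first and show both sets equal $\mathcal H+\mathcal I$; the two arguments are equivalent.
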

	\begin{proof} Regard $\mathcal L_2$ as a conformal $\mathcal H$-module via the action $h_\lambda x=[\varphi(h)_\lambda x],$ $h\in\mathcal H,\ x\in\mathcal L_2.$
		The applications of Proposition \ref{ca:weight-exactness} at weight zero and Proposition~\ref{ca:zero-weight-criterion} to the short exact sequence of conformal $\mathcal H$-modules
		\[
		0\longrightarrow\mathcal I
		\longrightarrow\mathcal L_1
		\xrightarrow{\varphi}\mathcal L_2
		\longrightarrow 0
		\]
		imply 
		\[ \mathcal L_2^0 = \varphi(\mathcal L_1^0) = \varphi(\mathcal H)
		= \widetilde{\mathcal H}.
		\]
		
		Since $\widetilde{\mathcal H}$ is nilpotent as a homomorphic image of $\mathcal H$, by Proposition~\ref{ca:zero-weight-criterion} we obtain that
		$\widetilde{\mathcal H}$ is a Cartan subalgebra of $\mathcal L_2$.
		
		Since $\varphi^{-1}(\widetilde{\mathcal H})=\mathcal H+\mathcal I$
		and $\mathcal I$ is an ideal of $\mathcal L_1$, for every
		$x\in\mathcal L_1$ we have
		\begin{align*}
			\varphi(x)\in N_{\mathcal L_2}(\widetilde{\mathcal H})
			&\Longleftrightarrow
			[x_\lambda\mathcal H]
			\subseteq(\mathcal H+\mathcal I)[\lambda]\\
			&\Longleftrightarrow
			[x_\lambda(\mathcal H+\mathcal I)]
			\subseteq(\mathcal H+\mathcal I)[\lambda]\\
			&\Longleftrightarrow
			x\in N_{\mathcal L_1}(\mathcal H+\mathcal I).
		\end{align*}
		Consequently, $\varphi^{-1}\bigl(N_{\mathcal L_2}(\widetilde{\mathcal H})\bigr) = N_{\mathcal L_1}(\mathcal H+\mathcal I).$ Finally, the self-normalizing property of
		$\widetilde{\mathcal H}$ gives
		$N_{\mathcal L_1}(\mathcal H+\mathcal I) = \varphi^{-1}(\widetilde{\mathcal H}) = \mathcal H+\mathcal I$
		as required.
	\end{proof}
	
	For central quotients, Proposition \ref{ca:cartan-quotient}  gives a particularly useful correspondence. Let $\mathcal I\subseteq\operatorname{Center}(\mathcal L)$ be an ideal, and let $\varphi : \mathcal L\longrightarrow\mathcal L/\mathcal I$ be the quotient map.
	
	\begin{corollary}\label{ca:central-quotient}
		The map $\mathcal H\mapsto\mathcal H/\mathcal I$
		defines a bijection between the Cartan subalgebras of $\mathcal L$ and those of $\mathcal L/\mathcal I$. 
	\end{corollary}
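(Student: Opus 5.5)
We need two maps: $\mathcal H\mapsto \varphi(\mathcal H)=\mathcal H/\mathcal I$ from Cartan subalgebras of $\mathcal L$ to those of $\mathcal L/\mathcal I$, and $\bar{\mathcal H}\mapsto\varphi^{-1}(\bar{\mathcal H})$ in the reverse direction. The plan is to show that both are well defined and mutually inverse. Proposition~\ref{ca:cartan-quotient} applied to the surjection $\varphi$ already says that $\varphi(\mathcal H)$ is a Cartan subalgebra of $\mathcal L/\mathcal I$, so the forward map is well defined. Note that $\varphi(\mathcal H)=(\mathcal H+\mathcal I)/\mathcal I$. Since $\mathcal I\subseteq\operatorname{Center}(\mathcal L)\subseteq\mathcal H$ by Lemma~\ref{ca:normalizer-facts}(i), this equals $\mathcal H/\mathcal I$ and is literally a quotient of $\mathcal H$.

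For the reverse map, let $\bar{\mathcal H}$ be a Cartan subalgebra of $\mathcal L/\mathcal I$ and set $\mathcal H=\varphi^{-1}(\bar{\mathcal H})$.
\begin{enumerate}
\item[(a)] \emph{Nilpotence.} This repeats the torsion-removal step in the proof of Theorem~\ref{ca:existence}, with $\mathcal T$ replaced by $\mathcal I$. If $\bar{\mathcal H}^s=0$, then $\mathcal H^s\subseteq\mathcal I\subseteq\operatorname{Center}(\mathcal L)$, so $\mathcal H^{s+1}=0$.
\item[(b)] \emph{Self-normalizing.} If $x\in N_{\mathcal L}(\mathcal H)$, then $\varphi(x)$ normalizes $\bar{\mathcal H}$. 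Hence $\varphi(x)\in\bar{\mathcal H}$, and so $x\in\mathcal H$.
\end{enumerate}
Thus $\mathcal H$ is a Cartan subalgebra of $\mathcal L$.

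It remains to check that the two maps are inverse. First, $\varphi(\varphi^{-1}(\bar{\mathcal H}))=\bar{\mathcal H}$ because $\varphi$ is surjective. Second, $\varphi^{-1}(\varphi(\mathcal H))=\mathcal H+\mathcal I=\mathcal H$, again using $\mathcal I\subseteq\mathcal H$; this equality is also part of the formula in Proposition~\ref{ca:cartan-quotient}. Hence $\mathcal H\mapsto\mathcal H/\mathcal I$ is a bijection.

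There is no serious obstacle here. The only point needing care is the inclusion $\mathcal I\subseteq\mathcal H$, which is what makes $\mathcal H/\mathcal I$ meaningful and the map injective. It comes directly from Lemma~\ref{ca:normalizer-facts}(i), because central elements normalize every subalgebra.
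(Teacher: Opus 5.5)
Your proposal is correct and follows essentially the same argument as the paper: Lemma~\ref{ca:normalizer-facts}(i) together with Proposition~\ref{ca:cartan-quotient} handles the forward direction, and for $\varphi^{-1}(\bar{\mathcal H})$ you use the same centrality argument for nilpotence and the same passage to the quotient for self-normalization. Your explicit check that the two maps are mutually inverse, using surjectivity of $\varphi$ and $\mathcal I\subseteq\mathcal H$, fills in a step the paper leaves implicit.
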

	
	\begin{proof}
		By Lemma~\ref{ca:normalizer-facts}, every Cartan subalgebra of $\mathcal L$
		contains $\mathcal I$. Hence Proposition~\ref{ca:cartan-quotient} shows that
		its image in $\mathcal L/\mathcal I$ is Cartan. Conversely, let $\overline{\mathcal H}$ be a Cartan subalgebra of
		$\mathcal L/\mathcal I$, and set
		$\mathcal H=\varphi^{-1}(\overline{\mathcal H}).$
		If $\overline{\mathcal H}^{\,s}=0$, then
		$\mathcal H^{s+1}=[\mathcal H^{s}\cdot\mathcal H] \subseteq
		[\mathcal H\cdot\mathcal I]= 0.$
		Thus $\mathcal H$ is nilpotent.
		
		If $x\in N_{\mathcal L}(\mathcal H)$, then
		$\varphi(x)\in N_{\mathcal L/\mathcal I}(\overline{\mathcal H})
		= \overline{\mathcal H}.$ Hence, 
		$x\in \varphi^{-1}(\overline{\mathcal H})=\mathcal H.$
		Therefore, $N_{\mathcal L}(\mathcal H)=\mathcal H$
		and $\mathcal H$ is Cartan.
	\end{proof}
	
	Applying Proposition~\ref{ca:cartan-quotient} to the abelianization
	gives the following decomposition.
	
	\begin{corollary}\label{ca:cartan-derived}
		Let $\mathcal H$ be a Cartan subalgebra of $\mathcal{L}$. Then
		$\mathcal{L}=\mathcal H+\mathcal{L}^2$ and there is an exact sequence
		$$0\longrightarrow \mathcal H\cap \mathcal{L}^2\longrightarrow \mathcal H\longrightarrow \mathcal{L}/\mathcal{L}^2\longrightarrow 0.$$
	\end{corollary}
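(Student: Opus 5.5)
We apply Proposition~\ref{ca:cartan-quotient} to the quotient map $\varphi\colon\mathcal L\to\mathcal L/\mathcal L^2$ onto the abelianization. The statement holds for any finite $\mathcal L$ having a Cartan subalgebra $\mathcal H$. No solvability is needed.

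First, Proposition~\ref{ca:cartan-quotient} shows that $\widetilde{\mathcal H}=\varphi(\mathcal H)$ is a Cartan subalgebra of the abelian algebra $\mathcal L/\mathcal L^2$. Second, in an abelian LCA every element normalizes every subalgebra. Self-normalization $N_{\mathcal L/\mathcal L^2}(\widetilde{\mathcal H})=\widetilde{\mathcal H}$ therefore forces $\widetilde{\mathcal H}=\mathcal L/\mathcal L^2$. Equivalently, the displayed equality of Proposition~\ref{ca:cartan-quotient} gives
\[
\mathcal L=\varphi^{-1}\bigl(\mathcal L/\mathcal L^2\bigr)=\varphi^{-1}\bigl(N_{\mathcal L/\mathcal L^2}(\widetilde{\mathcal H})\bigr)=\mathcal H+\mathcal L^2 .
\]
This proves the first assertion.

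For the exact sequence, take the restriction $\varphi_{|\mathcal H}\colon\mathcal H\to\mathcal L/\mathcal L^2$. It is a $\C[\partial]$-module map. It is surjective because $\mathcal L=\mathcal H+\mathcal L^2$. Its kernel is $\mathcal H\cap\mathcal L^2$. The inclusion $\mathcal H\cap\mathcal L^2\hookrightarrow\mathcal H$ then completes the short exact sequence. This also shows that $\mathcal H/(\mathcal H\cap\mathcal L^2)\cong\mathcal L/\mathcal L^2$ as modules, and even as LCAs, since both sides are abelian: $\mathcal H^2\subseteq\mathcal L^2$.

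There is essentially no obstacle. The only point to check is that Proposition~\ref{ca:cartan-quotient} applies to an arbitrary surjection of finite LCAs, which it does as stated. One could also check the key step directly: if $\mathcal H+\mathcal L^2\neq\mathcal L$, then $\mathcal H+\mathcal L^2$ is an ideal, since $[\mathcal L\cdot\mathcal L]\subseteq\mathcal L^2$. The zero-weight criterion, Proposition~\ref{ca:zero-weight-criterion}, together with exactness, Proposition~\ref{ca:weight-exactness}, would then make all of $\mathcal L/\mathcal L^2$ a trivial $\mathcal H$-module. The zero-weight space of that module is the whole quotient, whereas it should equal the image of $\mathcal H$, which is a contradiction.
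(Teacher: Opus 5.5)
Your proposal is correct and follows the paper's proof essentially verbatim. You apply Proposition~\ref{ca:cartan-quotient} to the abelianization map and use self-normalization in the abelian quotient to force $\varphi(\mathcal H)=\mathcal L/\mathcal L^2$, hence $\mathcal L=\mathcal H+\mathcal L^2$; the exact sequence then comes from the surjective restriction $\varphi_{|\mathcal H}$, whose kernel is $\mathcal H\cap\mathcal L^2$.
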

	\begin{proof}
		Applying Proposition~\ref{ca:cartan-quotient} to the natural quotient map $\varphi:\mathcal{L} \rightarrow \mathcal{L}/\mathcal{L}^2,$ we obtain that $\varphi(\mathcal H)=\mathcal{L}/\mathcal{L}^2$ is the unique Cartan subalgebra of $\mathcal{L}/\mathcal{L}^2$. Since, $\varphi(\mathcal H)=(\mathcal H+\mathcal{L}^2)/\mathcal{L}^2,$ we obtain  
		$\mathcal{L}= \mathcal H+\mathcal{L}^2.$
		
		The restriction of $\varphi$ to $\mathcal H$ is therefore surjective with kernel $\ker \varphi_{|\mathcal H}=\mathcal H\cap \mathcal{L}^2.$
		Consequently, the sequence 
		$$0\longrightarrow \mathcal H\cap \mathcal{L}^2\longrightarrow \mathcal H\xrightarrow{\,\varphi_{|\mathcal H}\,}\mathcal{L}/\mathcal{L}^2\longrightarrow 0$$
		is exact.
	\end{proof}
	
	For free solvable {\it LCA}s, the triangular flag also
	determines the rank of a Cartan subalgebra.
	
	\begin{proposition}\label{ca:cartan-rank}
		Let $\mathcal{R}$ be free of rank $r$, and let
		$$0=\mathcal{M}_0\subset \mathcal{M}_1\subset\cdots\subset \mathcal{M}_r=\mathcal{R}$$
		be an adjoint ideal flag as in \eqref{ca:triangular-character}, with corresponding characters $\phi_i$. If $\mathcal H$ is a Cartan subalgebra of $\mathcal{R}$, then
		$\operatorname{rank}(\mathcal H)=\operatorname{card}\{\,i\mid\phi_i=0\,\}.$
	\end{proposition}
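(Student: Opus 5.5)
We compare the flag $\mathcal H_i=\mathcal H\cap\mathcal M_i$ with the given flag $\mathcal M_i$ and show that the rank of $\mathcal H$ grows by exactly one at each step with $\phi_i=0$ and stays constant when $\phi_i\neq0$. Since $\mathcal H$ is Cartan, Proposition~\ref{ca:zero-weight-criterion} gives $\mathcal H=\mathcal R^0$, the generalized zero-weight submodule of $\mathcal R$ regarded as a conformal $\mathcal H$-module. Each $\mathcal M_i$ is an ideal, hence an $\mathcal H$-submodule, so Proposition~\ref{ca:weight-exactness} applies to the exact sequences
\[
0\to\mathcal M_{i-1}\to\mathcal M_i\to\mathcal M_i/\mathcal M_{i-1}\to0 .
\]
It yields $(\mathcal M_i)^0=\mathcal M_i\cap\mathcal R^0=\mathcal H_i$ (using exactness also for $\mathcal M_i\subseteq\mathcal R$), together with exact sequences
\[
0\to\mathcal H_{i-1}\to\mathcal H_i\to(\mathcal M_i/\mathcal M_{i-1})^0\to0 .
\]
Rank is additive over short exact sequences, so $\operatorname{rank}\mathcal H=\sum_i\operatorname{rank}\bigl((\mathcal M_i/\mathcal M_{i-1})^0\bigr)$. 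It therefore suffices to show that the free rank-one $\mathcal H$-module $\C[\partial]\bar m_i$, on which $h_\lambda\bar m_i=\phi_i(h)\bar m_i$, has zero-weight part of rank $1$ if $\phi_i=0$ and of rank $0$ otherwise.

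The case $\phi_i=0$ is immediate: all of $\C[\partial]\bar m_i$ lies in $\mathcal M^0_0$, so the rank is one. For $\phi_i\neq0$ it suffices to show that ${\phi_i}_{|\mathcal H}\neq0$. Then $\C[\partial]\bar m_i$ is a free rank-one module with a single weight ${\phi_i}_{|\mathcal H}\ne0$, so its generalized zero-weight part vanishes. Concretely, the computation from the proof of Theorem~\ref{ca:existence} applies: if $h\in\mathcal H$ with $\phi_i(h)\ne0$, then $h_{\lambda_N}\cdots h_{\lambda_1}f(\partial)\bar m_i=\prod_k\phi_i(h)(\lambda_k)f(\partial+\sum\lambda_k)\bar m_i\ne0$ for $f\ne0$, so no nonzero element lies in $\mathcal M^0_j$ for any $j$. (One should check this inductively on $j$: $\mathcal M^0_0$ consists of elements killed by all of $\mathcal H$, so it is $0$ because $\phi_i(h)\ne0$, and the induction continues the same way.)

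The key step, and the main obstacle, is showing that ${\phi_i}_{|\mathcal H}\neq0$ whenever $\phi_i\neq0$. By Corollary~\ref{ca:cartan-derived}, $\mathcal R=\mathcal H+\mathcal R^2$, and $\phi_i$ vanishes on $\mathcal R^2$ by \eqref{ca:triangular-character}. Hence if $\phi_i$ vanished on $\mathcal H$, it would vanish on all of $\mathcal R$, a contradiction. Summing over $i$ gives $\operatorname{rank}(\mathcal H)=\operatorname{card}\{i\mid\phi_i=0\}$.
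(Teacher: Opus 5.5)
Your proposal is correct and follows the paper's proof essentially step for step: you identify $\mathcal H=\mathcal R^0$, apply weight exactness to $0\to\mathcal M_{i-1}\to\mathcal M_i\to\mathcal M_i/\mathcal M_{i-1}\to0$ to get rank increments of one or zero, and use $\mathcal R=\mathcal H+\mathcal R^2$ (Corollary~\ref{ca:cartan-derived}) together with $\phi_i(\mathcal R^2)=0$ to conclude that ${\phi_i}_{|\mathcal H}=0$ if and only if $\phi_i=0$. You also check explicitly that the generalized zero-weight part of $\C[\partial]\bar m_i$ vanishes when ${\phi_i}_{|\mathcal H}\neq0$, which fills in a step the paper only asserts.
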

	\begin{proof} Regard $\mathcal R$ as a conformal $\mathcal H$-module under the adjoint action, and set $\mathcal H_i=\mathcal H\cap\mathcal M_i,$ $0\leq i\leq r.$ Since $\mathcal H=\mathcal R^0$, we have
		$\mathcal M_i^0=\mathcal H_i$.
		Applying \eqref{ca:weight-exact-equation} to the short exact
		sequence of conformal $\mathcal H$-modules
		\[
		0\longrightarrow\mathcal M_{i-1}
		\longrightarrow\mathcal M_i
		\longrightarrow\mathcal M_i/\mathcal M_{i-1}
		\longrightarrow0
		\]
		therefore yields
		\[
		0\longrightarrow\mathcal H_{i-1}
		\longrightarrow\mathcal H_i
		\longrightarrow(\mathcal M_i/\mathcal M_{i-1})^0
		\longrightarrow0,
		\]
		where the superscript $0$ denotes the generalized zero-weight
		submodule for the restricted action of $\mathcal H$.
		
		The quotient $\mathcal M_i/\mathcal M_{i-1}
		=\C[\partial]\bar m_i$ carries the action $a_\lambda\bar m_i=\phi_i(a)(\lambda)\bar m_i,$ $a\in\mathcal H.$
		Consequently,
		\[
		(\mathcal M_i/\mathcal M_{i-1})^0
		=
		\begin{cases}
			\C[\partial]\bar m_i, & {\phi_i}_{| \mathcal H}=0,\\
			0, & {\phi_i}_{| \mathcal H}\ne0.
		\end{cases}
		\]
		Taking ranks in the preceding exact sequence gives
		\[
		\operatorname{rank}\mathcal H_i
		-\operatorname{rank}\mathcal H_{i-1}
		=\begin{cases}
			1, & {\phi_i}_{ | \mathcal H}=0,\\
			0, & {\phi_i}_{| \mathcal H}\ne 0.
		\end{cases}
		\]
		Since $\mathcal H_0=0$ and $\mathcal H_r=\mathcal H$,
		summing over $i$ yields $\operatorname{rank}\mathcal H
		=\operatorname{card}\{ \ i \ | \ {\phi_i}_{| \mathcal H}=0\,\}.$
		
		Finally, Corollary~\ref{ca:cartan-derived} gives
		$\mathcal R=\mathcal H+\mathcal R^2$, while
		\eqref{ca:triangular-character} implies
		$\phi_i(\mathcal R^2)=0$ for every $i$.
		Thus ${\phi_i}_{| \mathcal H}=0$ if and only if $\phi_i=0$,
		and hence $\operatorname{rank}\mathcal H = \operatorname{card}
		\{ \ i \ | \ \phi_i=0\,\}.$
	\end{proof}
	
	Recall, that for a finite-dimensional Lie algebra  $\mathfrak g$ over $\C$ the associated
	\emph{current LCA} is
	$\operatorname{Cur}(\mathfrak g)=\C[\partial]\otimes\mathfrak g,$
	with $\lambda$-bracket determined by
	$[x_\lambda y]=[x,y]_{\mathfrak g},~ x,y\in\mathfrak g,$
	and extended to $\operatorname{Cur}(\mathfrak g)$ by sesquilinearity.
	
	For a submodule $\mathcal M\subseteq\operatorname{Cur}(\mathfrak g)$ we set $\operatorname{coeff}(\mathcal M)=\operatorname{Span}_{\mathbb{C}}\{ \ x_i \ | \ x=\sum x_i\partial^i \in \mathcal{M} \}.$ The current bracket and sesquilinearity give  
	\begin{equation}\label{eq4.5}
		\operatorname{coeff}([\mathcal U\cdot\mathcal V])
		=[\operatorname{coeff}(\mathcal U),\operatorname{coeff}(\mathcal V)]_{\mathfrak g}   
	\end{equation}
	for any submodules $\mathcal U,\mathcal V\subseteq\operatorname{Cur}(\mathfrak g).$
	
	The existence statement of Theorem~\ref{ca:existence} does not
	extend to finite semisimple {\it LCA}s. By \cite[Theorem~7.1]{DK},
	these are finite direct sums of
	\[
	\operatorname{Vir},\qquad
	\operatorname{Cur}(\mathfrak s),\qquad
	\operatorname{Vir}\ltimes\operatorname{Cur}(\mathfrak g),
	\]
	where $\mathfrak s$ is finite-dimensional and simple, and
	$\mathfrak g$ is finite-dimensional and semisimple.
	Since $\operatorname{Vir}$ has no nonzero nilpotent conformal
	subalgebra, it admits no Cartan subalgebra.

	The standard semidirect product
	$\mathcal L=\operatorname{Vir}\ltimes\operatorname{Cur}(\mathfrak g)$
	also admits no Cartan subalgebra. Indeed, if $\mathcal H$ were
	such a subalgebra, its image under the projection onto
	$\operatorname{Vir}$ would be nilpotent and hence zero. Thus
	$\mathcal H\subseteq\operatorname{Cur}(\mathfrak g)$.
	By \eqref{eq4.5}, the Lie algebra
	$\mathfrak h=\operatorname{coeff}(\mathcal H)$ is nilpotent,
	and so is $\operatorname{Cur}(\mathfrak h)$.
	Since $\mathcal H\subseteq\operatorname{Cur}(\mathfrak h)$,
	self-normalization and Lemma~\ref{ca:normalizer-facts} imply
	$\mathcal H=\operatorname{Cur}(\mathfrak h)$. However, the standard Virasoro action leads to the contradiction to the self-normalizing property of $\mathcal H$.
	
	Among the semisimple building blocks considered above, the current {\it LCA}s are precisely those admitting Cartan subalgebras. Below we establish that their Cartan theory is completely determined by the classical Cartan theory of the underlying finite-dimensional Lie algebra \cite{Humphreys}.
	
	\begin{proposition}\label{ca:current-correspondence}
		Let $\mathfrak g$ be a finite-dimensional complex Lie algebra. A conformal subalgebra $\mathcal H$ of $\operatorname{Cur}(\mathfrak g)$ is Cartan if and only if $\mathcal H=\operatorname{Cur}(\mathfrak h)$ for some Cartan subalgebra $\mathfrak h$ of $\mathfrak g$.
	\end{proposition}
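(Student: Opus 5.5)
Both directions will go through the coefficient map $\operatorname{coeff}$, identity \eqref{eq4.5}, and the fact that $\operatorname{Cur}(\mathfrak k)$ is a saturated conformal subalgebra for every subalgebra $\mathfrak k\subseteq\mathfrak g$. Applying \eqref{eq4.5} to the lower central series gives $\operatorname{coeff}(\mathcal H^j)=\operatorname{coeff}(\mathcal H)^j$. Hence $\mathcal H$ is nilpotent if and only if $\operatorname{coeff}(\mathcal H)$ is nilpotent. Here one uses that $\mathcal H^j=0$ exactly when its coefficient space is zero.

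\emph{($\Leftarrow$)} Let $\mathfrak h$ be a Cartan subalgebra of $\mathfrak g$. Then $\operatorname{Cur}(\mathfrak h)$ is nilpotent, since $[\operatorname{Cur}(\mathfrak h)\cdot\operatorname{Cur}(\mathfrak h)]=\operatorname{Cur}([\mathfrak h,\mathfrak h])$ by sesquilinearity. For self-normalization, take $x=\sum_i x_i\partial^i\in N(\operatorname{Cur}(\mathfrak h))$ and $h\in\mathfrak h$. Sesquilinearity gives $[x_\lambda h]=\sum_i(-\lambda)^i[x_i,h]_{\mathfrak g}$. This lies in $\operatorname{Cur}(\mathfrak h)[\lambda]$ if and only if every $[x_i,h]\in\mathfrak h$, because the coefficients of distinct powers of $\lambda$ are independent constants in $\mathfrak g\subset\C[\partial]\otimes\mathfrak g$. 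Thus each $x_i\in N_{\mathfrak g}(\mathfrak h)=\mathfrak h$, so $x\in\operatorname{Cur}(\mathfrak h)$.

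\emph{($\Rightarrow$)} Let $\mathcal H$ be Cartan and set $\mathfrak h=\operatorname{coeff}(\mathcal H)$. By \eqref{eq4.5}, $\mathfrak h$ is a subalgebra, and it is nilpotent by the remark above. Hence $\operatorname{Cur}(\mathfrak h)$ is a nilpotent conformal subalgebra containing $\mathcal H$. Lemma~\ref{ca:normalizer-facts} (maximality among nilpotent subalgebras) then gives $\mathcal H=\operatorname{Cur}(\mathfrak h)$, exactly as in the Virasoro discussion above. It remains to show that $N_{\mathfrak g}(\mathfrak h)=\mathfrak h$. If $y\in\mathfrak g$ normalizes $\mathfrak h$, then by the computation in ($\Leftarrow$) the constant element $y\in\operatorname{Cur}(\mathfrak g)$ normalizes $\operatorname{Cur}(\mathfrak h)$. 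Indeed, $[y_\lambda p(\partial)h]=p(\partial+\lambda)[y,h]$ lies in $\operatorname{Cur}(\mathfrak h)[\lambda]$. Self-normalization of $\mathcal H$ gives $y\in\operatorname{Cur}(\mathfrak h)\cap\mathfrak g=\mathfrak h$. Therefore $\mathfrak h$ is Cartan in $\mathfrak g$.

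The only delicate point is the claim $\operatorname{coeff}(\mathcal H^j)=\mathfrak h^j$. This holds by induction from \eqref{eq4.5} with $\mathcal U=\mathcal H$ and $\mathcal V=\mathcal H^{j-1}$. No step seems a serious obstacle, although care is needed in extracting $\lambda$-coefficients in the normalizer computation.
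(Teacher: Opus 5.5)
Your proposal is correct and follows essentially the same route as the paper: pass to $\mathfrak h=\operatorname{coeff}(\mathcal H)$ via \eqref{eq4.5}, use maximality of a Cartan subalgebra among nilpotent conformal subalgebras (Lemma~\ref{ca:normalizer-facts}) to get $\mathcal H=\operatorname{Cur}(\mathfrak h)$, and compare normalizers through $N_{\operatorname{Cur}(\mathfrak g)}(\operatorname{Cur}(\mathfrak h))=\operatorname{Cur}(N_{\mathfrak g}(\mathfrak h))$. The only difference is cosmetic: you check one inclusion of this normalizer identity in each direction, whereas the paper states it once as \eqref{eq4.6} and uses it for both.
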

	\begin{proof}
		Suppose first that $\mathcal H$ is a Cartan subalgebra of
		$\operatorname{Cur}(\mathfrak g)$, and set
		$\mathfrak h=\operatorname{coeff}(\mathcal H)$.
		By \eqref{eq4.5}, $\mathfrak h$ is a nilpotent. For $x=\sum\limits_i\partial^i x_i\in\operatorname{Cur}(\mathfrak g)$ we have that $[x_\lambda\operatorname{Cur}(\mathfrak h)]
		\subseteq\operatorname{Cur}(\mathfrak h)[\lambda]$ if and only if $[x_i,\mathfrak h]_{\mathfrak g}\subseteq\mathfrak h$
		for every $i$, or equivalently,
		$x_i\in N_{\mathfrak g}(\mathfrak h)$ for every $i$. Hence, 
		\begin{equation}\label{eq4.6}
			N_{\operatorname{Cur}(\mathfrak g)}
			\bigl(\operatorname{Cur}(\mathfrak h)\bigr)
			= \operatorname{Cur}\bigl(N_{\mathfrak g}(\mathfrak h)\bigr).
		\end{equation}
		Since $\operatorname{Cur}(\mathfrak h)$ is nilpotent and contains
		$\mathcal H$, self-normalization and Lemma~\ref{ca:normalizer-facts} imply $\mathcal H=\operatorname{Cur}(\mathfrak h)$.
		Equation~\eqref{eq4.6} therefore gives $\operatorname{Cur}(\mathfrak h) = \operatorname{Cur}\bigl(N_{\mathfrak g}(\mathfrak h)\bigr),$ whence $N_{\mathfrak g}(\mathfrak h)=\mathfrak h$.
		Thus, $\mathfrak h$ is a Cartan subalgebra of $\mathfrak g$.
		
		Conversely, let $\mathfrak h$ be a Cartan subalgebra of
		$\mathfrak g$. Then $\operatorname{Cur}(\mathfrak h)$ is
		nilpotent. Since \eqref{eq4.6} holds
		for every Lie subalgebra $\mathfrak h\subseteq\mathfrak g$, we conclude that $\operatorname{Cur}(\mathfrak h)$ is self-normalizing
		in $\operatorname{Cur}(\mathfrak g)$. Hence, it is a Cartan
		subalgebra.
	\end{proof}
	
	\subsection{Conjugacy of Cartan subalgebras.}
	
	For a Lie conformal algebra $\mathcal L$, its automorphism group is
	$$\operatorname{Aut}(\mathcal L) = \left\{
	\varphi\in\operatorname{Aut}_{\C[\partial]}(\mathcal L)
	\;\middle|\; \varphi([x_\lambda y]) = [\varphi(x)_\lambda\varphi(y)] \text{ for all }x,y\in\mathcal L
	\right\},$$
	where $\operatorname{Aut}_{\C[\partial]}(\mathcal L)$ denotes
	the group of automorphisms of the underlying $\C[\partial]$-module.
	
	For $x\in\mathcal L$, let $d_x:=(\operatorname{ad}_x)_0\in\operatorname{Der}(\mathcal L)$. If $d_x$ is nilpotent, then $\exp(d_x)=\sum_{k\geq0}\frac{d_x^k}{k!}\in\operatorname{Aut}(\mathcal L).$ An automorphism of this form is called an \emph{inner automorphism}. We denote by $\operatorname{InAut}(\mathcal L)$ the subgroup of $\operatorname{Aut}(\mathcal L)$ generated by all such automorphisms.
	
	By \cite[Corollary~8.3]{DK}, the solvability of $\mathcal R$ implies the nilpotence of $\mathcal R^2$. Hence, for every $x\in\mathcal R^2$, the zero mode $d_x=(\operatorname{ad}_x)_0$ is a nilpotent derivation of $\mathcal R$. Therefore, $\exp(d_x)\in\operatorname{InAut}(\mathcal R).$
	
	\begin{lemma}\label{ca:graph}
		Let $\mathcal L=\mathcal H\ltimes\mathcal A$ be a Lie conformal
		algebra, where $\mathcal H$ is nilpotent and
		$\mathcal A=\mathbb C[\partial]a$ is an abelian ideal with the action on $\mathcal{H}$ via $[h_\lambda a]=\phi_h(\lambda)a,$ $h\in\mathcal H,$ for a nonzero weight $\phi$. Then every conformal subalgebra
		$\mathcal K$ complementary to $\mathcal A$ in $\mathcal L$ has the
		form
		\begin{equation}\label{ca:all-graphs}
			\mathcal K
			=
			\bigl\{h+c\phi_h(-\partial)a \,\bigm|\, h\in\mathcal H\bigr\}
			=
			\exp(-c d_a)(\mathcal H)
		\end{equation}
		for a unique $c\in\mathbb C$.
	\end{lemma}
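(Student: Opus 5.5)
The plan is to reduce everything to a polynomial cocycle equation and then solve it. Since $\mathcal K$ is complementary to $\mathcal A$, the projection $\mathcal K\to\mathcal H$ along $\mathcal A$ is a $\C[\partial]$-isomorphism. Hence $\mathcal K=\{h+f(h)\mid h\in\mathcal H\}$ for a unique $\C[\partial]$-linear map $f\colon\mathcal H\to\mathcal A$. Write $f(h)=g_h(\partial)a$; then $h\mapsto g_h$ is $\C[\partial]$-linear into $\C[\partial]$.

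First I record the brackets we need. Skew-symmetry and sesquilinearity give $[a_\lambda h]=-\phi_h(-\lambda-\partial)a$, and in particular $d_a(h)=-\phi_h(-\partial)a$. Since $\mathcal A$ is abelian, $d_a^2=0$ on $\mathcal L$, so $\exp(-cd_a)(h)=h+c\phi_h(-\partial)a$. This already proves the second equality in \eqref{ca:all-graphs}. Moreover $\exp(-cd_a)$ is an automorphism fixing $\mathcal A$, so the graph of $h\mapsto c\phi_h(-\partial)a$ is indeed a complementary subalgebra for every $c$. The Jacobi identity applied to $a$ shows that $\phi$ vanishes on all coefficients of brackets $[h_\lambda h']$. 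In particular $\phi_{\mathcal H^2}=0$.

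Next I expand the condition that $\mathcal K$ is closed under the $\lambda$-bracket. Using $[\mathcal A_\lambda\mathcal A]=0$, this condition is equivalent to
\[
g_{[h_\lambda h']}(\partial)=\phi_h(\lambda)\,g_{h'}(\partial+\lambda)-\phi_{h'}(-\partial-\lambda)\,g_h(-\lambda),\qquad h,h'\in\mathcal H,
\]
where $g$ is extended to $\mathcal H[\lambda]$ coefficientwise. This is a $1$-cocycle condition. The solutions $g_h=c\phi_h(-\partial)$ are exactly the coboundaries of the constants $c\in\C a$. So it remains to show that every solution has this form, with $c$ unique. Uniqueness is immediate: $\phi\neq0$ and $\C[\partial]$ is a domain.

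For existence, I fix $h_0\in\mathcal H$ with $\phi_{h_0}\neq0$. I then split into two steps.
\begin{itemize}
\item[(a)] Show that $g_{h_0}(\partial)=c\,\phi_{h_0}(-\partial)$ for some $c\in\C$. Then replace $g$ by $g'_h=g_h-c\phi_h(-\partial)$, which still satisfies the cocycle equation and has $g'_{h_0}=0$.
\item[(b)] Show that $g'\equiv0$.
\end{itemize}

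For (b), put $h=h_0$ in the equation. This gives $\phi_{h_0}(\lambda)g'_{h'}(\partial+\lambda)=g'_{[h_0{}_\lambda h']}(\partial)$, i.e. $g'$ intertwines $\operatorname{ad}_{h_0}$ with multiplication by $\phi_{h_0}(\lambda)$ and a shift. Iterating $N$ times, where $(\operatorname{ad}_{h_0})^N$ vanishes on $\mathcal H$ by nilpotence, yields
\[
\prod_{k=1}^{N}\phi_{h_0}(\lambda_k)\;g'_{h'}(\partial+\lambda_1+\cdots+\lambda_N)=0 .
\]
Hence $g'_{h'}=0$, exactly as in the proof of Theorem~\ref{ca:existence}.

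For (a), I use the same iteration, but with $h'=h_0$ and without first subtracting anything. This gives
\[
\phi_{h_0}(\lambda)g_{h_0}(\partial+\lambda)-\phi_{h_0}(-\partial-\lambda)g_{h_0}(-\lambda)=g_{[h_0{}_\lambda h_0]}(\partial).
\]
The right-hand side is controlled by the iterated relation for elements of $\mathcal H^2$. The idea is that iterating $\operatorname{ad}_{h_0}$ kills the left-hand terms involving $g_{h'}(\partial+\sum\lambda_k)$. What survives is that the polynomial $\phi_{h_0}(\lambda)g_{h_0}(\partial+\lambda)-\phi_{h_0}(-\partial-\lambda)g_{h_0}(-\lambda)$ must vanish modulo the appropriate image. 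Setting $\partial=-\lambda-\mu$ and comparing, this should force $g_{h_0}(-\mu)\phi_{h_0}(\lambda)=g_{h_0}(-\lambda)\phi_{h_0}(\mu)$. That separation of variables gives $g_{h_0}(\partial)=c\,\phi_{h_0}(-\partial)$.

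I expect step (a), extracting the proportionality from the self-bracket relation, to be the main obstacle. If the direct argument is awkward, I would first try an alternative: regard $\mathcal L$ as a $\mathcal K$-module. Since $\mathcal K$ is nilpotent, the decomposition \eqref{ca:weight-decomposition} gives $\mathcal L=\mathcal L^0_{\mathcal K}\oplus\mathcal A$, with $\mathcal K\subseteq\mathcal L^0_{\mathcal K}$. I would then compare this with the analogous decomposition $\mathcal L=\mathcal H\oplus\mathcal A$ for $\mathcal H$, aiming to show that both complements are detected by the single-element kernels $\mathcal L_{0,h_0}$ and $\mathcal L_{0,h_0+f(h_0)}$. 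One then checks that these kernels are carried to one another by $\exp(-cd_a)$.
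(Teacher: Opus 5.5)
Your setup, the cocycle equation, the computation $\exp(-cd_a)(h)=h+c\phi_h(-\partial)a$, the uniqueness of $c$, and step (b) are all correct. The gap is in step (a), which you yourself flag. You never show that the right-hand side $g_{[h_0{}_\lambda h_0]}(\partial)$ of the self-bracket relation vanishes. Without that, the separated identity $g_{h_0}(-\mu)\phi_{h_0}(\lambda)=g_{h_0}(-\lambda)\phi_{h_0}(\mu)$ does not follow. Phrases such as ``kills the left-hand terms'' and ``vanish modulo the appropriate image'' do not supply an argument, and the weight-space alternative is left unfinished.

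The missing fact is that $g$ vanishes on all of $\mathcal H^2$. This is exactly what the paper proves first, by descending induction on $\mathcal H^j$. You already have both ingredients; in particular you recorded $\phi(\mathcal H^2)=0$ but never used it. For $h'\in\mathcal H^2$, the term $\phi_{h'}(-\partial-\lambda)g_{h_0}(-\lambda)$ in the cocycle equation with $h=h_0$ drops out \emph{without} any prior normalization. This leaves $g_{[h_0{}_\lambda h']}(\partial)=\phi_{h_0}(\lambda)g_{h'}(\partial+\lambda)$. The coefficients of $[h_0{}_\lambda h']$ stay in $\mathcal H^2$, so your step-(b) iteration gives $\prod_k\phi_{h_0}(\lambda_k)\,g_{h'}(\partial+\lambda_1+\cdots+\lambda_N)=0$, hence $g|_{\mathcal H^2}=0$. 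Now the self-bracket relation, after substituting $\partial=-\lambda-\mu$, gives your separated identity. Fixing $\mu_0\in\C$ with $\phi_{h_0}(\mu_0)\ne0$ then yields $g_{h_0}(x)=c\,\phi_{h_0}(-x)$.

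Once $g|_{\mathcal H^2}=0$ is known, step (b) becomes superfluous. Take $h=h_0$ and arbitrary $h'$ in the cocycle equation. The left side vanishes because $[h_0{}_\lambda h']\in\mathcal H^2[\lambda]$, leaving $\phi_{h_0}(\lambda)g_{h'}(\partial+\lambda)=\phi_{h'}(-\partial-\lambda)g_{h_0}(-\lambda)=c\,\phi_{h_0}(\lambda)\phi_{h'}(-\partial-\lambda)$. Cancelling $\phi_{h_0}(\lambda)$ gives $g_{h'}(\partial)=c\,\phi_{h'}(-\partial)$ for every $h'$ at once; this is the paper's route. Your (a)+(b) split is also valid after the repair, but it costs the normalization $g'$ and a second use of the nilpotence iteration.
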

	
	\begin{proof}
		The projection $\pi:\mathcal L\to\mathcal H$ restricts to an
		isomorphism of modules $\pi|_{\mathcal K}:\mathcal K\to\mathcal H$. Hence there exists a unique $\C[\partial]$-linear map $F:\mathcal H\rightarrow\mathcal A$ such that
		$\mathcal K=\{\,h+F(h)\mid h\in\mathcal H\,\}.$  Write $F(h)=q_h(\partial)a$, with $q_h\in\mathbb C[\partial]$.
		Since $\mathcal A$ is abelian, the condition of $\mathcal K$ being subalgebra is equivalent to
		$$F([h_1{}_\lambda h_2]) = [h_1{}_\lambda F(h_2)] + [F(h_1){}_\lambda h_2]. $$
		By sesquilinearity and skew-symmetry, this identity becomes
		\begin{equation}\label{ca:graph-equation}
			F([h_1{}_\lambda h_2]) = \bigl(
			q_{h_2}(\partial+\lambda)\phi_{h_1}(\lambda) - q_{h_1}(-\lambda)\phi_{h_2}(-\partial-\lambda)\bigr)a.
		\end{equation}
		The Jacobi identity leads $\phi(\mathcal H^2)=0$. Choose $\tilde h\in\mathcal H$ such that $p(\lambda):=\phi_{\tilde h}(\lambda)\ne0$.
		
		Let $s$ is the nilindex of $\mathcal{H}$. We prove by descending induction on $j$ that 
		$F(\mathcal H^j)=0,$ $2\leq j\leq s.$ The case $j=s$ is immediate. Suppose that $F(\mathcal H^{j+1})=0$ for some $j\geq2$, and let $h_1\in\mathcal H^j\subseteq\mathcal H^2$, we have $\phi_{h_1}=0$, while $[\tilde h_\lambda h_1]\in\mathcal H^{j+1}[\lambda].$ Hence $F([\tilde h_\lambda h_1])=0$, and \eqref{ca:graph-equation} gives $q_{h_1}(\partial+\lambda)p(\lambda)=0.$ Since $p\ne0$, it follows that $q_{h_1}=0$, and hence $F(h_1)=0$. Thus $F(\mathcal H^j)=0$, completing the induction. In particular, $F(\mathcal H^2)=0.$ Therefore, for arbitrary $h\in\mathcal H$, we have
		$F([\tilde h_\lambda h])=0$.
		Equation~\eqref{ca:graph-equation} consequently gives
		$$q_h(\partial+\lambda)p(\lambda) = q_{\tilde h}(-\lambda)\phi_h(-\partial-\lambda).$$
		Then there exists $c\in \mathbb{C}$ such that $q_h(\partial)=c\phi_h(-\partial),$ $h\in\mathcal H.$
		Thus, we obtain $\mathcal K=\bigl\{h+c\phi_h(-\partial)a\mid h\in\mathcal H\bigr\}.$
		
		Taking into account that  $d_a(h)=-\phi_h(-\partial)a,$ $h\in\mathcal H$ and $d_a(\mathcal A)=0,$ 
		we obtain $d_a^2=0.$ Hence, $\exp(-c d_a)=\operatorname{id}-c d_a$
		is an automorphism of $\mathcal L$ satisfying $\exp(-c d_a)(h)=h+c\phi_h(-\partial)a.$ This proves \eqref{ca:all-graphs}.
		The uniqueness of $c$ follows from the uniqueness of the 
		map $F$.
	\end{proof}
	
	We now prove that any two Cartan subalgebras of $\mathcal R$ are conjugate under $\operatorname{InAut}(\mathcal R)$.
	\begin{theorem}\label{ca:conjugacy}
		Any two Cartan subalgebras of $\mathcal R$ are conjugate by a finite
		product of exponentials of nilpotent zero modes of elements of
		$\mathcal R^2$.
	\end{theorem}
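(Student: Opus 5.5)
We argue by induction on $\operatorname{rank}$ plus the torsion contribution; more precisely, by induction on the length of a chain of ideals, mimicking the classical proof of conjugacy of Cartan subalgebras in solvable Lie algebras. First we reduce to the free case. By Corollary~\ref{ca:central-quotient} applied to $\mathcal T=\operatorname{Tor}(\mathcal R)\subseteq\operatorname{Center}(\mathcal R)$, Cartan subalgebras of $\mathcal R$ correspond bijectively to those of $\bar{\mathcal R}=\mathcal R/\mathcal T$, via $\mathcal H\mapsto \mathcal H/\mathcal T$ and $\bar{\mathcal H}\mapsto \pi^{-1}(\bar{\mathcal H})$. Every $x\in\bar{\mathcal R}^2$ lifts to an element of $\mathcal R^2$, and $\exp(d_x)$ on $\mathcal R$ covers $\exp(d_{\bar x})$ on $\bar{\mathcal R}$. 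Since both Cartan subalgebras contain $\mathcal T$, conjugacy upstairs follows from conjugacy downstairs. Hence we may assume $\mathcal R$ is free of rank $r$, and we induct on $r$. If $\mathcal R$ is nilpotent, then $\mathcal R^0=\mathcal R$ for any nilpotent subalgebra, so $\mathcal R$ is its own unique Cartan subalgebra by Proposition~\ref{ca:zero-weight-criterion}.

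For the inductive step, let $\mathcal H_1,\mathcal H_2$ be Cartan subalgebras. We choose a minimal nonzero saturated abelian ideal $\mathcal A$. Concretely, we take $\mathcal A=\mathcal M_1=\C[\partial]a$ from the flag \eqref{ca:triangular-character}, or more safely a rank-one saturated ideal inside the last nonzero term of the derived series, saturated so that $\mathcal R/\mathcal A$ stays free. By Proposition~\ref{ca:cartan-quotient}, the images $\bar{\mathcal H}_1,\bar{\mathcal H}_2$ are Cartan in $\mathcal R/\mathcal A$. By induction they are conjugate by a product $\prod\exp(d_{\bar x_k})$ with $\bar x_k\in(\mathcal R/\mathcal A)^2$. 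Lifting each $\bar x_k$ to $x_k\in\mathcal R^2$ and applying $\prod\exp(d_{x_k})$ to $\mathcal H_1$, we may assume $\mathcal H_1+\mathcal A=\mathcal H_2+\mathcal A=:\mathcal L'$. By Proposition~\ref{ca:cartan-quotient}, $\mathcal L'$ is a self-normalizing subalgebra, and both $\mathcal H_i$ are Cartan in $\mathcal L'$, since their normalizers in $\mathcal L'$ lie in their normalizers in $\mathcal R$.

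Now we split into cases according to the character $\phi$ of $\mathcal H_1$ on $\mathcal A$. If $\phi|_{\mathcal H_1}=0$, then $\mathcal A$ lies in the generalized zero weight space of $\mathcal H_1$, so $\mathcal A\subseteq\mathcal H_1$ and likewise $\mathcal A\subseteq\mathcal H_2$ by Proposition~\ref{ca:zero-weight-criterion}. In that case $\mathcal H_1=\mathcal L'=\mathcal H_2$. If $\phi\neq0$, then $\mathcal A\cap\mathcal H_i=0$ by the weight argument in the proof of Theorem~\ref{ca:existence}. Hence $\mathcal L'=\mathcal H_1\ltimes\mathcal A$ with $\mathcal H_2$ a complementary subalgebra. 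Lemma~\ref{ca:graph} then gives $\mathcal H_2=\exp(-cd_a)(\mathcal H_1)$. The derivation $d_a$ is defined on all of $\mathcal R$, and we need $a\in\mathcal R^2$ to stay within the statement. Since $\phi\neq0$, we have $[\mathcal H_1{}_\lambda a]=\phi(\lambda)a$ with a nonzero polynomial, so $\mathcal R^2$ contains $\phi_h(-\partial)a$ for all $h$. We then expect to replace $a$ by a suitable element of $\mathcal R^2\cap\mathcal A$, using that the graph in Lemma~\ref{ca:graph} only involves $\phi_h(-\partial)a$. We also need $\exp(d_a)$ to act on $\mathcal R$ as an inner automorphism, which holds because $\mathcal A$ is abelian and $d_a(\mathcal R)\subseteq\mathcal A$, so $d_a^2=0$.

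The main obstacle is this last point. We must either show $\mathcal A\subseteq\mathcal R^2$ when $\phi\ne0$, or show that the correction $c\,\phi_h(-\partial)a$ is realized by $d_y$ for some $y\in\mathcal R^2$. To handle it, we first choose $\mathcal A$ inside $\operatorname{Sat}(\mathcal R^2)$, which is possible whenever $\mathcal R$ is not abelian, and taking saturations preserves the flag. If $\mathcal A\not\subseteq\mathcal R^2$, we write $\mathcal A\cap\mathcal R^2=\C[\partial]p(\partial)a$. We then try to check, via the divisibility of all $\phi_h(-\partial)$ forced by $\phi(\lambda)a\in\mathcal R^2[\lambda]$, that the map $h\mapsto c\,\phi_h(-\partial)a$ equals $-c'\,d_{p(\partial)a}$ up to a constant rescaling of $c$. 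If this divisibility fails, the fallback is to conjugate first by an element of $\mathcal H_1\cap\mathcal R^2$-twisted lift.
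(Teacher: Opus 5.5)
Your overall architecture matches the paper's. You reduce modulo the central torsion and induct on rank in the free case. You peel off the rank-one saturated abelian ideal $\mathcal A=\mathcal M_1=\C[\partial]a$ of the Lie flag, conjugate the images in $\mathcal R/\mathcal A$ by lifted exponentials, and finish inside $\mathcal H_1\ltimes\mathcal A$ with Lemma~\ref{ca:graph}. But the proof is not finished. The step you call the main obstacle is that $\exp(-c\,d_a)$ must be the exponential of a zero mode of an element of $\mathcal R^2$. You leave it as a plan (``we then try to check \dots'', ``if this divisibility fails, the fallback is \dots''), so as written the argument does not reach the statement.

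The missing observation is one line. In the flag \eqref{ca:triangular-character} the character takes values in $\C[\lambda]$, with no $\partial$. So for any $h$ with $\phi_h\neq0$, the $\lambda$-coefficients of $[h_\lambda a]=\phi_h(\lambda)a$ are \emph{complex} multiples of $a$. The top one is a nonzero scalar times $a$, hence $a\in\mathcal R^2$ outright, and $\exp(-c\,d_a)=\exp(d_{-ca})$ with $-ca\in\mathcal R^2$. Your remark that $\mathcal R^2$ contains $\phi_h(-\partial)a$ is true but much weaker than what the same bracket gives. It sends you into an unnecessary detour: placing $\mathcal A$ inside $\operatorname{Sat}(\mathcal R^2)$, writing $\mathcal A\cap\mathcal R^2=\C[\partial]p(\partial)a$, a divisibility check, and a ``twisted lift''. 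None of this is an argument as stated, and with the observation above $p$ is constant and none of it is needed.

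There is a smaller looseness in your case split. ``Likewise $\mathcal A\subseteq\mathcal H_2$'' requires $\phi|_{\mathcal H_2}=0$, which you do not justify. It follows from $\mathcal R=\mathcal H_i+\mathcal R^2$ (Corollary~\ref{ca:cartan-derived}) together with $\phi(\mathcal R^2)=0$. Alternatively, bypass it: $\mathcal A\subseteq\mathcal H_1$ gives $\mathcal H_2\subseteq\mathcal H_1+\mathcal A=\mathcal H_1$, and maximality (Lemma~\ref{ca:normalizer-facts}) forces equality.

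The same corollary is what makes your two cases ($\phi|_{\mathcal H_1}=0$ versus $\phi\neq0$) exhaustive. It also gives $\phi|_{\mathcal H_2}\neq0$ in the second case, which you need for $\mathcal A\cap\mathcal H_2=0$. With these repairs your proof coincides with the paper's.
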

	
	\begin{proof}
		Let $\mathcal H_1$ and $\mathcal H_2$ be Cartan subalgebras of
		$\mathcal R$. We first consider the case of $\mathcal R$ is free,
		arguing by induction on $r=\operatorname{rank}\mathcal R$. For $r\leq 1$, solvability implies that $\mathcal R$ is abelian,
		and the assertion is immediate.
		
		Suppose that $r>1$. Choose an adjoint ideal flag as in \eqref{ca:triangular-character}, and set
		$\mathcal A=\mathcal M_1=\C[\partial]m$.
		Then $\mathcal A$ is a saturated abelian ideal with the products $[a_\lambda m]=\phi_a(\lambda)m,$ $a\in\mathcal R.$ Note that 
		$d_x$ is nilpotent for every $x\in\mathcal R^2$.
		
		Let $\pi : \mathcal R\rightarrow \overline{\mathcal R}:=\mathcal R/\mathcal A$ be the quotient map. By Proposition~\ref{ca:cartan-quotient},
		the subalgebras $\pi(\mathcal H_1)$ and $\pi(\mathcal H_2)$
		are Cartan subalgebras of $\overline{\mathcal R}$.
		Since $\mathcal A$ is saturated, $\overline{\mathcal R}$ is free
		of rank $r-1$. The induction hypothesis therefore gives
		$\bar x_1,\ldots,\bar x_s\in\overline{\mathcal R}^{\,2}$ such that
		\[
		\pi(\mathcal H_2)
		=
		\exp(d_{\bar x_s})\cdots
		\exp(d_{\bar x_1})\bigl(\pi(\mathcal H_1)\bigr).
		\]
		Using $\pi(\mathcal R^2)=\overline{\mathcal R}^{\,2}$, choose
		$x_j\in\mathcal R^2$ with $\pi(x_j)=\bar x_j$.
		The identity $\pi\circ d_{x_j}=d_{\bar x_j}\circ\pi$ and the nilpotence of these zero modes imply $\pi\circ\exp(d_{x_j})
		=\exp(d_{\bar x_j})\circ\pi.$
		
		Thus, setting
		$$\Phi=\exp(d_{x_s})\cdots\exp(d_{x_1}),
		\qquad \mathcal H_3=\Phi(\mathcal H_1),$$
		we obtain
		\begin{equation}\label{ca:common-preimage}
			\mathcal H_3+\mathcal A = \mathcal H_2+\mathcal A =:\mathcal K.
		\end{equation}
		By Lemma~\ref{ca:normalizer-facts}, each $\mathcal H_j$,
		$j=2,3$, is saturated in $\mathcal R$.
		It follows that $\mathcal A\cap\mathcal H_j$ is saturated
		in the rank-one free module $\mathcal A$, and hence $\mathcal A\subseteq\mathcal H_j$ or $\mathcal A\cap\mathcal H_j=0.$
		
		If $\mathcal A\subseteq\mathcal H_2$, then
		\eqref{ca:common-preimage} gives
		$\mathcal H_3\subseteq\mathcal H_2$.
		Lemma~\ref{ca:normalizer-facts} yields
		$\mathcal H_3=\mathcal H_2$.
		The same argument applies if
		$\mathcal A\subseteq\mathcal H_3$. We may therefore assume that $\mathcal A\cap\mathcal H_2 =
		\mathcal A\cap\mathcal H_3 =0.$ 
		Equation~\eqref{ca:common-preimage} then gives
		\[
		\mathcal K
		=
		\mathcal H_j\oplus_{\C[\partial]}\mathcal A
		=
		\mathcal H_j\ltimes\mathcal A,
		\qquad j=2,3.
		\]
		Moreover, $\phi|_{\mathcal H_j}\neq0$ for $j=2,3$.
		Indeed, if this restriction vanished, $\mathcal A$ would
		centralize $\mathcal H_j$ and would therefore lie in its
		normalizer. Since $\mathcal H_j$ is self-normalizing, this
		would imply $\mathcal A\subseteq\mathcal H_j$, a contradiction.
		
		Lemma~\ref{ca:graph} now provides a scalar $c\in\C$ such that $
		\mathcal H_2=\exp(-c\,d_m)(\mathcal H_3).$ 
		It remains to show that $m\in\mathcal R^2$.
		Choose $h\in\mathcal H_3$ with $\phi_h(\lambda)\neq0$.
		Then the product $[h_\lambda m]=\phi_h(\lambda)m$
		implies $m\in\mathcal R^2$. Consequently, 
		$\mathcal H_2 = \exp(d_{-cm})\Phi(\mathcal H_1),$ which proves the assertion in the free module case.
		
		For the general case, set
		$\mathcal T=\operatorname{Tor}(\mathcal R)$ and let $\pi:\mathcal R\longrightarrow
		\overline{\mathcal R}:=\mathcal R/\mathcal T$ be the quotient map.
		By Lemma~\ref{ca:normalizer-facts}, $\mathcal T\subseteq\operatorname{Center}(\mathcal R)
		\subseteq\mathcal H_1\cap\mathcal H_2.$
		The algebra $\overline{\mathcal R}$ is free,
		and Proposition~\ref{ca:cartan-quotient} shows that
		$\pi(\mathcal H_1)$ and $\pi(\mathcal H_2)$ are Cartan
		subalgebras of $\overline{\mathcal R}$.
		The free case therefore gives
		$\bar x_1,\ldots,\bar x_s\in\overline{\mathcal R}^{\,2}$
		with nilpotent zero modes such that
		$$\pi(\mathcal H_2) = \exp(d_{\bar x_s})\cdots
		\exp(d_{\bar x_1})\bigl(\pi(\mathcal H_1)\bigr).
		$$
		Choose lifts $x_j\in\mathcal R^2$ of $\bar x_j$.
		As before, one can obtain 
		$\pi\circ d_{x_j}=d_{\bar x_j}\circ\pi.$ The nilpotence of $d_{\bar x_j}$ leads 
		$d_{x_j}^{n_j}(\mathcal R)\subseteq\mathcal T \subseteq \operatorname{Center}(\mathcal{R})$ for some $n_j$. Therefore $d_{x_j}$ are nilpotent as well. Hence, we get $\pi\circ\exp(d_{x_j}) = \exp(d_{\bar x_j})\circ\pi.$
		It follows that $\Psi:=\exp(d_{x_s})\cdots\exp(d_{x_1})$ satisfies
		$\pi(\Psi(\mathcal H_1))=\pi(\mathcal H_2)$
		and $\Psi_{\mathcal T}=\operatorname{id}_{\mathcal T}$. 
		Both $\Psi(\mathcal H_1)$ and $\mathcal H_2$ contain
		$\mathcal T$, so
		$$\Psi(\mathcal H_1) = \pi^{-1}\bigl(\pi(\Psi(\mathcal H_1))\bigr)
		= \pi^{-1}\bigl(\pi(\mathcal H_2)\bigr) = \mathcal H_2.$$
		This completes the proof.
	\end{proof}
	
	The following result extends the inner conjugacy theorem for Cartan subalgebras of solvable {\it LCA}s to current Lie conformal algebras.
	
	\begin{proposition}\label{ca:current-conjugacy}
		Let $\mathfrak g$ be a finite-dimensional complex Lie algebra.
		Any two Cartan subalgebras of $\operatorname{Cur}(\mathfrak g)$ are conjugated via a finite product of automorphisms of the form 
		$\operatorname{id}_{\mathbb C[\partial]}\otimes\sigma_i$, where 
		$\sigma_i\in \operatorname{InAut}(\mathfrak g)$.
	\end{proposition}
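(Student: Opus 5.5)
By Proposition~\ref{ca:current-correspondence}, the statement reduces to the classical conjugacy theorem for Cartan subalgebras of finite-dimensional complex Lie algebras. Let $\mathcal H_1$ and $\mathcal H_2$ be Cartan subalgebras of $\operatorname{Cur}(\mathfrak g)$. That proposition gives Cartan subalgebras $\mathfrak h_1,\mathfrak h_2$ of $\mathfrak g$ with $\mathcal H_j=\operatorname{Cur}(\mathfrak h_j)$; indeed $\mathfrak h_j=\operatorname{coeff}(\mathcal H_j)$. By the classical theorem \cite{Humphreys}, valid for arbitrary finite-dimensional complex Lie algebras, there exist $y_1,\ldots,y_k\in\mathfrak g$ with $\operatorname{ad}y_i$ nilpotent such that
\[
\sigma:=\exp(\operatorname{ad}y_k)\cdots\exp(\operatorname{ad}y_1)
\]
satisfies $\sigma(\mathfrak h_1)=\mathfrak h_2$. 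Here each $\sigma_i=\exp(\operatorname{ad}y_i)$ lies in $\operatorname{InAut}(\mathfrak g)$.

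Next I would check that for every $\tau\in\operatorname{Aut}(\mathfrak g)$ the map $\operatorname{id}_{\C[\partial]}\otimes\tau$ is an automorphism of $\operatorname{Cur}(\mathfrak g)$. It is clearly $\C[\partial]$-linear and bijective. On generators $x,y\in\mathfrak g$ it respects the $\lambda$-bracket, since $[\tau x_\lambda\tau y]=[\tau x,\tau y]_{\mathfrak g}=\tau[x,y]_{\mathfrak g}$. Because both sides of the bracket identity are sesquilinear, the identity then extends from generators to all of $\operatorname{Cur}(\mathfrak g)$.

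Finally, I would compute the image of $\operatorname{Cur}(\mathfrak h_1)$. Write $\Phi=\prod_i(\operatorname{id}\otimes\sigma_i)=\operatorname{id}\otimes\sigma$. Then
\[
\Phi\bigl(\operatorname{Cur}(\mathfrak h_1)\bigr)=\C[\partial]\otimes\sigma(\mathfrak h_1)=\operatorname{Cur}(\mathfrak h_2),
\]
which gives $\Phi(\mathcal H_1)=\mathcal H_2$.

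As a remark, $\operatorname{id}\otimes\exp(\operatorname{ad}y)$ equals $\exp(d_y)$ for $y\in\mathfrak g\subset\operatorname{Cur}(\mathfrak g)$. This holds because $d_y=(\operatorname{ad}_y)_0$ acts as $\operatorname{id}\otimes\operatorname{ad}y$. So these automorphisms are even inner in the conformal sense, although the statement does not require this.

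No step is a real obstacle once Proposition~\ref{ca:current-correspondence} is available. The only point needing care is to invoke the classical conjugacy theorem in its general form, for arbitrary rather than semisimple or solvable $\mathfrak g$, with conjugating automorphisms generated by $\exp(\operatorname{ad}y)$ for ad-nilpotent $y$. That is exactly the form of the theorem in \cite{Humphreys}, which uses the group $\mathcal E(\mathfrak g)$.
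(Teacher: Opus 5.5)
Your proposal is correct and follows essentially the same route as the paper: you reduce via Proposition~\ref{ca:current-correspondence} to $\mathcal H_j=\operatorname{Cur}(\mathfrak h_j)$, invoke classical conjugacy to obtain $\sigma\in\operatorname{InAut}(\mathfrak g)$ with $\sigma(\mathfrak h_1)=\mathfrak h_2$, and push forward by $\operatorname{id}_{\C[\partial]}\otimes\sigma$. Your explicit check that $\operatorname{id}_{\C[\partial]}\otimes\tau$ respects the $\lambda$-bracket, and your identification $\operatorname{id}_{\C[\partial]}\otimes\exp(\operatorname{ad}y)=\exp(d_y)$, spell out steps that the paper's displayed chain of equalities uses without comment.
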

	\begin{proof} Let $\mathcal H_1$ and $\mathcal H_2$ be Cartan subalgebras of $\operatorname{Cur}(\mathfrak g)$. By Proposition~\ref{ca:current-correspondence}, there exist
		Cartan subalgebras $\mathfrak h_1$ and $\mathfrak h_2$ of
		$\mathfrak g$ such that
		\[
		\mathcal H_i
		=
		\operatorname{Cur}(\mathfrak h_i)
		=
		\mathbb C[\partial]\otimes_{\mathbb C}\mathfrak h_i,
		\qquad i=1,2.
		\]
		The classical conjugacy theorem for Cartan subalgebras
		\cite{Bourbaki2005} provides an inner automorphism $\sigma$
		of $\mathfrak g$ satisfying $\sigma(\mathfrak h_1)=\mathfrak h_2$, where $\sigma=\prod\limits_{j=1}^{s}\exp(d_{x_j})$ for $x_j\in\mathfrak g$.
		Consequently,
		$$\bigl(\operatorname{id}_{\mathbb C[\partial]}\otimes\sigma\bigr)(\mathcal H_1)
		=\prod\limits_{j=1}^{s}\exp(d_{\mathbbm{1}\otimes x_j})(\mathcal H_1)
		=\mathbb C[\partial]\otimes_{\mathbb C}\prod\limits_{j=1}^{s}\exp({\operatorname{ad}}_{ x_j})(\mathfrak h_1)
		=\mathbb C[\partial]\otimes_{\mathbb C}\mathfrak h_2
		=\mathcal H_2,$$
		as required.
	\end{proof}
	
	Recall, that for a finite vertex algebra $\mathcal V$ the underlying $\C[\partial]$-module of $\mathcal V$, equipped with the $\lambda$-bracket $[a_\lambda b]=\sum\limits_{n\geq0}\frac{\lambda^n}{n!}\,a_{(n)}b,$
	is an {\it LCA}, denoted by $\mathcal V^{\mathrm{Lie}}.$ Moreover, for each $x\in\mathcal V$, the corresponding vertex operator is written as $Y(x,z)=\sum\limits_{n\in\mathbb Z}x_{(n)}z^{-n-1},$ where each $x_{(n)}\in\operatorname{End}_{\C}(\mathcal V)$. In particular, by the definition of the $\lambda$-bracket of $\mathcal V^{\mathrm{Lie}}$,
	we have $[x_\lambda a]_{\mid\lambda=0}=x_{(0)}a.$
	Therefore, we can identify the operator $x_{(0)}$ as $(\operatorname{ad}_x)_0.$ In the notation used above for {\it LCA}s, this is exactly the derivation $d_x$.
	
	In \cite[Remark~5.1]{DM2012}, D'Andrea and Marchei proved that Cartan subalgebras of a finite vertex algebra are conjugate under vertex algebra automorphisms. They further asked whether the conjugating automorphisms can always be chosen to be inner. Application of Theorem~\ref{ca:conjugacy} in the following result yields an affirmative answer.
	
	\begin{theorem}\label{ca:vertex-inner}
		Let $\mathcal V$ be a finite vertex algebra. Any two Cartan
		subalgebras of $\mathcal V$ are conjugate by a 
		vertex algebra automorphisms of the form
		$\sigma=\prod\limits_{j=1}^{s}\exp (d_{x_j}),$ $x_j\in(\mathcal V^{\mathrm{Lie}})^2.$
	\end{theorem}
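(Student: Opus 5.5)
The plan is to reduce everything to Theorem~\ref{ca:conjugacy} applied to $\mathcal R:=\mathcal V^{\mathrm{Lie}}$, and then to check that the conjugating maps respect the full vertex algebra structure. Following D'Andrea and Marchei \cite{DM2012}, a Cartan subalgebra of $\mathcal V$ means a Cartan subalgebra of the Lie conformal algebra $\mathcal V^{\mathrm{Lie}}$. I will take this definition as given; if the paper's definition differs, the first step is to translate it into this one.

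\textbf{Step 1: solvability.} By D'Andrea's theorem \cite{AD2008}, $\mathcal V^{\mathrm{Lie}}$ is a finite solvable LCA. Theorem~\ref{ca:conjugacy} then applies. For Cartan subalgebras $\mathcal H_1,\mathcal H_2$ it gives elements $x_1,\ldots,x_s\in(\mathcal V^{\mathrm{Lie}})^2$ with nilpotent zero modes $d_{x_j}$, and hence $\mathcal H_2=\sigma(\mathcal H_1)$ for
\[
\sigma=\exp(d_{x_s})\cdots\exp(d_{x_1}).
\]
Nilpotence of each $d_{x_j}$ holds because $(\mathcal V^{\mathrm{Lie}})^2$ is nilpotent by \cite[Corollary~8.3]{DK}. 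Since $d_{x_j}$ acts on the whole module $\mathcal V$, the operator $\sigma$ is defined on all of $\mathcal V$.

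\textbf{Step 2: $\sigma$ is a vertex algebra automorphism.} We identify $d_x=x_{(0)}$. By the Borcherds identity, $x_{(0)}$ is a derivation of every $n$-th product:
\[
x_{(0)}(a_{(n)}b)=(x_{(0)}a)_{(n)}b+a_{(n)}(x_{(0)}b),\qquad n\in\mathbb Z .
\]
It also commutes with $\partial$ and kills the vacuum, since $x_{(0)}\mathbf 1=0$. A nilpotent derivation $\delta$ of a (nonassociative) product exponentiates to an automorphism, by the Leibniz formula
\[
\exp(\delta)(a_{(n)}b)=\bigl(\exp(\delta)a\bigr)_{(n)}\bigl(\exp(\delta)b\bigr);
\]
the sum is finite because $\delta$ is nilpotent. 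Therefore each $\exp(d_{x_j})$ is an automorphism of the vertex algebra $\mathcal V$ fixing $\mathbf 1$, and so is their product $\sigma$.

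\textbf{Main obstacle.} The real content is Theorem~\ref{ca:conjugacy}; what remains is bookkeeping. The point needing care is that nilpotence is required on the whole of $\mathcal V$. Theorem~\ref{ca:conjugacy} already provides this, both in its free case and in its torsion reduction, so $\exp(d_{x_j})$ is a finite sum on $\mathcal V$. With that in hand, Step~2's Leibniz expansion is legitimate for every product $a_{(n)}b$, including negative $n$. This yields inner conjugacy in the strong vertex-algebra sense and answers the question of \cite[\S5.4]{DM2012}.
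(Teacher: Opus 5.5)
Your proposal is correct and follows the paper's proof. You use solvability of $\mathcal V^{\mathrm{Lie}}$ from \cite{AD2008}, apply Theorem~\ref{ca:conjugacy}, and then check that each $d_x=x_{(0)}$, $x\in(\mathcal V^{\mathrm{Lie}})^2$, is a nilpotent vertex algebra derivation whose exponential is an automorphism by the iterated Leibniz rule; the one translation step you flag, from Cartan subalgebras of $\mathcal V$ to those of $\mathcal V^{\mathrm{Lie}}$, is exactly what the paper handles by citing \cite[Remark~5.1]{DM2012}.
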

	\begin{proof} Let $\mathcal U_1$ and $\mathcal U_2$ be Cartan subalgebras of $\mathcal V$, and set $\mathcal R=\mathcal V^{\mathrm{Lie}}$. By \cite[Theorem~3.3]{AD2008}, the Lie conformal algebra $\mathcal R$ is solvable. Moreover, by
		\cite[Remark~5.1]{DM2012}, the underlying conformal subalgebras
		of $\mathcal U_1$ and $\mathcal U_2$ are Cartan subalgebras of
		$\mathcal R$. Theorem~\ref{ca:conjugacy} therefore yields elements $x_j\in\mathcal R^2$ such that 
		$$\mathcal U_2=\prod\limits_{j=1}^{s}\exp (d_{x_j})(\mathcal U_1).$$
		
		The mode commutator identity gives $d_x(a_{(n)}b) = (d_xa)_{(n)}b+a_{(n)}d_xb$ for all $x, a,b\in\mathcal V$ and $n\in\mathbb Z$.
		The vacuum and translation axioms also give $d_x\mathbf1=0$ and $[d_x,\partial]=0,$ respectively. Thus, $d_x$ is a derivation of the vertex algebra $\mathcal V$.
		
		Since $\mathcal R$ is solvable, \cite[Corollary~8.3]{DK} leads that $\mathcal R^2$ is nilpotent. Therefore, for $x\in\mathcal R^2$, we have $d_{x}(\mathcal R)\subseteq\mathcal R^2,$ which implies the nilpotence of $d_{x}$. The iterated Leibniz identity implies
		$$\exp(d_x)(a_{(n)}b) = \bigl(\exp(d_x)a\bigr)_{(n)}
		\bigl(\exp(d_x)b\bigr), \qquad n\in\mathbb Z.$$
		Moreover, we have 
		$$\exp(d_x)\mathbf1=\mathbf1, \qquad \exp(d_x)\partial=\partial\exp(d_x), \qquad \exp(d_x)^{-1}=\exp(-d_x).$$
		Thus, each $\exp(d_{x_j})$ is an inner vertex algebra
		automorphism of $\mathcal V$, and their composition
		$\sigma$ satisfies $\sigma(\mathcal U_1)=\mathcal U_2$.
	\end{proof}
	
	\section{Structure of conformal tori.}\label{to:section}
	
	Tori of derivations play an important role in the structure theory of nilpotent Lie algebras \cite{GozeHakimjanov}, although a nilpotent Lie algebra need not admit a nonzero torus \cite{DixmierLister}. In this section we establish simultaneous diagonalization for conformal tori of finite free Lie conformal algebras and gives a criterion for the existence of a nonzero torus. 
	
	\begin{definition}\label{to:definition}
		A conformal endomorphism of a free module is called 
		\emph{diagonalizable} if its matrix is diagonal in some basis.
		A \emph{conformal torus} of a finite free {\it LCA} $\mathcal L$
		is an abelian subalgebra $\mathcal T$ of $\operatorname{CDer}(\mathcal L)$ whose elements are diagonalizable. 
		A conformal torus is said to be {\it maximal} if it is not strictly contained  in any other torus. We refer to conformal tori simply as tori.
	\end{definition}
	
	Let $s$ and $t$ be conformal endomorphisms of a free module $\mathcal V$. The conformal linearity   leads that $s_\lambda t_\mu =t_\mu s_\lambda$ if and only if
	\begin{equation}\label{to:commuting}
		s(\partial,\lambda)t(\partial+\lambda,\mu)=t(\partial,\mu)s(\partial+\mu,\lambda).
	\end{equation}
	
	\begin{lemma}\label{to:diagonal-entry} Let $s$ be a diagonalizable conformal endomorphisms of a free module $\mathcal V$ satisfying $[s_\lambda s]=0$. Then its diagonal entries in any diagonalizing basis belong to $\C[\lambda]$. 
		
		Furthermore, if 
		$\mathcal V=\mathcal U\oplus_{\C[\partial]}\mathcal W$
		is a decomposition into $s$-invariant submodules, then both 
		$s_{| \mathcal U}$ and $s_{| \mathcal W}$ are diagonalizable.
	\end{lemma}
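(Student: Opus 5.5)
Throughout, I use the matrix convention of \eqref{to:commuting}. If $\{v_1,\dots,v_n\}$ is a $\C[\partial]$-basis of $\mathcal V$, a conformal endomorphism $s$ is described by a matrix $s(\partial,\lambda)$ with entries in $\C[\partial,\lambda]$, via $s_\lambda v_j=\sum_i s_{ij}(\partial,\lambda)v_i$. The hypothesis $[s_\lambda s]=0$ means that $s_\lambda$ and $s_\mu$ commute. By \eqref{to:commuting} with $t=s$ this reads
\[
s(\partial,\lambda)\,s(\partial+\lambda,\mu)=s(\partial,\mu)\,s(\partial+\mu,\lambda).
\]

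\textbf{First assertion.} Fix a diagonalizing basis, so $s_\lambda v_i=d_i(\partial,\lambda)v_i$. The commutation identity then splits entrywise into
\[
d_i(\partial,\lambda)\,d_i(\partial+\lambda,\mu)=d_i(\partial,\mu)\,d_i(\partial+\mu,\lambda)\qquad\text{for each }i.
\]
If $d_i=0$ there is nothing to prove. Otherwise I compare the $\partial$-degrees. Write $d_i=\sum_{k\le N}c_k(\lambda)\partial^k$ with $c_N\neq0$, and suppose $N\ge1$. Set $\mu=0$ to get
\[
d(\partial,\lambda)\,d(\partial+\lambda,0)=d(\partial,0)\,d(\partial,\lambda).
\]
If $d(\partial,0)\neq0$, cancelling $d(\partial,\lambda)$ in the domain $\C[\partial,\lambda]$ gives $d(\partial+\lambda,0)=d(\partial,0)$. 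Hence $d(\partial,0)$ is a constant. If $d(\partial,0)=0$, this step gives no information. In either case I then use the full two-variable identity and look at the leading terms in $\partial$. Expanding $d(\partial+\lambda,\mu)=c_N(\mu)\partial^N+\bigl(Nc_N(\mu)\lambda+c_{N-1}(\mu)\bigr)\partial^{N-1}+\cdots$, I compare the coefficients of $\partial^{2N}$ and of $\partial^{2N-1}$ on both sides.

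The $\partial^{2N}$ coefficients agree trivially. At $\partial^{2N-1}$ the left side is
\[
c_N(\lambda)c_{N-1}(\mu)+c_{N-1}(\lambda)c_N(\mu)+N\lambda\,c_N(\lambda)c_N(\mu),
\]
and the right side is the same expression with $\lambda\leftrightarrow\mu$. Subtracting leaves $N(\lambda-\mu)c_N(\lambda)c_N(\mu)=0$. This forces $c_N=0$, a contradiction. So $N=0$ and $d_i\in\C[\lambda]$. This leading-term computation is the key step. It is a short but careful expansion, and I expect it to be the main technical point.

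\textbf{Second assertion.} Let $\mathcal V=\mathcal U\oplus\mathcal W$ with both summands $s$-invariant, and let $\{v_i\}$ be a diagonalizing basis with weights $d_i(\lambda)\in\C[\lambda]$, by the first part. Group the $v_i$ by equal weight and put $\mathcal V_\chi=\bigoplus_{d_i=\chi}\C[\partial]v_i$, so $\mathcal V=\bigoplus_\chi\mathcal V_\chi$. I claim $\mathcal V_\chi=\ker(s_\lambda-\chi(\lambda))$.

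The inclusion $\subseteq$ is immediate. For $\supseteq$, take $v=\sum f_i(\partial)v_i$. Then
\[
s_\lambda v=\sum_i f_i(\partial+\lambda)\,d_i(\lambda)\,v_i .
\]
So $s_\lambda v=\chi(\lambda)v$ requires, for each $i$,
\[
f_i(\partial+\lambda)\,d_i(\lambda)=\chi(\lambda)\,f_i(\partial).
\]
Suppose $f_i\neq0$. Setting $\lambda=0$ and comparing $\partial$-degrees in the identity shows it forces $f_i$ to be constant and $d_i=\chi$.

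Because $\mathcal U$ and $\mathcal W$ are $s$-invariant, the projections to $\mathcal U$ and $\mathcal W$ commute with $s_\lambda$. Hence these eigen-kernels decompose:
\[
\mathcal V_\chi=(\mathcal V_\chi\cap\mathcal U)\oplus(\mathcal V_\chi\cap\mathcal W).
\]
Each intersection is a direct summand of a free module over the PID $\C[\partial]$, hence free. Choose bases of $\mathcal V_\chi\cap\mathcal U$ over all weights $\chi$. The union is a basis of $\mathcal U=\bigoplus_\chi(\mathcal V_\chi\cap\mathcal U)$, and on each piece $s_\lambda$ acts by the scalar $\chi(\lambda)$. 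So $s|_{\mathcal U}$ is diagonal in this basis, and likewise for $\mathcal W$.
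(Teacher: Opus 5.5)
Your first assertion is correct and is the paper's argument: the coefficient of $\partial^{2N-1}$ gives $N(\lambda-\mu)c_N(\lambda)c_N(\mu)=0$. The detour through $\mu=0$ is not needed.

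The second assertion has a genuine gap. The equality $\mathcal V_\chi=\ker(s_\lambda-\chi(\lambda))$ is false when $\chi\neq0$. The inclusion you call immediate fails, since $s_\lambda(\partial v_i)=(\partial+\lambda)\chi(\lambda)v_i\neq\chi(\lambda)\partial v_i$. Your own computation in fact shows that for $\chi\neq0$ the kernel is only the finite-dimensional $\C$-space $K_\chi=\bigoplus_{d_i=\chi}\C v_i$, not the $\C[\partial]$-module $\mathcal V_\chi$. (Setting $\lambda=0$ only gives $d_i(0)=\chi(0)$; you must compare leading coefficients in $\partial$, and $f_i$ is forced to be constant only when $\chi\neq0$.)

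This is not cosmetic, because your last step then fails. For $\chi\neq0$, an arbitrary $\C[\partial]$-basis of $\mathcal V_\chi\cap\mathcal U$ does not diagonalize $s$. If $v_1,v_2$ both have weight $\chi\neq0$, then $\{v_1+\partial v_2,\ v_2\}$ is a basis of $\C[\partial]v_1\oplus\C[\partial]v_2$, but $s_\lambda(v_1+\partial v_2)=\chi(\lambda)(v_1+\partial v_2)+\lambda\chi(\lambda)v_2$. In general a change of basis $P(\partial)$ turns $\chi(\lambda)I$ into $\chi(\lambda)P(\partial)^{-1}P(\partial+\lambda)$. So ``$s$ acts on $\mathcal V_\chi$ by the scalar $\chi(\lambda)$'' holds only in special bases, and producing such a basis compatible with $\mathcal U\oplus\mathcal W$ is exactly what the lemma asks for.

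The missing idea is that the splitting of each $\mathcal V_\chi$ with $\chi\neq0$ can be achieved by a \emph{constant} change of basis. Since $\pi_{\mathcal U}$ commutes with $s_\lambda$, writing $\pi_{\mathcal U}(v_j)=\sum_i\pi_{ij}(\partial)v_i$ gives $d_i(\lambda)\pi_{ij}(\partial+\lambda)=d_j(\lambda)\pi_{ij}(\partial)$. Comparing leading coefficients forces $\pi_{ij}=0$ unless $d_i=d_j$, and forces $\pi_{ij}\in\C$ when $d_i=d_j\neq0$. Hence:
\begin{itemize}
\item $\pi_{\mathcal U}$ preserves $\mathcal V_\chi$ and restricts to a complex idempotent on $K_\chi$, so $K_\chi=(K_\chi\cap\mathcal U)\oplus(K_\chi\cap\mathcal W)$;
\item a $\C$-basis of $K_\chi$ adapted to this splitting is a $\C[\partial]$-basis of $\mathcal V_\chi$ consisting of eigenvectors, each lying in $\mathcal U$ or in $\mathcal W$.
\end{itemize}
Only on $\mathcal V_0=\ker s_\lambda$, where $s$ vanishes, may you take arbitrary bases of $\mathcal V_0\cap\mathcal U$ and $\mathcal V_0\cap\mathcal W$ (free as direct summands, as you say). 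With this correction your argument coincides with the paper's proof.
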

	
	\begin{proof} Let us choose a diagonalizing basis $v_1,\ldots,v_n$, and write $s_\lambda v_i=f_i(\partial,\lambda)v_i,$ $f_i(\partial,\lambda)\in\C[\partial,\lambda].$ Applying \eqref{to:commuting} with $t=s$ gives $$f_i(\partial,\lambda)f_i(\partial+\lambda,\mu) = f_i(\partial,\mu)f_i(\partial+\mu,\lambda).$$ 
		If $f_i$ has positive $\partial$-degree $m$, with leading coefficient
		$a_m(\lambda)\ne0$, comparison of the coefficients of
		$\partial^{2m-1}$ yields $m(\lambda-\mu)a_m(\lambda)a_m(\mu)=0,$ which is a contradiction. Thus, $s_\lambda v_i=a_i(\lambda)v_i$ with $a_i\in\C[\lambda].$

		For the projection $\pi_{\mathcal U}$ onto submodule $\mathcal U$ we have $\pi_{\mathcal U}s_\lambda=s_\lambda\pi_{\mathcal U}$. Writing $\pi_{\mathcal U}(v_j)=\sum\limits_i \pi_{ij}(\partial)v_i,$ we obtain
		\begin{equation}\label{to:projection-entry}
			a_i(\lambda)\pi_{ij}(\partial+\lambda)=a_j(\lambda)\pi_{ij}(\partial).
		\end{equation}
		If $\pi_{ij}\ne0$, comparison of the leading coefficients in $\partial$
		gives $a_i=a_j$. Hence $\pi_{\mathcal U}$ preserves each summand
		$\mathcal V_a=\bigoplus\limits_{a_i=a}\C[\partial]v_i,$ where $a$ ranges over the distinct polynomials among
		$a_1,\ldots,a_n$.
		
		For $a_i=a_j=a\ne0$, equation~\eqref{to:projection-entry} reduces to $\pi_{ij}(\partial+\lambda)=\pi_{ij}(\partial),$ so $\pi_{ij}\in\C$. Thus the matrix of
		${\pi_{\mathcal U}}_{| \mathcal V_a}$ is a complex idempotent. Then a constant change of basis brings it in the form $\operatorname{diag}(I_r,0)$ and the resulting basis vectors also satisfy $s_\lambda v=a(\lambda)v$ and split into bases of
		$\pi_{\mathcal U}(\mathcal V_a)$ and
		$(\operatorname{id}-\pi_{\mathcal U})(\mathcal V_a)$.
		
		On $\mathcal V_0$, the operator $s$ vanishes, so any
		$\C[\partial]$-bases of the free modules
		$\pi_{\mathcal U}(\mathcal V_0)$ and
		$(\operatorname{id}-\pi_{\mathcal U})(\mathcal V_0)$
		diagonalize the corresponding restrictions. Finally, since $\pi_{\mathcal U}$ preserves every $\mathcal V_a$, we get 
		$$
		\mathcal U
		=
		\bigoplus_a\pi_{\mathcal U}(\mathcal V_a),
		\qquad
		\mathcal W
		=
		\bigoplus_a
		(\operatorname{id}-\pi_{\mathcal U})(\mathcal V_a).
		$$
		Combining the bases constructed above proves that both restrictions
		are diagonalizable.
	\end{proof}

	Let $\mathcal L$ be a free {\it LCA} of rank $n$ with basis $e_1,\ldots,e_n$. Suppose that $s\in\operatorname{Cend}(\mathcal L)$ satisfies $s_\lambda e_i=a_i(\lambda)e_i,$ $a_i(\lambda)\in\mathbb C[\lambda],$
	and write 
	$[e_i{}_\mu e_j]=\sum\limits_{k=1}^n f_{ij}^k(\partial,\mu)e_k.$
	The conformal Leibniz identity shows that $t$ is a conformal derivation of \(\mathcal L\) if and only if the identity
	\begin{equation}\label{to:polynomial-identity}
		a_k(\lambda)f_{ij}^k(\partial+\lambda,\mu)
		=a_i(\lambda)f_{ij}^k(\partial,\mu+\lambda) +
		a_j(\lambda)f_{ij}^k(\partial,\mu),\qquad 1\leq i,j,k\leq n
	\end{equation}
	holds.
	
	\begin{proposition}\label{to:constant-weight-reduction}
		Let $\mathcal{L}$ be a free {\it LCA} and 
		$0\neq s\in \operatorname{CDer}(\mathcal{L})$ be diagonalizable and satisfies $[s_\lambda s]=0$. Then $\mathcal L$ admits a nonzero conformal derivation whose matrix in a suitable basis is diagonal with constant entries.
	\end{proposition}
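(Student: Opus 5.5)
By Lemma~\ref{to:diagonal-entry}, there is a basis $e_1,\ldots,e_n$ of $\mathcal L$ with $s_\lambda e_i=a_i(\lambda)e_i$ and $a_i\in\C[\lambda]$. Write $[e_i{}_\mu e_j]=\sum_k f_{ij}^k(\partial,\mu)e_k$. Since $s$ is a conformal derivation, identity \eqref{to:polynomial-identity} holds for all $i,j,k$. The plan is to extract from this identity a family of constant weights $c_1,\ldots,c_n$, not all zero, such that
\[
c_k f_{ij}^k=(c_i+c_j)f_{ij}^k\qquad\text{for all }i,j,k .
\]
This is exactly \eqref{to:polynomial-identity} for constant $a_i$. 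Then the endomorphism $d_\lambda e_i=c_ie_i$ is a nonzero conformal derivation whose matrix is constant diagonal.

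\emph{Step 1.} Fix $i,j,k$ with $f_{ij}^k\neq0$ and compare top-degree parts in \eqref{to:polynomial-identity}. Let $m$ be the $\partial$-degree of $f=f_{ij}^k$ and write $f=g(\mu)\partial^m+\cdots$. The $\partial^m$ coefficient gives
\[
a_k(\lambda)g(\mu)=a_i(\lambda)g(\mu+\lambda)+a_j(\lambda)g(\mu).
\]
This relation holds in $\C[\lambda,\mu]$ with $g\neq0$.

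\emph{Step 2.} Set $\lambda=0$ in the relation to get $a_k(0)=a_i(0)+a_j(0)$. This holds whenever $f_{ij}^k\neq0$. We still need $c_k f_{ij}^k=(c_i+c_j)f_{ij}^k$ as a polynomial identity, and this is automatic once the scalar relation holds on the support. So $c_i=a_i(0)$ works provided some $a_i(0)\neq0$.

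\emph{Step 3, the main obstacle.} Some $a_i$ is nonzero, but possibly all constant terms $a_i(0)$ vanish. In that case use the lowest-order coefficient instead. Let $r\ge 1$ be the minimal $\lambda$-order among the nonzero $a_i$, and set $c_i$ equal to the coefficient of $\lambda^r$ in $a_i$. Then at least one $c_i$ is nonzero.

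Expand \eqref{to:polynomial-identity} to order $\lambda^r$. Every $a_l$ is divisible by $\lambda^r$, so the shifts $f(\partial+\lambda,\mu)$ and $f(\partial,\mu+\lambda)$ contribute only at order $\lambda^{r+1}$ or higher. The $\lambda^r$ coefficient is therefore
\[
c_kf_{ij}^k(\partial,\mu)=c_if_{ij}^k(\partial,\mu)+c_jf_{ij}^k(\partial,\mu).
\]
This is exactly the constant-weight identity.

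This argument also covers the case $r=0$, so Step 2 is subsumed. The proof is to divide \eqref{to:polynomial-identity} by $\lambda^r$ and set $\lambda=0$. The last thing to check is that the constant diagonal map respects sesquilinearity, i.e.\ that it is a conformal endomorphism. This is immediate, since $d_\lambda(p(\partial)e_i)=p(\partial+\lambda)c_ie_i$.
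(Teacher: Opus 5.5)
The gap is your claim that $c_kf_{ij}^k=(c_i+c_j)f_{ij}^k$ ``is exactly \eqref{to:polynomial-identity} for constant $a_i$.'' It is not. For $d_\lambda e_i=c_ie_i$ the conformal Leibniz rule still shifts the structure polynomials: $d_\lambda(p(\partial)e_k)=p(\partial+\lambda)c_ke_k$, and the term $[(d_\lambda e_i)_{\lambda+\mu}e_j]$ replaces $\mu$ by $\mu+\lambda$. The condition you must verify is therefore
\[
c_kf_{ij}^k(\partial+\lambda,\mu)=c_if_{ij}^k(\partial,\mu+\lambda)+c_jf_{ij}^k(\partial,\mu).
\]
Dividing the identity for $s$ by $\lambda^r$ and setting $\lambda=0$ gives only the value of this at $\lambda=0$, i.e.\ the weight-addition rule on the support. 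Expanding in $\lambda$, the identity also requires $c_k(f_{ij}^k)_\partial=c_i(f_{ij}^k)_\mu$, $c_k(f_{ij}^k)_{\partial\partial}=c_i(f_{ij}^k)_{\mu\mu}$, and so on (compare \eqref{eq5.71}). Additivity does not give these. Take $f=(q+1)\partial+(2q+3)\mu$, as for $[e_1{}_\mu e_2]$ in $\mathfrak B_n(q)$, with $(c_i,c_j,c_k)=(1,2,3)$. Then the coefficient of $\lambda$ forces $3(q+1)=2q+3$, which fails for $q\geq1$. The orders $\lambda^{r+1},\lambda^{r+2},\dots$ of the identity for $s$, where the shifts actually enter, are exactly what you discarded. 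They also involve the higher coefficients $a_{l,r+1},a_{l,r+2},\dots$, so the shifted identity for $c$ cannot be read off from a single order.

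Your vector $c$ does in fact define a derivation, but proving this needs the key step of the paper's argument. Take a triple with $f=f_{ij}^k$ nonconstant (the constant case is immediate), and let $f_m$ be its top homogeneous component in $(\partial,\mu)$. Comparing components of total degree $m$ and $m-1$ in \eqref{to:polynomial-identity} gives $a_k=a_i+a_j$ and $a_i\bigl((f_m)_\partial-(f_m)_\mu\bigr)+a_j(f_m)_\partial=0$. The linear map $(x,y)\mapsto x\bigl((f_m)_\partial-(f_m)_\mu\bigr)+y(f_m)_\partial$ is nonzero, so all coefficient pairs $(a_{i,r},a_{j,r})$ lie on one line. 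Hence either $a_i=a_j=a_k=0$, or $(a_i,a_j,a_k)(\lambda)=\theta(\lambda)v$ with $v\in\C^3$ and $\theta\neq0$. Cancelling $\theta(\lambda)$ in \eqref{to:polynomial-identity} shows that $v$, and hence every coefficient triple $(a_{i,r},a_{j,r},a_{k,r})$, satisfies the full shifted identity. Only then does your lowest-order vector (indeed any nonzero coefficient vector) give the required constant diagonal derivation.
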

	\begin{proof} Choose a diagonalizing basis $e_1,\ldots,e_n$ of $s$. By Lemma~\ref{to:diagonal-entry} one can write $s_\lambda e_i=a_i(\lambda)e_i$, $a_i(\lambda)=\sum\limits_{r\geq0}a_{i,r}\lambda^r.$
		
		We claim that, for every $r\geq0$ and every triple $(i,j,k)$,
		the corresponding coefficients satisfy
		\begin{equation}\label{to:coefficient-weight-identity}
			a_{k,r}f_{ij}^k(\partial+\lambda,\mu)
			=
			a_{i,r}f_{ij}^k(\partial,\mu+\lambda)
			+
			a_{j,r}f_{ij}^k(\partial,\mu).
		\end{equation}
		
		Fix $(i,j,k)$. If $f_{ij}^k=0$, the claim is immediate.
		If $f_{ij}^k$ is a nonzero constant, then 
		\eqref{to:polynomial-identity} gives $a_k=a_i+a_j$, and hence
		$a_{k,r}=a_{i,r}+a_{j,r}$ for every $r\geq0$. Thus
		\eqref{to:coefficient-weight-identity} also holds in this case.
		
		Suppose now that $f_{ij}^k$ is nonconstant. Write $f_{ij}^k=\sum\limits_{p=0}^{m}(f_{ij}^k)_p,$ where $(f_{ij}^k)_p$ is homogeneous of total degree $p$ in
		$(\partial,\mu)$, with $(f_{ij}^k)_m\ne0$ and $m>0$.
		Comparing the components of total degree $m$ in
		$(\partial,\mu)$ in \eqref{to:polynomial-identity}, we obtain
		\begin{equation}\label{to:weight-sum}
			a_k=a_i+a_j.
		\end{equation}
		
		The coefficient of $\lambda$ in
		$(f_{ij}^k)_m(\partial+\lambda,\mu)$ is either zero or homogeneous of total degree $m-1$ in $(\partial,\mu)$; the same holds for $(f_{ij}^k)_m(\partial,\mu+\lambda)$. Consequently, the component of degree $m-1$ in $(\partial,\mu)$ in
		\eqref{to:polynomial-identity} is
		$$a_k\left((f_{ij}^k)_{m-1} +\lambda\bigl((f_{ij}^k)_m\bigr)_\partial
		\right) = a_i\left( (f_{ij}^k)_{m-1} +\lambda\bigl((f_{ij}^k)_m\bigr)_\mu \right) + a_j(f_{ij}^k)_{m-1}.$$
		
		By \eqref{to:weight-sum}, the terms involving
		$(f_{ij}^k)_{m-1}$ cancel. Cancelling $\lambda$ then gives
		$a_k\bigl((f_{ij}^k)_m\bigr)_\partial=a_i\bigl((f_{ij}^k)_m\bigr)_\mu.$ Substituting $a_k=a_i+a_j$, we obtain
		\begin{equation}\label{to:leading-derivative-relation}
			a_i\left(
			\bigl((f_{ij}^k)_m\bigr)_\partial
			-
			\bigl((f_{ij}^k)_m\bigr)_\mu
			\right)
			+
			a_j\bigl((f_{ij}^k)_m\bigr)_\partial
			=0.
		\end{equation}
		
		Consider the $\mathbb C$-linear map
		$F:\mathbb C^2\longrightarrow\mathbb C[\partial,\mu]$ defined by
		\[
		F(x,y)
		=
		x\left(
		\bigl((f_{ij}^k)_m\bigr)_\partial
		-
		\bigl((f_{ij}^k)_m\bigr)_\mu
		\right)
		+
		y\bigl((f_{ij}^k)_m\bigr)_\partial.
		\]
		Since $(f_{ij}^k)_m$ has positive degree, its two partial
		derivatives cannot both vanish. Thus $F\ne0$, and therefore
		$\dim_{\mathbb C}\ker F\leq1$.
		
		Comparing coefficients of $\lambda$ in
		\eqref{to:leading-derivative-relation} gives
		\[
		a_{i,r}\left(
		\bigl((f_{ij}^k)_m\bigr)_\partial
		-
		\bigl((f_{ij}^k)_m\bigr)_\mu
		\right)
		+
		a_{j,r}\bigl((f_{ij}^k)_m\bigr)_\partial
		=0
		\]
		for every $r\geq0$. Hence
		$(a_{i,r},a_{j,r})\in\ker F$ for every $r\geq0$.
		
		If all these pairs vanish, then $a_i=a_j=0$, and
		\eqref{to:weight-sum} implies $a_k=0$. In this case, every
		coefficient triple is zero, so
		\eqref{to:coefficient-weight-identity} holds trivially. Otherwise, choose $r_0$ such that
		$(a_{i,r_0},a_{j,r_0})\ne(0,0)$. Since
		$\dim_{\mathbb C}\ker F\leq1$, for each $r\geq0$ there is a
		scalar $c_r\in\mathbb C$ such that $(a_{i,r},a_{j,r})
		=c_r(a_{i,r_0},a_{j,r_0}).$ By \eqref{to:weight-sum}, we get
		$$a_{k,r}=a_{i,r}+a_{j,r}=c_r(a_{i,r_0}+a_{j,r_0})=c_r a_{k,r_0}.$$
		Thus, setting $q(\lambda)=\sum\limits_{r\geq0}c_r\lambda^r$, we obtain $\bigl(a_i(\lambda),a_j(\lambda),a_k(\lambda)\bigr)
		=q(\lambda)\bigl(a_{i,r_0},a_{j,r_0},a_{k,r_0}\bigr).$
		Substituting this into \eqref{to:polynomial-identity} yields
		\[
		q(\lambda)\Bigl(
		a_{k,r_0}f_{ij}^k(\partial+\lambda,\mu)
		-
		a_{i,r_0}f_{ij}^k(\partial,\mu+\lambda)
		-
		a_{j,r_0}f_{ij}^k(\partial,\mu)
		\Bigr)=0.
		\]
		Since $q\ne0$ (because of $c_{r_0}=1$),
		we may cancel $q(\lambda)$. Thus the coefficient triple
		corresponding to $r_0$ satisfies
		\eqref{to:coefficient-weight-identity}. Every other coefficient
		triple is a scalar multiple of this one, so
		\eqref{to:coefficient-weight-identity} holds for every $r\geq0$.
		This proves the claim.
		
		For each $r\geq0$, define a conformal endomorphism $d^{(r)}$ by
		$d^{(r)}_\lambda e_i=a_{i,r}e_i,$ $1\leq i\leq n.$ Then By \eqref{to:coefficient-weight-identity}, we have
		$d^{(r)}\in\operatorname{CDer}(\mathcal L)$ for every $r\geq0$.
		Finally, since $s\ne0$, at least one coefficient vector
		$(a_{1,r},\ldots,a_{n,r})$ is nonzero. Choosing such an $r$
		and setting $d=d^{(r)}$ gives the required nonzero conformal
		derivation with diagonal constant matrix.
	\end{proof}
	
	Similar as in \cite{DM2012} for a torus $\mathcal T$ of a free {\it LCA} $\mathcal L$ we set 
	$$\mathcal L_\alpha=\operatorname{Span}_{\C[\partial]}\{v\in\mathcal L\mid t_\lambda v=\alpha_t(\lambda)v\text{ for every }t\in\mathcal T\}.$$
	If $\mathcal L_\alpha\neq 0$, then $\alpha$ is a {\it weight} for $\mathcal{L}$ on $\mathcal{T}.$ We denote the set of weights by $W$.
	By Theorem~\ref{to:simultaneous}, every weight in $W$ occurs
	among the weights of a common diagonalizing basis of $\mathcal L$.
	Hence $|W|\leq\operatorname{rank}(\mathcal L)$.
	
	\begin{corollary}\label{to:weight-decomposition}
		One has $\mathcal L=\bigoplus\limits_{\alpha\in W}\mathcal L_\alpha$ and $[{\mathcal L_\alpha}\cdot{\mathcal L}_\beta]\subseteq\mathcal L_{\alpha+\beta}.$
	\end{corollary}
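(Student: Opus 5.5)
The plan is to take as given the simultaneous diagonalization result referred to just above (Theorem~\ref{to:simultaneous}). It supplies a $\C[\partial]$-basis $e_1,\ldots,e_n$ of $\mathcal L$ in which every $t\in\mathcal T$ acts diagonally, $t_\lambda e_i=\alpha^{(i)}_t(\lambda)e_i$. By Lemma~\ref{to:diagonal-entry} the entries lie in $\C[\lambda]$: since $\mathcal T$ is abelian, $[t_\lambda t]=0$. Each basis vector $e_i$ is then a common eigenvector, so it lies in $\mathcal L_{\alpha^{(i)}}$, where $\alpha^{(i)}:t\mapsto\alpha^{(i)}_t$. Hence $\mathcal L=\sum_{\alpha\in W}\mathcal L_\alpha$.

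For directness, I would first show that $\mathcal L_\alpha=\bigoplus_{i:\alpha^{(i)}=\alpha}\C[\partial]e_i$. Take a common eigenvector $v=\sum_i p_i(\partial)e_i$ of weight $\alpha$. Applying $t_\lambda$ and using conformal linearity gives
\[
\sum_i p_i(\partial+\lambda)\alpha^{(i)}_t(\lambda)e_i=\alpha_t(\lambda)\sum_i p_i(\partial)e_i .
\]
Suppose $p_i\neq0$. Compare the leading $\partial$-coefficients, exactly as in the proof of Lemma~\ref{to:diagonal-entry}: this forces $\alpha^{(i)}_t=\alpha_t$ for every $t$. So $v$ lies in the span of those $e_i$ with $\alpha^{(i)}=\alpha$. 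The reverse inclusion is clear. Distinct weights therefore correspond to disjoint sets of basis vectors, and the sum is direct because the $e_i$ form a basis.

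For the bracket rule, take common eigenvectors $u\in\mathcal L_\alpha$ and $v\in\mathcal L_\beta$. The conformal derivation identity, together with sesquilinearity $[(t_\lambda u)_{\lambda+\mu}v]=\alpha_t(\lambda)[u_{\lambda+\mu}v]$, yields
\[
t_\lambda[u_\mu v]=\alpha_t(\lambda)[u_{\lambda+\mu}v]+\beta_t(\lambda)[u_\mu v].
\]
The extra shift $\mu\mapsto\lambda+\mu$ is the main obstacle: it means $[u_\mu v]$ need not be an eigenvector. To handle it, I would expand $[u_\mu v]=\sum_k f_k(\partial,\mu)e_k$ in the diagonalizing basis and use the resulting identity for each $k$. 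This is identity~\eqref{to:polynomial-identity}, with $a_i,a_j,a_k$ replaced by $\alpha_t,\beta_t,\alpha^{(k)}_t$.

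Now argue as in the proof of Proposition~\ref{to:constant-weight-reduction}. Suppose $f_k\neq0$, and compare the components of top total degree in $(\partial,\mu)$. Since $\lambda$-shifts only lower the $(\partial,\mu)$-degree, this gives $\alpha^{(k)}_t=\alpha_t+\beta_t$ for every $t$, as in \eqref{to:weight-sum}. Hence every $e_k$ that occurs in any $\mu$-coefficient of $[u_\mu v]$ lies in $\mathcal L_{\alpha+\beta}$.

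Finally, I would extend from common eigenvectors to all of $\mathcal L_\alpha$ and $\mathcal L_\beta$. These modules are $\C[\partial]$-spans of such eigenvectors, and by sesquilinearity $[\mathcal L_\alpha\cdot\mathcal L_\beta]$ is generated by the coefficients of brackets of eigenvectors. Together with $\mathcal L_{\alpha+\beta}$ being a submodule (zero if $\alpha+\beta\notin W$), this gives $[\mathcal L_\alpha\cdot\mathcal L_\beta]\subseteq\mathcal L_{\alpha+\beta}$.
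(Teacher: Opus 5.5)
Your proposal is correct and follows the paper's proof. Like the paper, you take a common diagonalizing basis from Theorem~\ref{to:simultaneous}, group the basis vectors by weight, and obtain the bracket rule from the top-degree comparison behind \eqref{to:weight-sum} followed by sesquilinearity; your leading $\partial$-coefficient check that $\mathcal L_\alpha=\bigoplus_{\alpha^{(i)}=\alpha}\C[\partial]e_i$ fills in a step the paper leaves implicit in ``grouping the basis vectors by their weights.''
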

	\begin{proof} By Theorem~\ref{to:simultaneous}, choose a common diagonalizing basis $e_1,\ldots,e_n$ with weights $\alpha_1,\ldots,\alpha_n$. Grouping the basis vectors by their weights gives $\mathcal L=\bigoplus\limits_{\alpha\in W}\mathcal L_\alpha$.
		
		If structure constant $f_{ij}^k\ne0$, the argument yielding \eqref{to:weight-sum} gives $(\alpha_k)_t=(\alpha_i)_t+(\alpha_j)_t,$ $t\in\mathcal T,$ so $\alpha_k=\alpha_i+\alpha_j$.
		The bracket inclusion now follows by conformal sesquilinearity.
	\end{proof}
	
	We now establish simultaneous diagonalization for abelian
	submodules of $\operatorname{Cend}(\mathcal V)$ whose elements
	are diagonalizable, together with a bound on their rank.

	\begin{theorem}\label{to:simultaneous} Let $\mathcal V$ be a free module of rank $n$, and let $\mathcal S$ be a submodule of $\operatorname{Cend}(\mathcal V)$ whose elements are pairwise commute and diagonalizable. Then there exists a basis of $\mathcal V$ that simultaneously diagonalizes every element of $\mathcal S$ and $\operatorname{rank}(\mathcal S)\leq n$.
	\end{theorem}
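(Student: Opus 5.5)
Since $\operatorname{Cend}(\mathcal V)$ need not be finitely generated in general, I first reduce to finitely many generators: $\mathcal S$ is a submodule of the module of conformal endomorphisms of a free rank-$n$ module, which is identified with $n\times n$ matrices over $\C[\partial,\lambda]$ with $\partial$ acting as $-\lambda$. Rather than rely on Noetherianity there, I argue by induction on $n$ with a splitting argument that only ever uses one element at a time, so finiteness of $\mathcal S$ is not needed for the diagonalization.

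The first step is to fix a nonzero element $s\in\mathcal S$ (if every element acts by scalars, or $\mathcal S=0$, any basis works). Commutation with itself gives $[s_\lambda s]=0$, so by Lemma~\ref{to:diagonal-entry}, in a diagonalizing basis $s_\lambda v_i=a_i(\lambda)v_i$ with $a_i\in\C[\lambda]$. Group the basis as $\mathcal V=\bigoplus_a\mathcal V_a$ over the distinct values $a$. Next I show every $t\in\mathcal S$ preserves each $\mathcal V_a$: writing $t_\mu v_j=\sum_i t_{ij}(\partial,\mu)v_i$, identity \eqref{to:commuting} gives
\[a_i(\lambda)\,t_{ij}(\partial+\lambda,\mu)=a_j(\lambda)\,t_{ij}(\partial,\mu),\]
and comparing leading $\partial$-coefficients forces $a_i=a_j$ whenever $t_{ij}\neq0$, exactly as in the proof of Lemma~\ref{to:diagonal-entry}. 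If there are at least two distinct $a$, each $\mathcal V_a$ has rank $<n$, and the restrictions of elements of $\mathcal S$ to $\mathcal V_a$ remain pairwise commuting; they remain diagonalizable by the second part of Lemma~\ref{to:diagonal-entry}, applied to the invariant decomposition $\mathcal V_a\oplus\bigoplus_{b\ne a}\mathcal V_b$. Induction then diagonalizes $\mathcal S|_{\mathcal V_a}$ simultaneously, and the union of these bases works for $\mathcal V$.

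The remaining case, which I expect to be the main obstacle, is when every element of $\mathcal S$ acts on $\mathcal V$ by a single scalar polynomial. Here I must argue that then all elements are simultaneously diagonal in every basis. If some $s$ has only one value $a$, it is $a(\lambda)\operatorname{id}$, which is diagonal in every basis; so either some element has two distinct values (handled above) or every element is scalar, and in the latter case any basis diagonalizes $\mathcal S$. This closes the induction, though one must verify that the element chosen to split can be taken to be nonscalar whenever one exists; that choice is legitimate.

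For the rank bound, in the common basis $e_1,\dots,e_n$ each $t\in\mathcal S$ has the form $t_\lambda e_i=a_i^t(\lambda)e_i$ with $a^t_i\in\C[\lambda]$ by Lemma~\ref{to:diagonal-entry}. The map $t\mapsto(a^t_1(\lambda),\dots,a^t_n(\lambda))$ is an injective $\C[\partial]$-module map into $\C[\lambda]^n$ with $\partial$ acting as multiplication by $-\lambda$, i.e.\ into a free module of rank $n$. Since a submodule of a free rank-$n$ module over a PID is free of rank at most $n$, we get $\operatorname{rank}(\mathcal S)\le n$.
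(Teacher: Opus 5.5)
Your splitting step and your rank bound are the paper's argument. The final ``all scalar'' case, however, contains a genuine error, and that case is where the real content of the theorem lies.

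\textbf{The error.} An element $s$ with $s_\lambda v_i=a(\lambda)v_i$ for all $i$ is not $a(\lambda)\operatorname{id}$. Conformal endomorphisms satisfy $s_\lambda(p(\partial)v)=p(\partial+\lambda)s_\lambda v$, so
$s_\lambda\bigl(\sum_i p_i(\partial)v_i\bigr)=a(\lambda)\sum_i p_i(\partial+\lambda)v_i$.
In fact $a(\lambda)\operatorname{id}$ is not a conformal endomorphism unless $a=0$. After a change of basis with matrix $P(\partial)$, the matrix of $s$ becomes $a(\lambda)P(\partial)^{-1}P(\partial+\lambda)$, which is diagonal only for special $P$. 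For example, take $n=2$, $s_\lambda e_i=e_i$, and the new basis $f_1=e_1+\partial e_2$, $f_2=e_2$. Then $s_\lambda f_1=f_1+\lambda f_2$, so $s$ is not diagonal in the basis $f$.

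So being scalar is a basis-dependent property. Your claims that ``any basis works'' when every element is scalar, and that such an $s$ is ``diagonal in every basis'', are false. In the remaining case, two commuting elements could a priori be scalar in different bases, and you have given no reason why a common basis exists.

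\textbf{How to close it.} You need an argument like the paper's, which uses commutativity seriously.
\begin{enumerate}
\item Fix $0\ne t\in\mathcal S$ and a basis with $t_\lambda e_i=a(\lambda)e_i$, $a\ne0$.
\item For $s\in\mathcal S$ with matrix $S(\partial,\lambda)$ in this basis, \eqref{to:commuting} gives $a(\mu)S(\partial,\lambda)=a(\mu)S(\partial+\mu,\lambda)$. Hence $S=S_s(\lambda)$ is independent of $\partial$.
\item Commutativity then reads $S_s(\lambda)S_r(\mu)=S_r(\mu)S_s(\lambda)$. So all coefficient matrices commute and share an eigenvector $0\ne v\in\operatorname{span}_{\C}\{e_1,\ldots,e_n\}$, with $s_\lambda v=b_s(\lambda)v$.
\item The element $u=a(-\partial)s-b_s(-\partial)t\in\mathcal S$ satisfies $u_\lambda v=a(\lambda)b_s(\lambda)v-b_s(\lambda)a(\lambda)v=0$.
\item In this case every nonzero element of $\mathcal S$ is scalar with nonzero entry in some basis, hence injective. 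Therefore $u=0$.
\item Cancelling $a(\lambda)$ gives $S_s(\lambda)=b_s(\lambda)I_n$.
\end{enumerate}
Thus the diagonalizing basis of $t$ diagonalizes all of $\mathcal S$. This replaces your final case; your splitting step and rank bound can stay as written.
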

	
	\begin{proof}
		We prove simultaneous diagonalization by induction on $n$.
		The assertion is immediate if $n\leq1$ or $\mathcal S=0$.
		
		For $t\in\mathcal S$, choose a diagonalizing basis
		$e_1,\ldots,e_n$. By Lemma~\ref{to:diagonal-entry}, we have $t_\lambda e_i=a_i(\lambda)e_i$ for $a_i\in\C[\lambda].$ The leading-coefficient argument in the proof of
		Lemma~\ref{to:diagonal-entry} shows that each weight summand
		$\mathcal V[a]=\bigoplus\limits_{a_i=a}\C[\partial]e_i$
		and its complementary sum are $\mathcal S$-invariant.
		The same lemma then implies that every element of $\mathcal S$
		restricts to a diagonalizable conformal endomorphism of
		$\mathcal V[a]$.
		If some $t\in\mathcal S$ has at least two distinct weights,
		each summand has rank less than $n$. Applying the induction
		hypothesis to the images of $\mathcal S$ under restriction
		and combining the resulting bases proves simultaneous
		diagonalization in this case.
		
		Therefore we can assume that every nonzero element of
		$\mathcal S$ has a scalar diagonal form $c(\lambda)I_n$,
		where $c\in\C[\lambda]\setminus\{0\}$.
		
		Let fix $0\ne t\in\mathcal S$ and a basis in which
		$t_\lambda e_i=a(\lambda)e_i$, with $a\ne0$.
		Then for every $s\in\mathcal S$ the commutativity with $t$ implies that the matrix of $s$ is independent of $\partial$; denote it by $S_s(\lambda)$.
		Moreover, 
		$$S_s(\lambda)S_r(\mu)=S_r(\mu)S_s(\lambda)
		\qquad r, s\in\mathcal S.$$
		Thus all coefficient matrices commute and have a common
		eigenvector $0\ne v\in\operatorname{span}_{\C}\{e_1,\ldots,e_n\}.$ 
		For each $s\in\mathcal S$, write
		$s_\lambda v=b_s(\lambda)v$, with $b_s\in\C[\lambda]$. 
		Conformal sesquilinearity gives $u_\lambda v=0$ for $u=a(-\partial)s-b_s(-\partial)t\in\mathcal S.$ Since every nonzero element of $\mathcal S$ has zero kernel, we obtain $u=0$. Consequently, $S_s(\lambda)=b_s(\lambda)I_n$.
		Hence the chosen basis diagonalizes every element of
		$\mathcal S$.
		
		Finally, in a common diagonalizing basis $e_1,\ldots,e_n$,
		write $s_\lambda e_i=c_i^s(\lambda)e_i,$ $c_i^s\in\C[\lambda],$
		where independence of $\partial$ follows from
		Lemma~\ref{to:diagonal-entry}. The map $\varphi:\mathcal S\rightarrow\C[\lambda]^n,$ $s\mapsto
		\bigl(c_1^s(\lambda),\ldots,c_n^s(\lambda)\bigr),$
		is injective. 
		
		It is also $\C[\partial]$-linear when $\partial$ acts on
		$\C[\lambda]$ by multiplication by $-\lambda$.
		Since $\C[\lambda]^n$ is a free $\C[\partial]$-module of rank $n$
		and $\C[\partial]$ is a PID, its submodule $\varphi(\mathcal S)$,
		and hence $\mathcal S$, is free. Therefore,
		$\operatorname{rank}(\mathcal S)\leq n$. 
	\end{proof}
	
	For a maximal torus, the constant-weight reduction yields
	a basis with constant diagonal entries, as the following
	corollary shows.
	
	\begin{corollary}\label{to:maximal-constant-basis}
		Let $\mathcal T$ be a maximal torus of a free
		{\it LCA} $\mathcal L$ of rank $n$. Then $\mathcal T$ is free of rank at most $n$ and admits a basis whose elements have constant diagonal matrices in a common basis of $\mathcal L$.
	\end{corollary}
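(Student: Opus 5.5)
Apply Theorem~\ref{to:simultaneous} to $\mathcal S=\mathcal T$. Since $\mathcal T$ is an abelian subalgebra of $\operatorname{CDer}(\mathcal L)$, its elements satisfy $[s_\lambda t]=0$, so they pairwise commute as conformal endomorphisms; they are diagonalizable by definition. The theorem then gives a common diagonalizing basis $e_1,\ldots,e_n$ of $\mathcal L$, and its final paragraph shows that $\mathcal T$ embeds $\C[\partial]$-linearly into $\C[\lambda]^n$. Hence $\mathcal T$ is free of rank at most $n$. By Lemma~\ref{to:diagonal-entry}, each $t\in\mathcal T$ acts by $t_\lambda e_i=c_i^t(\lambda)e_i$ with $c_i^t\in\C[\lambda]$.

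Next, decompose each element into constant pieces, as in the proof of Proposition~\ref{to:constant-weight-reduction}. For $t\in\mathcal T$ and $r\ge0$, let $d^{(t,r)}$ be the constant diagonal conformal endomorphism $d^{(t,r)}_\lambda e_i=c^t_{i,r}e_i$, where $c^t_{i,r}$ is the coefficient of $\lambda^r$ in $c^t_i$. The claim \eqref{to:coefficient-weight-identity} in that proof was derived only from the identity \eqref{to:polynomial-identity} for a single diagonal derivation. It therefore shows that every $d^{(t,r)}$ is a conformal derivation.

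Any two constant diagonal matrices in the same basis commute; equation \eqref{to:commuting} is trivially satisfied. Each also commutes with every element of $\mathcal T$, since all of these are diagonal in the same basis with entries depending only on $\lambda$. So the $\C[\partial]$-submodule $\mathcal T'$ of $\operatorname{CDer}(\mathcal L)$ generated by $\mathcal T$ and all $d^{(t,r)}$ consists of commuting, simultaneously diagonal conformal derivations. One still has to check that $\mathcal T'$ is an abelian subalgebra, i.e.\ that the $\lambda$-bracket of diagonal elements vanishes. This is exactly what commutation in the sense of \eqref{to:commuting} encodes, because for $\partial$-independent diagonal matrices the conformal commutator is $s(\lambda)t(\mu)-t(\mu)s(\lambda)=0$. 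Hence $\mathcal T'$ is a torus containing $\mathcal T$, and maximality gives $\mathcal T'=\mathcal T$. In particular every $d^{(t,r)}$ lies in $\mathcal T$.

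Conversely, $t=\sum_r \lambda^r d^{(t,r)}=\sum_r(-\partial)^r d^{(t,r)}$, using $(\partial D)_\lambda=-\lambda D_\lambda$. Thus $\mathcal T$ is generated over $\C[\partial]$ by the constant diagonal elements it contains. Let $T_0=\mathcal T\cap\{\text{constant diagonal}\}$; this is a finite-dimensional $\C$-space, of dimension at most $n$, that generates $\mathcal T$.

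The remaining point, which I expect to be the main obstacle, is showing that a $\C$-basis $d_1,\ldots,d_k$ of $T_0$ is also a $\C[\partial]$-basis of $\mathcal T$. Freeness alone does not give this, so I would prove linear independence over $\C[\partial]$ directly. Suppose $\sum p_j(\partial)d_j=0$. Evaluating on $e_i$ gives $\sum_j p_j(-\lambda)(d_j)_{ii}=0$ for all $i$. Comparing coefficients of each power of $\lambda$ gives $\C$-linear relations among the vectors $((d_j)_{ii})_i$. Since the $d_j$ are $\C$-independent, all $p_j=0$. This completes the plan: the rank bound comes from the embedding into $\C[\lambda]^n$, and the constant basis comes from maximality.
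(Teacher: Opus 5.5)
Your proposal is correct and follows the paper's proof essentially step for step. You use Theorem~\ref{to:simultaneous} for the common basis and the rank bound, and Lemma~\ref{to:diagonal-entry} together with the coefficient claim from Proposition~\ref{to:constant-weight-reduction} to produce the constant pieces. Maximality then absorbs these into $\mathcal T$, the identity $t=\sum_r(-\partial)^r d^{(t,r)}$ gives generation, and comparing $\lambda$-coefficients gives $\C[\partial]$-independence. Your explicit check that adjoining the constant pieces still yields an abelian, simultaneously diagonal subalgebra of $\operatorname{CDer}(\mathcal L)$ just spells out what the paper asserts in one line.
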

	\begin{proof} By Theorem~\ref{to:simultaneous}, we have
		$\operatorname{rank}(\mathcal T)\leq n$, and we may fix a common
		diagonalizing basis $e_1,\ldots,e_n$ of $\mathcal L$.
		Due to Lemma \ref{to:diagonal-entry} we get $t_\lambda e_i=\sum\limits_r a_{i,r}\lambda^r e_i, \ t\in \mathcal T$. The proof of Proposition~\ref{to:constant-weight-reduction} shows that $(d_r)_\lambda e_i=a_{i,r}e_i, \ a_{i,r}\in \C$ defines a conformal derivation. Since $\mathcal T+\C[\partial]d_r$ is again a torus, the maximality of $\mathcal T$ gives $d_r\in\mathcal T$. Since $t=\sum\limits_r(-\partial)^r d_r$, the elements of $\mathcal T$ having constant diagonal matrices generate $\mathcal T$ over $\C[\partial]$. Choose a complex basis of the finite-dimensional space formed by these elements. Comparing coefficients of $\lambda$ shows that this basis is also $\C[\partial]$-linearly independent, and hence forms a basis of $\mathcal T$. 
	\end{proof}
	
	The constant-weight reduction gives a criterion for the existence
	of a nonzero torus in terms of the structure coefficients of
	$\mathcal L$. In a suitable basis, the same identities determine
	the rank of a maximal torus.
	
	\begin{theorem}\label{to:constant-weight-criterion}
		A free {\it LCA} $\mathcal L$ admits a nonzero torus if and only if there exist a basis $e_1,\ldots,e_n$ and $0\ne(w_1,\ldots,w_n)\in\C^n$ such that whenever $f_{i,j}^k\ne0$,
		\begin{equation}\label{eq5.71}
			w_k=w_i+w_j,\qquad w_k(f_{i,j}^k)_\partial=w_i(f_{i,j}^k)_\mu,\qquad w_k(f_{i,j}^k)_{\partial\partial}=w_i(f_{i,j}^k)_{\mu\mu}.
		\end{equation}
		Moreover, for any maximal torus $\mathcal T$, its rank equals the dimension of the solution space of these identities in $\C^n$, relative to a common basis supplied by Corollary~\ref{to:maximal-constant-basis}.
	\end{theorem}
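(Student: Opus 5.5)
The plan is to reduce everything to the polynomial identity \eqref{to:polynomial-identity} with constant weights. Take $a_i(\lambda)=w_i\in\C$ and write $f=f_{ij}^k$. The identity becomes
\[
w_k f(\partial+\lambda,\mu)=w_i f(\partial,\mu+\lambda)+w_j f(\partial,\mu).
\]
Expanding both sides by Taylor's formula in $\lambda$ turns this into the following system:
\begin{itemize}
\item order $0$: $w_k f=(w_i+w_j)f$;
\item order $r\geq1$: $w_k\,\partial_\partial^r f=w_i\,\partial_\mu^r f$ for every $r$.
\end{itemize}
The first step is to show that, under $w_k=w_i+w_j$ (forced when $f\neq0$) and the $r=1$ relation, the relations for $r=2$ already imply all higher ones. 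I would argue this as follows. Set $g=w_kf_\partial-w_if_\mu=0$ and differentiate. From $f_{\partial\partial}$- and $f_{\mu\mu}$-relations together with $w_kf_{\partial\mu}=w_if_{\mu\mu}$ and $w_kf_{\partial\partial}=w_if_{\mu\partial}$ (derivatives of $g$), the second-order condition gives $w_k^2 f_{\partial\partial}=w_kw_if_{\partial\mu}=w_i^2f_{\mu\mu}$. Hence $(w_k-w_i)$-type relations propagate. Explicitly, I would split into cases:
\begin{itemize}
\item $w_k=w_i\neq0$: then $f_\partial=f_\mu$, so $f=F(\partial+\mu)$, and the $r=2$ relation is automatic, as are all $r$.
\item $w_i=0$: then $w_kf_\partial=0$, and all higher relations follow.
\item $w_k=0$ with $w_i\neq0$: then $f_\mu=0$, and all higher relations follow.
\item $w_k\neq w_i$, both nonzero: then $g=0$ gives $f=F(w_i\partial+w_k\mu)$. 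The order-$r$ relation reads $w_kw_i^r=w_iw_k^r$ times $F^{(r)}$. For $r=2$ this forces $F''=0$, i.e. $f$ of degree $\leq1$, and then all $r\geq2$ hold.
\end{itemize}
So \eqref{eq5.71} is exactly equivalent to $d_\lambda e_i=w_ie_i$ being a conformal derivation. This case analysis is the main technical step; I expect the delicate point to be checking the case $w_k\neq w_i$ carefully, including when $f$ is linear with both partials nonzero.

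For the existence statement, I would argue each direction separately.
\begin{itemize}
\item ``If'': the diagonal constant $d$ above is a nonzero conformal derivation. It commutes with itself, since constant diagonal matrices satisfy \eqref{to:commuting}, and it is diagonalizable. Hence $\C[\partial]d$ is a nonzero torus.
\item ``Only if'': given a nonzero torus $\mathcal T$, pick $0\neq s\in\mathcal T$. It is diagonalizable and satisfies $[s_\lambda s]=0$ by abelianness. Proposition~\ref{to:constant-weight-reduction} then yields a nonzero constant diagonal derivation in some basis, and the equivalence above gives \eqref{eq5.71}.
\end{itemize}

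For the rank statement, fix a maximal torus $\mathcal T$ and the common basis from Corollary~\ref{to:maximal-constant-basis}. Let $S\subseteq\C^n$ be the solution space of \eqref{eq5.71} in that basis. Define $V\subseteq\mathcal T$ as the $\C$-space of elements with constant diagonal matrices.
\begin{itemize}
\item Each $w\in S$ gives a constant diagonal derivation $d^w$, and these commute with all of $\mathcal T$, since all are diagonal in the same basis and their entries are independent of $\partial$. Thus $\mathcal T+\C[\partial]d^w$ is a torus, and maximality gives $d^w\in\mathcal T$, so $S\hookrightarrow V$.
\item Conversely, by the equivalence step every element of $V$ has weight vector in $S$, so $V\cong S$.
\end{itemize}
Corollary~\ref{to:maximal-constant-basis} shows that a $\C$-basis of $V$ is a $\C[\partial]$-basis of $\mathcal T$. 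Hence $\operatorname{rank}\mathcal T=\dim_\C V=\dim S$.
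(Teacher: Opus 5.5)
Your proposal is correct and follows essentially the paper's route. Necessity uses Proposition~\ref{to:constant-weight-reduction} together with the order-one and order-two $\lambda$-coefficients of \eqref{to:polynomial-identity}, which are your Taylor terms. Sufficiency uses the same case split on which of $w_i,w_j,w_k$ vanish, with $f$ forced to have degree at most one when $w_iw_jw_k\neq0$ (you get this by first writing $f=F(w_i\partial+w_k\mu)$, which is only a cosmetic difference). The rank count uses maximality to force each solution's constant diagonal derivation into $\mathcal T$; you phrase this through the $\C$-space of constant-diagonal elements of $\mathcal T$ rather than through the module basis $t_1,\ldots,t_s$ and evaluation at $\lambda=0$.
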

	
	\begin{proof}
		Suppose that $\mathcal L$ admits a nonzero torus. By Proposition~\ref{to:constant-weight-reduction}, choose a basis $e_1,\ldots,e_n$ and a nonzero conformal derivation $d_\lambda e_i=w_i e_i$, with $w_i\in\C$. The weight rule gives $w_k=w_i+w_j$ whenever $f_{i,j}^k\ne0$. Taking $a_i(\lambda)=w_i$ in \eqref{to:polynomial-identity}, differentiating once and twice with respect to $\lambda$, and setting $\lambda=0$ gives the remaining identities.
		
		Conversely, suppose that these identities hold. Define $d\in\operatorname{Cend}(\mathcal L)$ by $d_\lambda e_i=w_i e_i$. It suffices to verify \eqref{to:polynomial-identity} with $a_i(\lambda)=w_i$. Let us fix $0\neq f_{i,j}^k$. 
		
		If $w_i=w_j=0$, then $w_k=0$, and the identity is immediate.
		If exactly one of $w_i,w_j,w_k$ vanishes, the relation
		$w_k=w_i+w_j$ and the identity $w_kf_\partial=w_if_\mu$ give
		\[
		f\in
		\begin{cases}
			\C[\mu],& w_i=0,\\
			\C[\partial+\mu],& w_j=0,\\
			\C[\partial],& w_k=0.
		\end{cases}
		\]
		In each case, substitution verifies the required identity.
		
		It remains to consider the case $w_iw_jw_k\ne0$.
		Differentiating $w_kf_\partial=w_if_\mu$ with respect to
		$\partial$ and $\mu$ yields $w_kf_{\partial\partial}=w_if_{\partial\mu}$ and $w_kf_{\partial\mu}=w_if_{\mu\mu}.$ Hence, we get $w_k^2f_{\partial\partial}=w_i^2f_{\mu\mu}.$ Combining this with
		$w_kf_{\partial\partial}=w_if_{\mu\mu}$, we obtain
		$$w_k(w_k-w_i)f_{\partial\partial}=w_kw_jf_{\partial\partial}
		=0.$$ Thus, $f_{\partial\partial}=0$, and the above identities imply 
		$f_{\partial\mu}=f_{\mu\mu}=0$. So, $f$ is linear with respect to $\partial$ and $\mu$. Along with $w_kf_\partial=w_if_\mu$ it gives
		$$f(\partial,\mu)=A(w_i\partial+w_k\mu)+B, \qquad A,B\in\C.$$
		This expression also satisfies \eqref{to:polynomial-identity}.
		Consequently, $d$ is a nonzero conformal derivation. Since its
		matrix is constant and diagonal, $[d_\lambda d]=0$, and
		$\C[\partial]d$ is a nonzero torus.
		
		Finally, let $\mathcal T$ be maximal. By Corollary~\ref{to:maximal-constant-basis}, choose bases $t_1,\ldots,t_s$ of $\mathcal T$ and $e_1,\ldots,e_n$
		of $\mathcal L$ such that $(t_r)_\lambda e_i=w_{i,r}e_i$ with  $w_{i,r}\in\C$ and $1 \leq r \leq s.$ The above argument shows that each vector $W_r:=(w_{1,r},\ldots,w_{n,r})$ satisfies \eqref{eq5.71}. These vectors are linearly independent over $\C$ because $t_1,\ldots,t_s$ form a module basis.
		
		Now let $(w_1,\ldots,w_n)$ be any solution of \eqref{eq5.71}. Define a conformal endomorphism $d$ by
		$d_\lambda e_i=w_i e_i, \ w_i\in \C.$ The sufficiency argument above shows that $d$ is a conformal derivation. Moreover, the equalities $[d_\lambda t_r]=0$ and $[d_\lambda d]=0$ imply that $\mathcal T+\C[\partial]d$ is abelian. Moreover, every element of the submodule $\mathcal T+\C[\partial]d$ is diagonal in the basis $e_1,\ldots,e_n$. Consequently, $\mathcal T+\C[\partial]d$ is a torus containing $\mathcal T$. By maximality of $\mathcal T$, it follows that $d\in\mathcal T$. Hence,  $d=\sum\limits_{r=1}^sp_r(\partial)t_r, \ p_r\in\C[\partial]$. 
		The module action gives
		$$w_i e_i=d_\lambda e_i=\sum\limits_{r=1}^s p_r(-\lambda)(t_r)_\lambda e_i=\left(\sum\limits_{r=1}^s p_r(-\lambda)w_{i,r}\right)e_i.$$
		Comparing the coefficients of $e_i$ and setting $\lambda=0$ yields $(w_1,\ldots,w_n)=\sum\limits_{r=1}^s p_r(0)W_r.$ 
		Since each $p_r(0)$ is a complex number, every solution belongs to the complex span of $W_r, \ 1\le r \le s$. Together with their linear independence, this proves that they form a basis of the solution space. Its dimension is therefore $\operatorname{rank}(\mathcal T)=s$.
	\end{proof}
	
	The following example shows that the conjugacy of maximal tori of finite-dimensional complex nilpotent Lie algebras \cite{Mostow1956} does not extend to conformal tori of finite nilpotent {\it LCA}s.
	
	\begin{example}\label{to:different-ranks}
		The nilpotent {\it LCA} $\operatorname{Cur}(\mathfrak n_4)$,
		where $\mathfrak n_4$ is the Lie algebra of strictly upper triangular $4\times4$ complex matrices, admits maximal tori that are not conjugated.
		
		Indeed, let $g_1=e_{12}, \ g_2=e_{13}, \  g_3=e_{14}, \
		g_4=e_{23}, \ g_5=e_{24}, \ g_6=e_{34},$ where $e_{ij}$ are the standard matrix units. These elements form a
		$\mathbb C[\partial]$-basis of $\operatorname{Cur}(\mathfrak n_4)$,
		and the nonzero $\lambda$-brackets, up to skew-symmetry, are
		$$[g_1{}_\lambda g_4]=g_2,\qquad
		[g_1{}_\lambda g_5]=g_3,\qquad
		[g_2{}_\lambda g_6]=g_3,\qquad
		[g_4{}_\lambda g_6]=g_5.$$
		Relative to this basis, consider the diagonal operators
		$$h_1=\operatorname{diag}(1,1,1,0,0,0), \quad h_2=\operatorname{diag}(-1,0,0,1,1,0), \quad 
		h_3=\operatorname{diag}(0,-1,0,-1,0,1).$$
		A direct verification shows that $\mathcal T=\bigoplus\limits_{i=1}^{3}\mathbb C[\partial]h_i$ is a maximal torus of rank $3$.
		
		If we consider the $\mathbb C[\partial]$-basis
		$$f_1=g_1+\partial g_2, \quad f_2=g_2, \quad f_3=g_3, \quad f_4=g_4, \quad f_5=g_5, \quad f_6=g_6+2\partial g_5,$$ 
		then with respect to this basis, the diagonal operator
		$t=\operatorname{diag}(-2,-2,-1,0,1,1)$ generates a maximal torus
		$\mathcal T'=\mathbb C[\partial]t$ of rank $1$, as can again be checked directly.
		
		Since conjugation preserves $\mathbb C[\partial]$-rank, the maximal tori $\mathcal T'$ and $\mathcal T$ are not conjugate.
	\end{example}

	\section{Null-filiform Lie conformal algebras: classification and maximal tori}\label{nf:section}
	
	We begin this section by introducing the relevant definitions and a family of null-filiform Lie conformal algebras, and then investigate their conformal tori and solvable extensions.
	
	Let $\mathcal N$ be a nilpotent {\it LCA} with nilindex $s+1$. Define $\mathcal{N}_i = \mathcal{N}^i / \mathcal{N}^{i+1},$ $1 \le i \le s,$ and set
	$\operatorname{gr}(\mathcal{N})=\bigoplus\limits_{i=1}^s\mathcal{N}_i.$ Then $[\mathcal{N}_i \cdot \mathcal{N}_j] \subseteq \mathcal{N}_{i+j}$, so that $\operatorname{gr}(N)$ becomes a $\mathbb{Z}$-graded algebra.  We say that $\mathcal{N}$ is {\it naturally graded algebra} if $\operatorname{gr}(\mathcal{N})\cong \mathcal{N}$. 
	
	We call $\mathcal N$ \emph{totally saturated} if every term
	$\mathcal N^k$ of its lower central series is saturated
	in $\mathcal N$.
	
	\begin{proposition}\label{prop6.1}
		Let $\mathcal N$ be a nilpotent Lie conformal algebra that is
		finitely generated as a $\mathbb C[\partial]$-module. The following
		conditions are equivalent:
		\begin{enumerate}
			\item $\mathcal N$ is totally saturated;
			\item $\mathcal N^{k+1}$ is saturated in $\mathcal N^k$ for every
			$k\geq1$;
			\item $\mathcal N^k/\mathcal N^{k+1}$ is free over
			$\mathbb C[\partial]$ for every $k\geq1$.
		\end{enumerate}
		Under these equivalent conditions, $\mathcal N$ is free over
		$\mathbb C[\partial]$.
	\end{proposition}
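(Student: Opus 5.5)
We will prove the cycle (1)$\Rightarrow$(2)$\Rightarrow$(3)$\Rightarrow$(1) and then deduce freeness. The tools are the definition of saturation, the equivalence ``$\mathcal U$ saturated in $\mathcal M$ iff $\mathcal M/\mathcal U$ torsion-free'', and the fact that finite torsion-free modules over the PID $\mathbb C[\partial]$ are free. Throughout we use $\mathcal N^{k+1}\subseteq\mathcal N^k$ and $\mathcal N^{s+1}=0$, where $s+1$ is the nilindex.

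For (1)$\Rightarrow$(2), let $m\in\mathcal N^k$ and $0\ne p$ with $pm\in\mathcal N^{k+1}$. Saturation of $\mathcal N^{k+1}$ in $\mathcal N$ gives $m\in\mathcal N^{k+1}$. Hence $\operatorname{Sat}_{\mathcal N^k}(\mathcal N^{k+1})=\mathcal N^{k+1}$.

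For (2)$\Rightarrow$(3), note that $\mathcal N^k/\mathcal N^{k+1}$ is torsion-free because $\mathcal N^{k+1}$ is saturated in $\mathcal N^k$. It is finitely generated, since $\mathcal N$ is Noetherian. It is therefore free.

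For (3)$\Rightarrow$(1), we show by descending induction on $k$ that $\mathcal N/\mathcal N^k$ is torsion-free. Equivalently, $\mathcal N^k$ is saturated in $\mathcal N$. The case $k=1$ is trivial. For the step, consider the exact sequence
$$0\to\mathcal N^k/\mathcal N^{k+1}\to\mathcal N/\mathcal N^{k+1}\to\mathcal N/\mathcal N^k\to0.$$
An extension of a torsion-free module by a torsion-free module is torsion-free. To see this, suppose $p\bar m=0$ in the middle term. The image of $\bar m$ in $\mathcal N/\mathcal N^k$ is then killed by $p$, so it vanishes. Thus $\bar m$ lies in $\mathcal N^k/\mathcal N^{k+1}$, where $p\bar m=0$ forces $\bar m=0$. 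Actually the induction runs upward in $k$ from $k=1$, which is what this argument uses.

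Finally, under these conditions $\mathcal N^{s+1}=0$ is saturated, so $\operatorname{Tor}(\mathcal N)=\operatorname{Sat}_{\mathcal N}(0)=0$. Then $\mathcal N$ is finite and torsion-free, hence free. Alternatively, by (3) each short exact sequence above splits, so $\mathcal N\cong_{\mathbb C[\partial]}\bigoplus_{k=1}^s\mathcal N^k/\mathcal N^{k+1}$.

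There is no real obstacle here. The only point needing care is the direction of the induction in (3)$\Rightarrow$(1) and the two-out-of-three property for torsion-freeness.
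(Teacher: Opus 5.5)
Your proof is correct and is essentially the paper's argument. Your (1)$\Rightarrow$(2), (2)$\Rightarrow$(3) and freeness steps match the paper's, and your (3)$\Rightarrow$(1) step via $0\to\mathcal N^k/\mathcal N^{k+1}\to\mathcal N/\mathcal N^{k+1}\to\mathcal N/\mathcal N^k\to0$ is the paper's (3)$\Rightarrow$(2) combined with its upward induction for (2)$\Rightarrow$(1), repackaged as the two-out-of-three property for torsion-freeness. Delete the stray phrase ``descending induction'', since the induction runs upward from $k=1$, as you note yourself.
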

	
	\begin{proof} If $\mathcal N$ is totally saturated,
		then
		$$\operatorname{Sat}_{\mathcal N^k}(\mathcal N^{k+1})
		=\mathcal N^k\cap\operatorname{Sat}_{\mathcal N}(\mathcal N^{k+1})=\mathcal N^{k+1}, \quad \mbox{for every} \quad k\geq1.$$
		Thus~$(1)$ implies~$(2)$.
		
		Conversely, assume~$(2)$. We prove by induction on $k$ that
		$\mathcal N^k$ is saturated in $\mathcal N$. The case $k=1$
		is immediate. Suppose the assertion holds for $k$, and let
		$x\in\mathcal N$ satisfy
		$p(\partial)x\in\mathcal N^{k+1}$ for some nonzero $p\in \mathbb C[\partial]$.
		The inclusion $\mathcal N^{k+1}\subseteq\mathcal N^k$ and the
		induction hypothesis imply that $x\in\mathcal N^k$.
		Condition~$(2)$ then gives $x\in\mathcal N^{k+1}$, completing
		the induction and proving~$(1)$.
		
		To prove that~$(2)$ implies~$(3)$, fix $k\geq1$ and write
		$Q_k=\mathcal N^k/\mathcal N^{k+1}$. Since $\mathbb C[\partial]$ is Noetherian
		and $\mathcal N$ is finitely generated, $Q_k$ is finitely
		generated. If $x\in\mathcal N^k$ and $0\neq p\in \mathbb C[\partial]$ satisfy $p(\partial)(x+\mathcal N^{k+1})=0,$
		then $p(\partial)x\in\mathcal N^{k+1}$. By~$(2)$,
		$x\in\mathcal N^{k+1}$, so $Q_k$ is torsion-free.
		The structure theorem for finitely generated modules over a principal ideal domain (see \cite[\S 12.1]{DummitFoote}) therefore implies that $Q_k$ is free.
		
		Conversely, assume~$(3)$. Each $Q_k$ is then torsion-free.
		Consequently, for $x\in\mathcal N^k$ and $0\neq p\in \mathbb C[\partial]$,
		the relation $p(\partial)x\in\mathcal N^{k+1}$ forces
		$x+\mathcal N^{k+1}=0$. Hence $\mathcal N^{k+1}$ is saturated
		in $\mathcal N^k$, proving~$(2)$.
		
		Finally, nilpotence gives $\mathcal N^{c+1}=0$ for some $c$.
		By~$(1)$, the zero submodule is saturated in $\mathcal N$,
		so $\mathcal N$ is torsion-free. Since $\mathcal N$ is
		finitely generated over the principal ideal domain $\mathbb C[\partial]$,
		it is free.
	\end{proof}
	
	\begin{definition}
		A free $\mathcal N$ of rank $n$ is called \emph{null-filiform} if $\operatorname{rank}(\mathcal N^k)=n+1-k$ for $1\leq k\leq n+1$. A basis $\{e_1, \ldots, e_n\}$ of a null-filiform algebra $\mathcal N$ is called \emph{adapted} if $\mathcal N^k=\bigoplus\limits_{r=k}^n\C[\partial]e_r$ for $1\leq k\leq n.$
	\end{definition}
	
	\begin{remark}\label{rem6.3} Note that a null-filiform LCA $\mathcal N$ admits an adapted basis if and only if it is totally saturated. Indeed, an adapted basis represents each $\mathcal N^k$ as a direct summand of $\mathcal N$, and hence as a saturated submodule. Conversely, if $\mathcal N$ is totally saturated, then Proposition~\ref{prop6.1} leads each $\mathcal N^i/\mathcal N^{i+1}$ is free of rank one. Choose $e_i\in\mathcal N^i$ whose class generates this quotient. Then $\mathcal N^i=\C[\partial]e_i\oplus\mathcal N^{i+1}.$ Since $\mathcal N^{n+1}=0$, successive substitution gives $\mathcal N^k=\bigoplus\limits_{i=k}^n\C[\partial]e_i$ for every $k$, so $\{e_1,\ldots,e_n\}$ is an adapted basis.
	\end{remark}
	
	Throughout this section, $\mathcal N$ denotes a null-filiform {\it LCA} of rank $n\geq 2$.
	
	\begin{lemma}\label{nf:saturation-brackets}
		Let $\mathcal U$ and $\mathcal V$ be submodules of
		$\mathcal L$. Then
		\begin{enumerate}
			\item[(i)]
			$[\operatorname{Sat}_{\mathcal L}(\mathcal U)\cdot\mathcal V]
			=[\mathcal U\cdot\mathcal V]$;
			\item[(ii)]
			$[\operatorname{Sat}_{\mathcal L}(\mathcal U)\cdot
			\operatorname{Sat}_{\mathcal L}(\mathcal V)]
			=[\mathcal U\cdot\mathcal V]$.
		\end{enumerate}
	\end{lemma}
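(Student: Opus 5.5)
The plan is to prove (i) by a sesquilinearity argument of the kind used in Lemma~\ref{ca:normalizer-facts}(ii), and then to obtain (ii) by applying (i) twice with skew-symmetry in between.

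For (i), the inclusion $[\mathcal U\cdot\mathcal V]\subseteq[\operatorname{Sat}_{\mathcal L}(\mathcal U)\cdot\mathcal V]$ is immediate from $\mathcal U\subseteq\operatorname{Sat}_{\mathcal L}(\mathcal U)$. For the reverse inclusion, let $x\in\operatorname{Sat}_{\mathcal L}(\mathcal U)$ and $v\in\mathcal V$, and choose $0\ne p\in\C[\partial]$ with $p(\partial)x\in\mathcal U$. Sesquilinearity gives
\[
p(-\lambda)[x_\lambda v]=[(p(\partial)x)_\lambda v]\in[\mathcal U\cdot\mathcal V][\lambda].
\]
Write $[x_\lambda v]=\sum_{k}\lambda^k c_k$ with $c_k\in\mathcal L$, and write $p(-\lambda)=\sum_{j=0}^{d}p_j\lambda^j$ with $p_d\neq 0$. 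Comparing coefficients of powers of $\lambda$, the top coefficient gives $p_dc_K\in[\mathcal U\cdot\mathcal V]$, where $K$ is the largest index with $c_K\ne 0$; hence $c_K\in[\mathcal U\cdot\mathcal V]$. Descending induction on $k$ then shows that every $c_k$ lies in $[\mathcal U\cdot\mathcal V]$. Indeed, the coefficient of $\lambda^{k+d}$ equals $p_dc_k$ plus a $\C$-linear combination of the $c_{k'}$ with $k'>k$, and these are already known to lie in $[\mathcal U\cdot\mathcal V]$.

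In other words, the key point is that $\mathcal L[\lambda]$ behaves like a polynomial module over $\C$ in $\lambda$, so multiplication by the nonzero polynomial $p(-\lambda)$ can be cancelled coefficientwise. The coefficients $c_k$ generate $[\operatorname{Sat}_{\mathcal L}(\mathcal U)\cdot\mathcal V]$ as a $\C[\partial]$-module, and $[\mathcal U\cdot\mathcal V]$ is a $\C[\partial]$-submodule, so the reverse inclusion follows. This coefficient extraction is the only step requiring care; the one thing to check is that the cancellation happens in $\C[\lambda]$ and does not involve any division by elements of $\C[\partial]$.

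For (ii), skew-symmetry gives $[\mathcal U\cdot\mathcal V]=[\mathcal V\cdot\mathcal U]$ for any submodules $\mathcal U,\mathcal V$. Here we use that the coefficients of $[b_{-\partial-\lambda}a]$ lie in the $\C[\partial]$-module generated by the coefficients of $[b_\mu a]$. Then, using (i) for the first and third equalities and skew-symmetry for the second and fourth,
\[
[\operatorname{Sat}(\mathcal U)\cdot\operatorname{Sat}(\mathcal V)]=[\mathcal U\cdot\operatorname{Sat}(\mathcal V)]=[\operatorname{Sat}(\mathcal V)\cdot\mathcal U]=[\mathcal V\cdot\mathcal U]=[\mathcal U\cdot\mathcal V].
\]
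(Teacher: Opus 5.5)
Your proof is correct and follows essentially the same route as the paper. The paper also uses sesquilinearity to obtain $p(-\lambda)[x_\lambda v]$, but it delegates the coefficient cancellation to \cite[Lemma~3.1]{DK}, whereas you prove it directly by descending induction. For (ii) you spell out the symmetry $[\mathcal U\cdot\mathcal V]=[\mathcal V\cdot\mathcal U]$ that the paper's one-line argument uses implicitly.
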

	\begin{proof}
		To prove~(i), let $u\in\operatorname{Sat}_{\mathcal L}(\mathcal U)$ and $v\in\mathcal V$. Choose a nonzero polynomial
		$p\in\C[\partial]$ such that $p(\partial)u\in\mathcal U$. Since $[p(\partial)u_\lambda v]=p(-\lambda)[u_\lambda v]$ and $p(-\lambda)$ has nonzero scalar leading coefficient, the coefficients of $[u_\lambda v]$ and those of $p(-\lambda)[u_\lambda v]$ generate the same submodule; see also \cite[Lemma~3.1]{DK}. 
		Consequently, every coefficient of $[u_\lambda v]$ belongs to
		$[\mathcal U\cdot\mathcal V]$, proving $[\operatorname{Sat}_{\mathcal L}(\mathcal U)\cdot\mathcal V]
		\subseteq[\mathcal U\cdot\mathcal V].$
		The reverse inclusion follows from
		$\mathcal U\subseteq\operatorname{Sat}_{\mathcal L}(\mathcal U)$.
		
		For (ii), applying (i) and conformal skew-symmetry gives
		$$[\operatorname{Sat}_{\mathcal L}(\mathcal U)\cdot
		\operatorname{Sat}_{\mathcal L}(\mathcal V)]
		=[\mathcal U\cdot\operatorname{Sat}_{\mathcal L}(\mathcal V)]
		=[\mathcal U\cdot\mathcal V].
		$$
	\end{proof}
	
	We now show that a null-filiform {\it LCA} with a nonzero conformal torus is totally saturated and that each nonzero torus element admits an adapted basis in which it acts diagonally.
	
	\begin{proposition}\label{nf:torus-saturation}
		Let $\mathcal N$ admits a nonzero conformal torus. Then $\mathcal N$ is totally saturated. Moreover, for every nonzero element $t$ of a torus of $\mathcal N$, there exist an adapted basis $\{v_1,\ldots,v_n\}$ and a nonzero polynomial $a\in\C[\lambda]$ such that $t_\lambda v_i=i\,a(\lambda)v_i$, $1\leq i\leq n.$
	\end{proposition}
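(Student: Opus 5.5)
Fix a nonzero element $t$ of a torus of $\mathcal N$ and a basis $e_1,\ldots,e_n$ diagonalizing it. By Lemma~\ref{to:diagonal-entry} we have $t_\lambda e_i=a_i(\lambda)e_i$ with $a_i\in\C[\lambda]$. By Corollary~\ref{to:weight-decomposition}, applied to the torus $\C[\partial]t$, $\mathcal N$ decomposes as $\bigoplus_\alpha \mathcal N_\alpha$ with $[\mathcal N_\alpha\cdot\mathcal N_\beta]\subseteq\mathcal N_{\alpha+\beta}$. Each $\mathcal N_\alpha$ is a direct summand spanned by basis vectors.

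The first step is to show that every $\mathcal N^k$ is a graded submodule, $\mathcal N^k=\bigoplus_\alpha(\mathcal N^k\cap\mathcal N_\alpha)$. This follows by induction from the bracket rule and the definition $\mathcal N^{k+1}=[\mathcal N\cdot\mathcal N^k]$. The same holds for saturations, since $\operatorname{Sat}$ commutes with a direct sum of graded pieces.

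Next consider the graded abelianization. Since $\operatorname{rank}\mathcal N/\mathcal N^2=1$, exactly one weight space, say $\mathcal N_{\alpha}$, contributes rank one to $\mathcal N/\mathcal N^2$. All other weight pieces $\mathcal N_\beta$ satisfy $\mathcal N_\beta\subseteq \operatorname{Sat}(\mathcal N^2)$. Put $\mathcal S=\operatorname{Sat}_{\mathcal N}(\mathcal N^2)$ and pick $v_1\in\mathcal N_\alpha$ whose class generates $\mathcal N/\mathcal S$ (free of rank one). Then $\mathcal N=\C[\partial]v_1+\mathcal S$. Lemma~\ref{nf:saturation-brackets} gives $[\mathcal N\cdot\mathcal N]=[(\C[\partial]v_1+\mathcal S)\cdot(\C[\partial]v_1+\mathcal S)]$, and an induction using Lemma~\ref{nf:saturation-brackets}(ii) repeatedly shows that $\mathcal N^k$ is generated by iterated brackets of $v_1$ with elements of $\mathcal S$.

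The key step, and the one I expect to be the main obstacle, is to show that $\operatorname{Sat}(\mathcal N^{k})$ is generated modulo $\operatorname{Sat}(\mathcal N^{k+1})$ by an element of weight $k\alpha$. I will argue by induction on $k$, using the rank count $\operatorname{rank}\mathcal N^k=n+1-k$. The quotient $\operatorname{Sat}(\mathcal N^k)/\operatorname{Sat}(\mathcal N^{k+1})$ is free of rank one, and it is spanned by coefficients of $[v_1{}_\lambda w]$ with $w$ a generator of the previous layer, because brackets landing in $[\mathcal S\cdot\mathcal S]\subseteq\operatorname{Sat}(\mathcal N^{4})$-type terms fall deeper. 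Indeed, $[\mathcal N^i\cdot\mathcal N^j]\subseteq\mathcal N^{i+j}$ forces such terms into a deeper filtration level for $k\ge 2$, with care for small $k$. Grading then yields a weight vector $v_k$ of weight $k\alpha$ in that layer. Consequently $\operatorname{Sat}(\mathcal N^k)=\bigoplus_{r\ge k}\C[\partial]v_r$, and the weights $\alpha,2\alpha,\ldots,n\alpha$ account for all $n$ ranks. Since $t\ne0$, we have $a:=\alpha\neq0$, so these weights are pairwise distinct.

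The final step is to prove saturation, $\mathcal N^k=\operatorname{Sat}(\mathcal N^k)$. Write $[v_1{}_\lambda v_k]=g_k(\partial,\lambda)v_{k+1}+(\text{deeper})$, where deeper terms vanish by weight considerations, since $v_{k+1}$ spans the weight-$(k+1)a$ space. Apply the constant-weight analysis of Proposition~\ref{to:constant-weight-reduction} and Theorem~\ref{to:constant-weight-criterion}: the nonzero $f_{1k}^{k+1}$ must be of the form $A(w_1\partial+w_{k+1}\mu)+B$. Its coefficients in $\mu$ generate, over $\C[\partial]$, either $\C[\partial]$ (if $B\neq0$ or $A\neq0$ with a unit among the coefficients) or the ideal they span, which contains a nonzero constant whenever $A$ or $B\ne0$. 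Hence $\mathcal N^{k+1}\ni v_{k+1}$, proving by induction that $\mathcal N^k=\bigoplus_{r\ge k}\C[\partial]v_r$. This shows simultaneously that $\mathcal N$ is totally saturated and that $\{v_i\}$ is an adapted basis with $t_\lambda v_i=i\,a(\lambda)v_i$. The delicate point is justifying the constant-weight form when $a$ is nonconstant; I would first reduce to constant weights by the coefficientwise identity \eqref{to:coefficient-weight-identity}.
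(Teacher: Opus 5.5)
Your route is correct in outline, but it differs from the paper's.

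\textbf{The paper's route.} It first converts $t$ into a nonzero derivation $d$ with constant diagonal entries, via the proof of Proposition~\ref{to:constant-weight-reduction}, and then works with the ordinary diagonalizable derivation $d_0$. This $d_0$ acts on each rank-one layer $Q_k=F^k/F^{k+1}$, where $F^k=\operatorname{Sat}_{\mathcal N}(\mathcal N^k)$, by a scalar $w_k$. Nonvanishing of the induced bracket $Q_1\times Q_k\to Q_{k+1}$ gives $w_k=kw_1$, and $w_1\neq0$ because a diagonalizable $d_0$ with $d_0(F^k)\subseteq F^{k+1}$ must vanish. Saturation of $\mathcal N^k$ then takes one line: split $\mathcal N^k$ along the distinct eigenvalues and read off the top $\lambda$-coefficient of $d_\lambda(p(\partial)v_j)=w_jp(\partial+\lambda)v_j\in\mathcal N^k[\lambda]$. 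The $t$-weights $a_k=k\,a_1$ are recovered only at the end, from the weight rule.

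\textbf{Your route.} You grade $\mathcal N$ directly by the polynomial weights of $t$ and check that the $\mathcal N^k$ and their saturations are graded. You then force the $k$-th layer into weight $k\alpha$ using $\mathcal N^{k}\subseteq[v_1\cdot v_{k-1}]+\mathcal N^{k+1}$, which follows from Lemma~\ref{nf:saturation-brackets}. Then $\alpha\neq0$ follows from $t\neq0$ by rank counting rather than from a nilpotence argument, and the eigenvalue pattern $k\alpha$ of $t$ comes out directly.

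\textbf{Trade-off.} The price is the generator bookkeeping, plus an appeal to the linear form in Theorem~\ref{to:constant-weight-criterion} for saturation, where the paper's top-coefficient argument is lighter. That argument in fact works with $t$ itself: $t_\lambda(p(\partial)v_j)=j\,a(\lambda)p(\partial+\lambda)v_j$ has top coefficient a nonzero multiple of $v_j$. So once you know $\mathcal N^k$ is graded and $t$-invariant, you need no constant-weight reduction at all.

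\textbf{Points to make precise in the write-up.}
\begin{enumerate}
\item What you actually obtain is that the image of $\mathcal N^k$ in $F^k/F^{k+1}$ is a rank-one submodule of the $k\alpha$-component. Torsion-freeness then kills the other components, i.e.\ $F^k_\beta=F^{k+1}_\beta$ for $\beta\neq k\alpha$. This is exactly what yields both $\alpha\neq0$ and $\operatorname{rank}\mathcal N_{k\alpha}=1$.
\item A general element of $\mathcal N_{k\alpha}$ (e.g.\ $\partial e_i$) is not a $t$-eigenvector. So $t_\lambda v_k=k\alpha(\lambda)v_k$ needs $\mathcal N_{k\alpha}=\C[\partial]e_{i_k}$ to be of rank one, which makes your layer generator a nonzero scalar multiple of a diagonalizing basis vector.
\item The nonvanishing of $f^{k+1}_{1,k}$ used in your last step comes from the key step, namely $\mathcal N^{k+1}\not\subseteq F^{k+2}$. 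Say so explicitly.
\end{enumerate}
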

	\begin{proof}
		Fix a nonzero element $t$ of a conformal torus of $\mathcal N$, and choose a basis in which $t$ is diagonal. By Lemma~\ref{to:diagonal-entry}, its diagonal entries belong to $\C[\lambda]$. The argument in the proof of Proposition~\ref{to:constant-weight-reduction} yields a nonzero conformal derivation $d$ whose matrix in the same basis is diagonal with constant entries. In particular, the derivation
		$d_0:={d_\lambda}_{| \lambda=0}$ is diagonalizable.
		
		Set
		$$F^k=\operatorname{Sat}_{\mathcal N}(\mathcal N^k),
		\quad 1\leq k\leq n+1, \quad \mbox{and} \quad 
		Q_k=F^k/F^{k+1}, \quad 1\leq k\leq n.$$
		Saturation preserves rank, and $F^{k+1}$ is saturated in $F^k$. The null-filiform rank condition therefore implies that each $Q_k$ is free of rank one. Moreover, $F^{n+1}=0$. By Lemma~\ref{nf:saturation-brackets}, we have 
		$$[F^i\cdot F^j]=[\mathcal N^i\cdot\mathcal N^j]
		\subseteq \mathcal N^{i+j}\subseteq F^{i+j}.$$
		Thus the bracket induces a conformal bilinear map from $Q_1\times Q_k$ to $Q_{k+1}$ for $1\leq k<n$.
		
		The derivation $d_0$ preserves every $\mathcal N^k$, and hence every $F^k$. Its spectral projections consequently preserve $F^k$ and $F^{k+1}$. Since $Q_k$ has rank one, the induced action of $d_0$ on $Q_k$ is multiplication by a scalar $w_k\in\C$.
		
		For $1\leq k<n$, the induced bracket $[Q_1\cdot Q_k]$ is nonzero. Indeed, its vanishing would imply
		$$\mathcal N^{k+1}=[\mathcal N\cdot\mathcal N^k]\subseteq [F^1\cdot F^k]\subseteq F^{k+2},$$
		contrary to $\operatorname{rank}(\mathcal N^{k+1}) > \operatorname{rank}(F^{k+2}).$ The derivation identity therefore gives $w_{k+1}=w_1+w_k,$ for $1\leq k<n,$
		and hence $w_k=kw_1$ for every $k$. If $w_1=0$, then
		$d_0(F^k)\subseteq F^{k+1}$ for all $k$, so $d_0^n=0$. Since $d_0$ is diagonalizable, this forces $d_0=0$. As $d$ has constant diagonal entries, it follows that $d=0$, a contradiction. Thus $w_1\neq0$, and the weights $w_1,\ldots,w_n$ are nonzero and pairwise distinct. We may therefore reorder the chosen basis as $\{v_1,\ldots,v_n\}$ so that
		$$d_\lambda v_j=w_jv_j, \qquad t_\lambda v_j=a_j(\lambda)v_j,
		\qquad a_j\in\C[\lambda].$$
		The weights occurring in $F^k$ are precisely $w_k,\ldots,w_n$. Since $F^k$ is saturated and $d_0$ acts on $F^r/F^{r+1}$ as multiplication by $w_r$ for $k\leq r\leq n$, we have 
		$F^k=\bigoplus\limits_{j=k}^n\C[\partial]v_j.$

		Since $\mathcal N^k\subseteq F^k$ is $d_0$-invariant and the eigenvalues of $d_0$ are pairwise distinct, we have
		$$\mathcal N^k=\bigoplus_{j=k}^{n}\bigl(\mathcal N^k\cap\C[\partial]v_j\bigr)=\bigoplus_{j=k}^{n}
		p_{k,j}(\partial)\C[\partial]v_j$$
		for suitable nonzero polynomials $p_{k,j}$ (their nonvanishing follows from $\operatorname{rank}(\mathcal N^k)=\operatorname{rank}(F^k)$).
		Taking into account that $d$ preserves $\mathcal N^k$, we obtain 
		$$d_\lambda\bigl(p_{k,j}(\partial)v_j\bigr) = w_jp_{k,j}(\partial+\lambda)v_j \in\mathcal N^k[\lambda].$$
		The coefficient of the highest power of $\lambda$ is a nonzero scalar multiple of $v_j$, since $w_j\neq0$. Comparing coefficients yields $v_j\in\mathcal N^k$ (cf.~\cite[Lemma~3.1]{DK}). Thus $\mathcal N^k=F^k=\bigoplus\limits_{j=k}^{n}\C[\partial]v_j.$
		It follows that $\mathcal N$ is totally saturated and that $\{v_1,\ldots,v_n\}$ is an adapted basis.
		
		Finally, the nonvanishing of $[Q_1\cdot Q_k]$ and \eqref{to:weight-sum} imply $a_{k+1}=a_1+a_k$ for $1\leq k<n$.
		Therefore, we have $a_k=k\,a_1$ for every $k$. Since $t\neq0$, we have $a_1\neq0$. Taking $a=a_1$ completes the proof.
	\end{proof}
	
	The following result gives a criterion for a saturated null-filiform Lie conformal algebra to admit a nonzero torus and provides a structural description of such algebras in an adapted basis.
	
	\begin{proposition}\label{to:null-torus-existence}
		The algebra $\mathcal N$ admits a nonzero conformal torus if and
		only if it admits an adapted basis $\{v_1, \ldots, v_n\}$ 
		such that $d_\lambda v_i=i v_i$ defines a conformal derivation of $\mathcal N$. In any such basis, the multiplication table of $\mathcal N$ is given by
		\begin{equation}\label{eq6.1.01}
			[v_i{}_\mu v_j]=\bigl(c_{ij}(i\partial+(i+j)\mu)+b_{ij}\bigr)v_{i+j} \quad 2 \leq i+j\leq n,
		\end{equation} where $c_{ij}, b_{ij}\in\C$.
	\end{proposition}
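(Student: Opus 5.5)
The plan has two directions, with the multiplication table obtained along the way. For the forward direction, take a nonzero element $t$ of a torus. Proposition~\ref{nf:torus-saturation} then gives an adapted basis $\{v_1,\ldots,v_n\}$ and $0\neq a\in\C[\lambda]$ with $t_\lambda v_i=i\,a(\lambda)v_i$.

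We then run the coefficient-splitting argument of Proposition~\ref{to:constant-weight-reduction} for $t$ in this basis. Write $a(\lambda)=\sum_r a_r\lambda^r$. Each coefficient vector $(a_{1,r},\ldots,a_{n,r})=a_r(1,2,\ldots,n)$ defines a conformal derivation $d^{(r)}$. Choosing $r$ with $a_r\neq0$ and rescaling, $d_\lambda v_i=iv_i$ is a conformal derivation.

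Conversely, if such an adapted basis exists, then $d$ has a constant diagonal matrix. Hence $[d_\lambda d]=0$ by \eqref{to:commuting}, and $\C[\partial]d$ is a nonzero torus, exactly as at the end of the sufficiency part of Theorem~\ref{to:constant-weight-criterion}.

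For the multiplication table, fix such a basis and write $[v_i{}_\mu v_j]=\sum_k f_{ij}^k(\partial,\mu)v_k$. Since $d$ has constant weights $w_i=i$, Theorem~\ref{to:constant-weight-criterion} (equivalently \eqref{to:polynomial-identity} with $a_i=i$) applies. Whenever $f_{ij}^k\neq0$ we get $k=i+j$, so $[v_i{}_\mu v_j]=f_{ij}(\partial,\mu)v_{i+j}$, and this vanishes when $i+j>n$.

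The form of $f_{ij}$ follows from the case analysis in that proof. All three weights $i,j,i+j$ are nonzero, which is the case $w_iw_jw_k\neq0$. There we showed $f_{\partial\partial}=f_{\partial\mu}=f_{\mu\mu}=0$, giving $f=A(w_i\partial+w_k\mu)+B$. With $w_i=i$ and $w_k=i+j$ this is $c_{ij}(i\partial+(i+j)\mu)+b_{ij}$, which is \eqref{eq6.1.01}.

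The main obstacle is the extraction of a constant-weight derivation from $t$ while keeping the adapted basis. One must check that the splitting in Proposition~\ref{to:constant-weight-reduction} only uses the diagonal basis of $t$. Since the basis from Proposition~\ref{nf:torus-saturation} diagonalizes $t$ with entries in $\C[\lambda]$, this holds, and the constant derivation is diagonal in that same adapted basis. Proportionality of the weight vector to $(1,\ldots,n)$ is automatic from $a_k=k\,a_1$.
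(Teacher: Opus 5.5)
Your proof is correct and follows essentially the paper's route. You use Proposition~\ref{nf:torus-saturation} for the forward direction, $\C[\partial]d$ for the converse, and the constant-weight identities \eqref{eq5.71} with $w_i=i$ (the case $w_iw_jw_k\neq0$) to get the multiplication table. The only cosmetic difference is that the paper obtains $d$ by directly cancelling $a(\lambda)$ in \eqref{to:polynomial-identity}, using $t_\lambda=a(\lambda)d_\lambda$; your coefficient-splitting detour reduces to exactly this cancellation here.
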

	\begin{proof} Suppose that $\mathcal N$ admits a nonzero torus. By Proposition \ref{nf:torus-saturation}, a nonzero element $t$ of this torus acts in an adapted basis by $t_\lambda v_i=i\,a(\lambda)v_i,$ $0\ne a\in\C[\lambda].$ Define $d\in\operatorname{Cend}(\mathcal N)$ by $d_\lambda v_i=i v_i$. Since $t_\lambda=a(\lambda)d_\lambda$, cancelling $a(\lambda)$ in \eqref{to:polynomial-identity} for $t$ shows that $d$ is a conformal derivation. Conversely, such a derivation generates the nonzero torus $\C[\partial]d$.
		
		Now fix any adapted basis with this property and write $[v_i{}_\mu v_j]=\sum\limits_{k=1}^{n} f_{i,j}^k(\partial,\mu)v_k.$
		By Corollary~\ref{to:weight-decomposition}, $f_{i,j}^k=0$ unless $k=i+j$. 
		
		For $k=i+j\leq n$, equation~\eqref{eq5.71} gives $k(f_{i,j}^k)_\partial=i(f_{i,j}^k)_\mu,$ and $k(f_{i,j}^k)_{\partial\partial}=i(f_{i,j}^k)_{\mu\mu}.$ 
		Differentiating the first identity with respect to
		$\partial$ and $\mu$, and using the second, yields
		$$k^2(f_{i,j}^k)_{\partial\partial}=i^2(f_{i,j}^k)_{\mu\mu}=ik(f_{i,j}^k)_{\partial\partial}.$$
		Since $k(k-i)=kj\ne0$, we obtain $(f_{i,j}^k)_{\partial\partial}=0$. The second identity then gives $(f_{i,j}^k)_{\mu\mu}=0$, while the $\partial$-derivative of the first gives $(f_{i,j}^k)_{\partial\mu}=0$. Thus $f_{i,j}^k$ has total degree at most one, and the first identity implies $f_{i,j}^k(\partial,\mu)=c_{ij}(i\partial+k\mu)+b_{ij}$
		for some $c_{ij},b_{ij}\in\C$. Since $k=i+j$, this proves \eqref{eq6.1.01}.
	\end{proof}
	Note that the algebras described by products \eqref{eq6.1.01} are quadratic Lie conformal algebras in the sense of \cite{Xu2000}.
	
	\begin{example}
		Consider the standard generators $\{L_i\}_{i\geq0}$ of the Block-type LCA $\mathfrak B(q)$, $q\in\mathbb N$, set
		$$\mathfrak B_n(q):=\left(\bigoplus\limits_{i\geq1}\C[\partial]L_i\right)
		\Big/\left(\bigoplus\limits_{i\geq n+1}\C[\partial]L_i\right), \quad n\geq3$$
		and denote by $e_i$ the image of $L_i$ in $\mathfrak B_n(q)$ for $1\leq i\leq n$. The lower central series of $\mathfrak B_n(q)$ is given by $\mathfrak B_n(q)^k=\bigoplus\limits_{i=k}^{n}\C[\partial]e_i$ for $1\leq k\leq n+1.$ Then $\mathfrak B_n(q)$ is null-filiform and totally saturated, with adapted basis $\{e_1,\ldots,e_n\}$.
		
		Suppose that $\mathfrak B_n(q)$ admits a nonzero conformal torus. By Proposition~\ref{to:null-torus-existence}, there is an adapted basis $\{v_1,\ldots,v_n\}$ such that 
		$[v_1{}_\lambda v_2]=\bigl(\alpha(\partial+3\lambda)+\beta\bigr)v_3$ for some $\alpha,\beta\in\C$. Since both bases are adapted, we may write
		$v_i\equiv a_i e_i\pmod{\mathfrak B_n(q)^{i+1}}$ with $a_i\in\C^\times.$  The defining brackets of $\mathfrak B(q)$ then give 
		$$[v_1{}_\lambda v_2]\equiv\frac{a_1a_2}{a_3}\big((q+1)\partial+(2q+3)\lambda\big)v_3\pmod{\mathfrak B_n(q)^4[\lambda]}.$$
		Comparing the coefficients of $\partial$ and $\lambda$ in the free rank-one quotient $\mathfrak B_n(q)^3/\mathfrak B_n(q)^4$, we obtain
		$$\alpha=\frac{a_1a_2}{a_3}(q+1),\qquad 3\alpha=\frac{a_1a_2}{a_3}(2q+3).$$
		These identities imply $q=0$, a contradiction. Therefore $\mathcal N$ admits no nonzero conformal torus. In particular, a totally saturated null-filiform LCA need not admit a nonzero conformal torus.
	\end{example}
	
	We introduce the following three families of totally saturated
	null-filiform {\it LCA}, each with underlying
	$\C[\partial]$-module $\bigoplus\limits_{i=1}^n\C[\partial]e_i$.
	
	\begin{itemize}
		\item[(i)]
		The algebra $\mathcal A_n$ is defined by
		$$[e_1{}_\lambda e_1]=(\partial+2\lambda)e_2,\quad [e_1{}_\lambda e_i]=e_{i+1}, \quad 2\leq i<n.$$
		
		\item[(ii)]
		For $n\geq3$ and $\beta\in\C$, the algebra $\mathcal C_n(\beta)$
		is defined by
		\begin{equation}\label{to:regular-family}
			[e_i{}_\lambda e_j]=\bigl(i\partial+(i+j)\lambda+(j-i)\beta\bigr)e_{i+j},\quad 2\leq i+j\leq n.
		\end{equation}
		
		\item[(iii)]
		For odd $n\geq5$ and $\gamma\in\C$, the algebra $\mathcal D_n(\gamma)$
		is defined by
		\begin{equation}\label{to:exceptional-family}
			\begin{array}{lllllllllllll}
				[e_1{}_\lambda e_1] &=&(\partial+2\lambda)e_2, &[e_1{}_\lambda e_i]&=&e_{i+1}, &2\leq i\leq n-2,\\[1mm]
				[e_1{}_\lambda e_{n-1}]&=&(\partial+n\lambda+\gamma)e_n,&[e_i{}_\lambda e_{n-i}]&=&(-1)^i n e_n, &2\leq i\leq n-2.
			\end{array}
		\end{equation}
	\end{itemize}
	In each family, the remaining brackets between basis elements
	are determined by conformal skew-symmetry or are zero. 
	
	We do not use the notation $\mathcal B$ for these families, since this notation is used for Block-type {\it LCA}s; see \cite{SYY,SXL,WangXiaLiu}.

	\begin{theorem}\label{to:null-classification}
		The algebra $\mathcal N$ admits a nonzero torus if and only if it is
		isomorphic to one of the algebras
		$\mathcal A_n$, $\mathcal C_n(\beta)$ or $\mathcal D_n(\gamma)$.
	\end{theorem}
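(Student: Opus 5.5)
For the "if" direction, I will check that each of $\mathcal A_n$, $\mathcal C_n(\beta)$ and $\mathcal D_n(\gamma)$ is a null-filiform LCA whose brackets have the shape \eqref{eq6.1.01} in the basis $\{e_i\}$. Proposition~\ref{to:null-torus-existence} then shows that $d_\lambda e_i=ie_i$ is a conformal derivation, and $\C[\partial]d$ is a nonzero torus. Concretely, $\mathcal A_n$ corresponds to $c_{11}=1$, $b_{11}=0$, $c_{1i}=0$, $b_{1i}=1$. For $\mathcal C_n(\beta)$ one has $c_{ij}=1$ and $b_{ij}=(j-i)\beta$. For $\mathcal D_n(\gamma)$ the coefficients can be read off directly. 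The Jacobi identity must still be verified for each family. This is a direct computation on triples $(e_i,e_j,e_k)$ with $i+j+k\le n$, and only the weight-$(i+j+k)$ component matters.

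For the "only if" direction, Proposition~\ref{to:null-torus-existence} supplies an adapted basis $\{v_i\}$ with multiplication table \eqref{eq6.1.01}. Skew-symmetry gives linear relations between $(c_{ij},b_{ij})$ and $(c_{ji},b_{ji})$; evaluating at $[v_j{}_{-\partial-\mu}v_i]$ yields $c_{ji}=-c_{ij}$ up to the change $i\partial+(i+j)\mu\mapsto j\partial+\dots$, together with $b_{ji}=-b_{ij}$. The Jacobi identity on $(v_i,v_j,v_k)$ then gives polynomial equations in $\lambda,\mu,\partial$ of degree at most two, and comparing coefficients produces quadratic relations among these constants. Null-filiformity forces $[\mathcal N\cdot\mathcal N^k]=\mathcal N^{k+1}$. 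Since only $v_1$ produces a bracket landing in $\C[\partial]v_{k+1}$ modulo nothing larger, the polynomial $c_{1k}((\partial)+(1+k)\mu)+b_{1k}$ must generate the unit ideal in the rank-one quotient. More precisely, the coefficients of $[v_1{}_\mu v_k]$ together with the terms $[v_i{}_\mu v_{k+1-i}]$ must generate $\C[\partial]v_{k+1}$. This yields the non-degeneracy conditions $(c_{1k},b_{1k})\neq(0,0)$ or similar ones. I will also normalize the basis by the torus-compatible changes of basis $v_i\mapsto \alpha^i v_i$ (and shifts of the form $v_i\mapsto$ constant multiples). These preserve $d$ and rescale $c_{ij},b_{ij}$ by $\alpha^{i+j}/\alpha^{i+j}$ up to individual scalars $a_i$. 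In this way I can set one of $c_{1k}$ or $b_{1k}$ equal to $1$ for each $k$.

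The case split will be on $c_{11}$, noting that $b_{11}=0$ by skew-symmetry. We need $c_{11}\neq0$, because otherwise $[v_1{}_\lambda v_1]=0$ and $\mathcal N^2$ would be generated without $v_2$, contradicting rank. After normalizing $c_{11}=1$, I will use Jacobi on $(v_1,v_1,v_k)$ to obtain a recursion for $(c_{1,k+1},b_{1,k+1})$ in terms of $(c_{1k},b_{1k})$ and $c_{2,k-1},b_{2,k-1}$. Jacobi on $(v_1,v_i,v_j)$ then determines all $c_{ij},b_{ij}$ with $i\ge2$ from the first row. I expect two branches. In the first, $c_{12}\neq0$, which propagates to $c_{1k}\neq0$ for all $k$ and, after rescaling, gives $\mathcal C_n(\beta)$ with $\beta$ the surviving invariant. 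In the second, $c_{12}=0$, which forces $c_{1k}=0$ and $c_{ij}=0$ for $i,j\ge2$, giving $\mathcal A_n$. The exception is the top level $k=n-1$, where the constraint from $(v_1,v_1,v_{n-1})$ disappears because $v_{n+1}=0$. The exceptional family $\mathcal D_n(\gamma)$ arises there: the brackets $[v_i{}_\mu v_{n-i}]$ can be nonzero only when an alternating sign pattern is consistent with skew-symmetry $[v_i{}_\mu v_{n-i}]=-[v_{n-i}{}_{-\partial-\mu}v_i]$, which requires $n$ odd.

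The main obstacle is this top-level analysis together with the bookkeeping of the quadratic Jacobi relations. One must show that in the $c_{12}=0$ branch the only freedom at level $n$ is the $\mathcal D_n(\gamma)$ pattern, with $c_{1,n-1}=1$ forced whenever the $b_{i,n-i}$ are nonzero. The Jacobi identity on $(v_1,v_i,v_{n-1-i})$ should give $b_{i,n-i}+b_{i+1,n-1-i}=0$ together with a relation tying $b_{i,n-i}$ to $c_{1,n-1}$. A remaining step is to check that the parameters $\beta,\gamma$ cannot be removed by the residual basis changes, and that these are the only allowed changes. This holds because any isomorphism between such algebras can be conjugated, via Proposition~\ref{to:null-torus-existence}, to one commuting with $d$, and such an isomorphism is diagonal, $v_i\mapsto a_iv_i$.
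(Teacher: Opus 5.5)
Your outline has the same architecture as the paper's proof: the adapted basis from Proposition~\ref{to:null-torus-existence}, $b_{ii}=0$, non-degeneracy of $[v_1{}_\mu v_k]$, the normalization $c_{11}=1$, reconstruction of the whole table from the first row via Jacobi on $(v_1,v_i,v_j)$, and a split on $c_{12}$. There is a genuine gap: the branch analysis, which is the real content of the theorem, is only predicted (``I expect''), and the mechanism you propose for it does not work as stated. Jacobi on $(v_1,v_1,v_q)$ is not a recursion for the first row. Comparing coefficients, it gives exactly
\[
c_{2q}=c_{1q}c_{1,q+1},\qquad
b_{2q}=(q+2)\,b_{1q}c_{1,q+1}-q\,c_{1q}b_{1,q+1},
\]
so it only \emph{defines} second-row constants in terms of first-row ones. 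The first row becomes constrained when such a constant is computed a second, independent way, from the coefficients of Jacobi on $(v_1,v_2,v_{q-1})$, or when $b_{ii}=0$ is imposed at a middle index. This overdetermination drives every step, and your plan does not contain it.

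Here is what each branch needs.
\begin{itemize}
\item[(a)] For $c_{12}\neq0$, you must rule out a least $r$ with $c_{1r}=0$. After rescaling, $c_{ij}=1$ for $i+j\le r$ and $c_{ij}=0$ for $i+j=r+1$. The display then gives $b_{2,r-1}=-(r-1)b_{1r}$, while Jacobi on $(v_1,v_2,v_{r-2})$ gives $b_{2,r-1}=2b_{1r}$. Hence $(r+1)b_{1r}=0$, contradicting $(c_{1r},b_{1r})\neq(0,0)$. The same double computation of $b_{2,r-1}$ is what forces $b_{1r}=(r-1)\beta$.
\item[(b)] For $c_{12}=0$, let $r$ be least with $c:=c_{1r}\neq0$. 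One gets $b_{i,r+1-i}=(-1)^i(r+1)c$, so $r$ is even. However, $r=n-1$ does not follow merely from the vacuity of Jacobi on $(v_1,v_1,v_{n-1})$. If $r+2\le n$, you must show two things. First, $c_{1,r+1}=0$: Jacobi on $(v_1,v_2,v_{r-1})$ yields both $3c_{3,r-1}=(r-1)c\,c_{1,r+1}$ and $c_{3,r-1}=(r+1)c\,c_{1,r+1}$. Second, $b_{1,r+1}=0$: use the chain $b_{i,r+2-i}=(-1)^{i-1}\bigl((i-1)r+i-2\bigr)c\,b_{1,r+1}$ at $i=(r+2)/2$, where $b_{ii}=0$. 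Together these contradict non-degeneracy of $[v_1{}_\mu v_{r+1}]$, a condition that is vacuous exactly when $r+1=n$.
\end{itemize}
Without these computations, nothing in your proposal excludes first rows with an intermediate zero of $c_{1k}$ in the first branch, or an intermediate nonzero $c_{1r}$ in the second.

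Some smaller points also need correcting.
\begin{itemize}
\item[(1)] Skew-symmetry gives $c_{ij}=c_{ji}$ and $b_{ij}=-b_{ji}$, not $c_{ji}=-c_{ij}$.
\item[(2)] The non-degeneracy condition is exactly $(c_{1k},b_{1k})\neq(0,0)$. Modulo $\mathcal N^{k+2}$, the module $\mathcal N^{k+1}=[\mathcal N\cdot\mathcal N^k]$ is generated by the coefficients of $[v_1{}_\mu v_k]$ alone, so the brackets $[v_i{}_\mu v_{k+1-i}]$ with $i\ge2$ play no role.
\item[(3)] In the `if' direction, Proposition~\ref{to:null-torus-existence} only goes from the derivation to the shape \eqref{eq6.1.01}. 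For the converse, use the sufficiency part of Theorem~\ref{to:constant-weight-criterion} with $w_i=i$, or a direct one-line check.
\item[(4)] Your last paragraph concerns Theorem~\ref{to:null-isomorphism}, not this statement.
\end{itemize}
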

	
	\begin{proof}
		Each algebra in the statement is totally saturated null-filiform,
		and its structure coefficients satisfy \eqref{eq5.71} with $w_i=i$. Therefore, $d_\lambda e_i=i e_i$ defines a conformal derivation, and $\C[\partial]d$ is a nonzero torus.
		
		Conversely, suppose that $\mathcal N$ admits a nonzero torus.
		By Proposition~\ref{to:null-torus-existence}, there is an adapted basis
		$\{e_1,\ldots,e_n\}$ in which
		$$[e_i{}_\lambda e_j]=\bigl(c_{ij}(i\partial+(i+j)\lambda)+b_{ij}\bigr)e_{i+j},\qquad i+j\leq n.$$
		Skew-symmetry gives $c_{ij}=c_{ji}$ and $b_{ij}=-b_{ji};$
		in particular, $b_{ii}=0$. Moreover, the image of
		$[e_1{}_\lambda e_i]$ in $\mathcal N^{i+1}/\mathcal N^{i+2}$ is nonzero, so $(c_{1i},b_{1i})\neq(0,0)$ for all $1\leq i<n.$
		Since $b_{11}=0$, we have $c_{11}\neq0$ and may normalize
		$c_{11}=1$.
		
		We first observe that the brackets $[e_1{}_\lambda e_j]$
		determine the entire multiplication table. Indeed, the Jacobi
		identity for $(e_1,e_i,e_j)$ gives
		$$\bigl(c_{1i}(i\lambda-\mu)+b_{1i}\bigr)
		[e_{i+1}{}_{\lambda+\mu}e_j]=[e_1{}_\lambda[e_i{}_\mu e_j]]
		-[e_i{}_\mu[e_1{}_\lambda e_j]].
		$$
		The polynomial multiplying the bracket on the left is nonzero.
		Since $\mathcal N$ is free over $\C[\partial]$, multiplication
		by this polynomial is injective. Thus, once the first row and
		the brackets $[e_i{}_\mu e_j]$ are known, the identity uniquely
		determines the brackets $[e_{i+1}{}_\nu e_j]$.
		Induction on $i$ proves the assertion.
		
		For $i+j+k\leq n$, comparison of the coefficients of
		$\partial^2$ and $\lambda^2$ in the relation obtained from the Jacobi identity for
		$(e_i,e_j,e_k)$ yields
		\begin{equation}\label{to:null-associativity}
			c_{ij}c_{i+j,k}
			=
			c_{jk}c_{i,j+k}
			=
			c_{ik}c_{j,i+k}.
		\end{equation}
		Taking $i=j=1$ and using $c_{11}=1$, we obtain
		\begin{equation}\label{eq6.4}
			c_{2k}=c_{1k}c_{1,k+1}.
		\end{equation}
		Similarly, for $p+q+1\leq n$, comparison of the coefficients
		of $\partial$, $\lambda$, and the constant term in the Jacobi
		identity for $(e_1,e_p,e_q)$ gives
		\begin{equation}\label{to:null-first-row-identities}
			\begin{aligned}
				b_{pq}c_{1,p+q}+p c_{pq}b_{1,p+q}
				&=
				p b_{1q}c_{p,q+1}+c_{1q}b_{p,q+1}\\
				&\quad +(p+1)b_{1p}c_{p+1,q},\\[2pt]
				(p+q+1)b_{pq}c_{1,p+q}+p c_{pq}b_{1,p+q}
				&=
				(q+1)c_{1q}b_{p,q+1}\\
				&\quad +(p+q+1)b_{1p}c_{p+1,q}
				+p c_{1p}b_{p+1,q},\\[2pt]
				b_{pq}b_{1,p+q}
				&=
				b_{1q}b_{p,q+1}+b_{1p}b_{p+1,q}.
			\end{aligned}
		\end{equation}
		In particular, setting $p=1$ in the second identity gives
		\begin{equation}\label{to:null-second-row}
			b_{2q}
			=
			(q+2)b_{1q}c_{1,q+1}
			-
			q c_{1q}b_{1,q+1}.
		\end{equation}
		
		If $n=2$, the only nonzero bracket is $[e_1{}_\lambda e_1]=(\partial+2\lambda)e_2,$ and hence $\mathcal N\simeq\mathcal A_2$. We therefore assume that $n\geq3$ and distinguish two cases.
		
		\medskip
		\noindent
		\emph{Case 1: $c_{12}\neq0$.}
		We claim that $c_{1i}\neq0$ for every $1\leq i<n$.
		Otherwise, let $r\geq3$ be the least index for which $c_{1r}=0$.
		Successively rescaling $e_3,\ldots,e_r$, we may arrange that
		$c_{1i}=1$ for all $1\leq i<r.$ Applying \eqref{to:null-associativity} to $(1,i-1,j)$ gives $c_{1,i-1}c_{ij}=c_{i-1,j}c_{1,i+j-1}.$
		For $i+j\leq r$, induction on $i$ therefore yields $c_{ij}=1$.
		For $i+j=r+1$, the same identity gives $c_{ij}=0$.
		Thus
		\[
		c_{ij}=
		\begin{cases}
			1,& i+j\leq r,\\
			0,& i+j=r+1.
		\end{cases}
		\]
		Equation~\eqref{to:null-second-row}, with $q=r-1$, now gives
		$b_{2,r-1}=-(r-1)b_{1r},$ 
		whereas the first identity in
		\eqref{to:null-first-row-identities}, with $(p,q)=(2,r-2)$,
		gives $b_{2,r-1}=2b_{1r}.$ Hence $(r+1)b_{1r}=0$, contrary to
		$(c_{1r},b_{1r})\neq(0,0)$. This proves the claim. Therefore, we may consequently rescale the basis successively so that
		$c_{1i}=1$ for all $1\leq i<n.$ Equation~\eqref{to:null-associativity} then implies $c_{ij}=1,$ $i+j\leq n.$
		
		Set $\beta=b_{12}$. By induction on $r$ we show that $b_{1r}=(r-1)\beta$ for $2\leq r<n.$
		The assertion is immediate for $r=2$. If $n\geq4$,
		equation~\eqref{to:null-second-row} with $q=2$ gives 
		$0=b_{22}=4b_{12}-2b_{13},$ so $b_{13}=2\beta$.
		
		Now let $4\leq r<n$ and assume the assertion holds for all
		smaller indices. Equation~\eqref{to:null-second-row} gives
		$$b_{2,r-2}=(r-4)\beta, \quad b_{2,r-1}=(r+1)(r-2)\beta-(r-1)b_{1r}.$$
		On the other hand, the first identity in
		\eqref{to:null-first-row-identities}, with $(p,q)=(2,r-2)$,
		yields
		$$\begin{aligned}
			b_{2,r-1}
			&=b_{2,r-2}+2b_{1r}-2b_{1,r-2}-3b_{12}\\
			&=2b_{1r}-(r+1)\beta.
		\end{aligned}
		$$
		Comparing these expressions gives $b_{1r}=(r-1)\beta$,
		completing the induction.
		
		It remains to determine $b_{ij}$. Since $b_{1j}=(j-1)\beta,$ suppose that $b_{ij}=(j-i)\beta$ for all $j$ with $i+j\leq n$. Substituting this into the second identity in \eqref{to:null-first-row-identities} gives
		$i b_{i+1,j}=i(j-i-1)\beta,$ for $i+j+1\leq n.$
		Induction on $i$ therefore gives $b_{ij}=(j-i)\beta$ for $i+j\leq n.$
		Thus the multiplication table is \eqref{to:regular-family}, and $\mathcal N\simeq\mathcal C_n(\beta)$.
		
		\medskip
		\noindent
		\emph{Case 2: $c_{12}=0$.}
		Suppose first that $c_{1i}=0$ for every $2\leq i<n$.
		Then $b_{1i}\neq0$ and we may assume that $b_{1i}=1$, $2\leq i<n.$
		The first row is therefore that of $\mathcal A_n$, and uniqueness implies $\mathcal N\simeq\mathcal A_n$.
		
		Otherwise, let $r\geq3$ be the least index such that
		$c_{1r}\neq0$. Then $c_{1i}=0$ and $b_{1i}\neq0$ for $2\leq i<r.$ 
		After successive rescaling, we may assume that $b_{1,i}=1.$ 
		The brackets of total degree at most $r$ agree with those of
		$\mathcal A_r$. In particular, $c_{ij}=b_{ij}=0$ for $2\le i,j$ and $i+j\le r$

		Write $c:=c_{1r}\neq0$. At total degree $r+1$,
		equations~\eqref{eq6.4} and \eqref{to:null-second-row} give
		$c_{2,r-1}=0$ and $b_{2,r-1}=(r+1)c.$ 
		For $2\leq p\leq r-2$, put $q=r-p$.
		The second and third identities in
		\eqref{to:null-first-row-identities} reduce to $c_{p+1,q}=0$ and $b_{p+1,q}=-b_{p,q+1},$ respectively. Consequently,
		\[
		c_{i,r+1-i}=0,
		\qquad
		b_{i,r+1-i}=(-1)^{i-2}(r+1)c,
		\qquad 2\leq i\leq r-1.
		\]
		If $r$ were odd, taking $i=(r+1)/2$ would contradict
		$b_{ii}=0$. Hence $r$ is even, and in particular $r\geq4$.
		
		We claim that $r=n-1$. Suppose instead that $r+2\leq n$. Equation~\eqref{eq6.4} gives $c_{2r}=c c_{1,r+1}.$ The first two identities in \eqref{to:null-first-row-identities}, with $(p,q)=(2,r-1)$, then yield
		\[
		3c_{3,r-1}=(r-1)c c_{1,r+1},
		\qquad
		c_{3,r-1}=(r+1)c c_{1,r+1}.
		\]
		Since $c\neq0$, these equalities force $c_{1,r+1}=0$.
		Writing $\theta=b_{1,r+1}$, equation~\eqref{to:null-second-row}
		therefore becomes $b_{2r}=-r c\theta.$
		
		For $3\leq i\leq r$, apply the third identity in
		\eqref{to:null-first-row-identities} with $(p,q)=(i-1,r+2-i)$. Since both $p$ and $q$ lie between $2$ and $r-1$, so
		$b_{1p}=b_{1q}=1$. Using the coefficients already determined
		at total degree $r+1$, we obtain 
		$b_{i,r+2-i}=(-1)^{i-3}(r+1)c\theta - b_{i-1,r+3-i}.$
		Starting from $b_{2r}=-r c\theta$, induction gives
		\[
		b_{i,r+2-i}
		=
		(-1)^{i-1}\bigl((i-1)r+i-2\bigr)c\theta,
		\qquad 2\leq i\leq r.
		\]
		Since $r$ is even, we may take $i=(r+2)/2$. The left-hand
		side is then $b_{ii}=0$, which implies $\theta=0$. Together with $c_{1,r+1}=0$, this contradicts
		$(c_{1,r+1},b_{1,r+1})\neq(0,0)$. The claim follows.
		
		We have therefore shown that $r=n-1$ and that $n$ is odd.
		Finally, replacing $e_n$ by $c e_n$ puts the multiplication
		table in the form \eqref{to:exceptional-family}, with
		$\gamma=\frac{b_{1r}}{c}.$ Hence $\mathcal N\simeq\mathcal D_n(\gamma)$, completing
		the proof.
	\end{proof}

	\begin{lemma}\label{to:null-maximal}
		Every nonzero torus of $\mathcal N$ is contained in a unique maximal torus of rank one.
	\end{lemma}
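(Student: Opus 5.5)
We would prove the lemma by attaching to a single nonzero torus element $t_0$ an intrinsic conformal derivation $d$, and then showing that every torus containing $t_0$ lies in $\C[\partial]d$. Let $\mathcal T$ be a nonzero torus of $\mathcal N$ and fix $0\neq t_0\in\mathcal T$.

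\emph{Step 1: the weight decomposition of $t_0$.} By Proposition~\ref{nf:torus-saturation} there are an adapted basis $\{v_1,\ldots,v_n\}$ and $0\neq a\in\C[\lambda]$ with $(t_0)_\lambda v_i=i\,a(\lambda)v_i$. The polynomials $i\,a(\lambda)$ are pairwise distinct. Hence the summands
\[
\mathcal N_{[i]}=\operatorname{Span}_{\C[\partial]}\{v\in\mathcal N\mid (t_0)_\lambda v=i\,a(\lambda)v\}
\]
are free of rank one, namely $\mathcal N_{[i]}=\C[\partial]v_i$, and $\mathcal N=\bigoplus_i\mathcal N_{[i]}$. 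The submodules $\mathcal N_{[i]}$ are defined by $t_0$ alone, not by the choice of basis. Define $d\in\operatorname{Cend}(\mathcal N)$ by $d_\lambda v=i\,v$ for $v\in\mathcal N_{[i]}$. Exactly as in the proof of Proposition~\ref{to:null-torus-existence}, we have $(t_0)_\lambda=a(\lambda)d_\lambda$. Cancelling $a(\lambda)$ in \eqref{to:polynomial-identity} then shows that $d$ is a conformal derivation. Since $d$ is constant and diagonal, $\C[\partial]d$ is a torus of rank one.

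\emph{Step 2: every torus containing $t_0$ lies in $\C[\partial]d$.} Let $\mathcal T'\supseteq\mathcal T$ be any torus and let $s\in\mathcal T'$. Then $s$ commutes with $t_0$. The leading-coefficient argument of Lemma~\ref{to:diagonal-entry}, i.e.\ relation \eqref{to:projection-entry} applied to $s$ in place of the projection, shows that $s$ preserves each $\mathcal N_{[i]}$. Concretely, writing $s_\lambda v_j=\sum_i s_{ij}(\partial,\lambda)v_i$, commutativity \eqref{to:commuting} gives
\[
i\,a(\lambda)\,s_{ij}(\partial+\lambda,\mu)=j\,a(\lambda)\,s_{ij}(\partial,\mu),
\]
and comparing leading $\partial$-coefficients forces $s_{ij}=0$ for $i\neq j$. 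Thus $s_\lambda v_i=b_i(\partial,\lambda)v_i$. The relation $[s_\lambda s]=0$ and the degree argument of Lemma~\ref{to:diagonal-entry} then give $b_i\in\C[\lambda]$. Alternatively, one can first invoke Theorem~\ref{to:simultaneous} to obtain a common diagonalizing basis and use that its $t_0$-weight groupings are exactly the $\mathcal N_{[i]}$.

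Next, the induced bracket $Q_1\times Q_k\to Q_{k+1}$ is nonzero, as shown in the proof of Proposition~\ref{nf:torus-saturation}. Together with \eqref{to:weight-sum}, this gives $b_{k+1}=b_1+b_k$, hence $b_i=i\,b_1$. Therefore $s_\lambda=b_1(\lambda)d_\lambda$, that is, $s=b_1(-\partial)\,d\in\C[\partial]d$. This proves $\mathcal T'\subseteq\C[\partial]d$.

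\emph{Step 3: conclusion.} Since $\C[\partial]d$ is itself a torus, it is maximal by Step 2. It contains $\mathcal T$, because $\mathcal T\subseteq\C[\partial]d$ by Step 2 applied to $\mathcal T'=\mathcal T$. It has rank one, being generated by the single element $d\neq0$. Any maximal torus $\mathcal M\supseteq\mathcal T$ contains $t_0$, so $\mathcal M\subseteq\C[\partial]d$ by Step 2, and maximality gives $\mathcal M=\C[\partial]d$. This yields the uniqueness.

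The main obstacle is Step 2, namely showing that an arbitrary torus element commuting with $t_0$ must preserve the rank-one weight summands of $t_0$. The point that makes this work is that the weights $i\,a(\lambda)$ of $t_0$ are pairwise distinct, so the off-diagonal leading-coefficient comparison kills all mixing between summands. The remaining steps are direct applications of Proposition~\ref{nf:torus-saturation} and \eqref{to:weight-sum}.
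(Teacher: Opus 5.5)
Your proof is correct and follows the paper's argument step for step. You normalize $t_0$ via Proposition~\ref{nf:torus-saturation} and write $t_0=a(-\partial)d$; you eliminate the off-diagonal entries of a commuting $s$ by comparing leading $\partial$-coefficients in \eqref{to:commuting}; and you obtain $b_i=i\,b_1$ from the nonvanishing of $Q_1\times Q_k\to Q_{k+1}$ together with \eqref{to:weight-sum}. There is one minor difference. The paper takes an arbitrary $s\in\operatorname{CDer}(\mathcal N)$ commuting with $t$, so it describes the whole centralizer, and it gets the $\partial$-independence of $s_{ii}$ from the case $i=j$ of the same commutation relation. Your displayed identity already gives this, so the appeal to $[s_\lambda s]=0$ is unnecessary.
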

	\begin{proof}
		Let $\mathcal T$ be a nonzero torus and fix $0\ne t\in\mathcal T$. By Proposition~\ref{nf:torus-saturation}, there is an adapted basis in which $t_\lambda v_i=i\,a(\lambda)v_i,$ where $a\ne0.$
		As in the proof of Proposition~\ref{to:null-torus-existence},
		$d_\lambda v_i=i v_i$ defines a conformal derivation.
		Then $t=a(-\partial)d$.
		
		Let $s\in\operatorname{CDer}(\mathcal N)$ commute with $t$,
		and write $s_\mu v_j=\sum_i s_{ij}(\partial,\mu)v_i.$ 
		For $i\ne j$, comparison of the leading coefficients in $\partial$ in \eqref{to:commuting} gives $s_{ij}=0$.
		For $i=j$, cancellation yields $s_{ii}(\partial+\lambda,\mu)=s_{ii}(\partial,\mu),$ so $s_{ii}$ is independent of $\partial$. Consequently, $s_\mu v_i=b_i(\mu)v_i$.
		The same argument as in the proof of Proposition~\ref{nf:torus-saturation} then gives $b_i=i b_1$, whence $s=b_1(-\partial)d$.
		Conversely, every element of $\C[\partial]d$ commutes with $t$.
		
		Since $\mathcal T$ is abelian, it is contained in $\C[\partial]d$, which is itself a torus of rank one.
		Moreover, every torus containing $\mathcal T$ is contained in $\C[\partial]d$. Thus, $\C[\partial]d$ is the unique maximal torus containing $\mathcal T$.
	\end{proof}
	
	Each maximal torus determines an adapted grading of $\mathcal N$ and hence an isomorphism $\operatorname{gr}(\mathcal N)\to\mathcal N$. Comparing the isomorphisms associated with two maximal tori yields the following conjugacy result.
	\begin{proposition}\label{to:null-conjugacy}
		Suppose that $\mathcal N$ admits a nonzero torus. Then any two maximal tori of $\mathcal N$ are conjugate by an automorphism of the form $\exp(D)$, where $D\in\operatorname{Der}(\mathcal N)$ is nilpotent.
	\end{proposition}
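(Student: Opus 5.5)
By Lemma~\ref{to:null-maximal}, every maximal torus of $\mathcal N$ has the form $\C[\partial]d$. Here $d_\lambda v_i=i\,v_i$ for some adapted basis $\{v_1,\ldots,v_n\}$, by Proposition~\ref{nf:torus-saturation} and Proposition~\ref{to:null-torus-existence}. So fix two maximal tori $\C[\partial]d$ and $\C[\partial]d'$, with adapted bases $\{v_i\}$ and $\{v_i'\}$ in which $d$ and $d'$ act by $i$ on the $i$-th vector. It suffices to produce an automorphism $\Phi=\exp(D)$, with $D\in\operatorname{Der}(\mathcal N)$ nilpotent, such that $\Phi\, d\,\Phi^{-1}=d'$. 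Then $\Phi$ conjugates the first torus onto the second, since conjugation by a $\C[\partial]$-linear automorphism commutes with the module action $(\partial D)_\lambda=-\lambda D_\lambda$.

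\emph{Step 1: the grading derivation $d_0$.} Its zero mode $\delta=d_0$ is an ordinary derivation. It satisfies $\delta v_i=iv_i$, and the $\C[\partial]$-eigenspace decomposition $\mathcal N=\bigoplus_i\C[\partial]v_i$ splits the filtration $\mathcal N^k$. Likewise $\delta'=d'_0$ satisfies $\delta' v_i'=iv_i'$. Both induce multiplication by $k$ on $\mathcal N^k/\mathcal N^{k+1}$. Hence $\delta-\delta'$ maps $\mathcal N^k$ into $\mathcal N^{k+1}$, so it is a nilpotent derivation. Since $d_\lambda=\delta$ is independent of $\lambda$ (constant matrices), conjugating $\delta$ onto $\delta'$ is the same as conjugating $d$ onto $d'$.

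\emph{Step 2: a conjugating automorphism in the unipotent group.} I would work in the group $U$ of automorphisms $\varphi$ of $\mathcal N$ satisfying $(\varphi-\operatorname{id})(\mathcal N^k)\subseteq\mathcal N^{k+1}$ for all $k$. Its Lie algebra is $\mathfrak u$, the derivations raising the filtration degree, and these are nilpotent because $\mathcal N^{n+1}=0$. On the free module with the finite filtration, the maps $\exp\colon\mathfrak u\to U$ and $\log\colon U\to\mathfrak u$ are mutually inverse polynomial bijections. The Leibniz rule shows that $\exp$ of a derivation is an automorphism, and the reverse direction is standard: $\log$ of a unipotent automorphism is a derivation.

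Next, define the $\C[\partial]$-linear map $\varphi$ by $\varphi(v_i)=v_i'$. I claim $\varphi$ is an automorphism. By Proposition~\ref{to:null-torus-existence}, in both bases the bracket has the form \eqref{eq6.1.01}, $[v_i{}_\mu v_j]=(c_{ij}(i\partial+(i+j)\mu)+b_{ij})v_{i+j}$. The coefficients for $\{v_i'\}$ are obtained from those for $\{v_i\}$ by taking the top graded component, i.e.\ by passing to $\operatorname{gr}(\mathcal N)$. Indeed, writing $v_i'\equiv v_i \bmod \mathcal N^{i+1}$ after rescaling, the identity $[v'_i{}_\mu v'_j]\in\C[\partial,\mu]v'_{i+j}$ forces the coefficient to equal the leading term computed in $\operatorname{gr}(\mathcal N)$. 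Both bases thus give the same structure constants as $\operatorname{gr}(\mathcal N)$. The rescaling is harmless: first adjust each $v_i'$ by a nonzero scalar so that $v_i'\equiv v_i \bmod \mathcal N^{i+1}$. This is legitimate because $\mathcal N^i/\mathcal N^{i+1}$ is free of rank one, so $v_i'\equiv a_iv_i$ with $a_i\in\C^\times$ by a degree argument. I then need to check that the rescaled basis still diagonalizes $d'$ and has matching constants. Granting the claim, $\varphi\in U$ and $\varphi\,\delta\,\varphi^{-1}=\delta'$. Setting $D=\log\varphi$ gives $\varphi=\exp(D)$ with $D\in\operatorname{Der}(\mathcal N)$ nilpotent, as required.

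\emph{Main obstacle.} The delicate point is the rescaling in Step~2. Making $v_i'\equiv v_i \bmod\mathcal N^{i+1}$ requires $a_i=a_1^i$ compatibly with the structure constants. When $c_{1i}$ or $b_{1i}$ is nonzero, these constants are preserved only up to the factor $a_1a_i/a_{i+1}$. So I would first apply the diagonal automorphism $v_i'\mapsto a_i^{-1}v_i'$, which commutes with $d'$ and fixes the torus, to reduce to $a_i=1$. If that fails, I would instead realize the diagonal part as conjugation by an element commuting with $d$, so that only the unipotent part needs to be written as an exponential. Alternatively, a cleaner route to a unipotent conjugator is to solve $\exp(\operatorname{ad}D)\delta=\delta'$ directly, degree by degree: at each filtration step, solve $[D_k,\delta]=$ (the error term). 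This is possible because $\operatorname{ad}\delta$ acts invertibly on the derivations of positive filtration degree $k$, with eigenvalue $-k$.
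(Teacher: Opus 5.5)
Your argument is essentially the paper's. Each maximal torus comes from an adapted basis on which $d$ acts by $i$, and such a basis identifies $\mathcal N$ with $\operatorname{gr}(\mathcal N)$. The comparison map $\varphi$ is then an automorphism that induces the identity on every $\mathcal N^i/\mathcal N^{i+1}$ and intertwines $d$ and $d'$, and $D=\log\varphi$ is the required nilpotent derivation. The paper writes $\varphi=s_1s_2^{-1}$ with $s_r\colon\operatorname{gr}(\mathcal N)\to\mathcal N$, $\bar v_{r,i}\mapsto v_{r,i}$, which absorbs your scalars $a_i$ automatically. For the step you call standard, it shows $\log\varphi$ is a derivation by differentiating the polynomial identity $\varphi^z([u_\lambda v])=[\varphi^z(u)_\lambda\varphi^z(v)]$ at $z=0$.

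The ``main obstacle'' in your last paragraph does not exist, so you should drop the fallbacks there. Replacing $v_i'$ by $w_i=a_i^{-1}v_i'$ is only a change of basis, so no condition like $a_i=a_1^i$ arises. Each $w_i$ is still a weight-$i$ eigenvector of $d'$, hence $[w_i{}_\mu w_j]=g_{ij}(\partial,\mu)w_{i+j}$. Write $[v_i{}_\mu v_j]=f_{ij}(\partial,\mu)v_{i+j}$. Since $w_i\equiv v_i \bmod \mathcal N^{i+1}$, the difference $[w_i{}_\mu w_j]-[v_i{}_\mu v_j]$ lies in $\mathcal N^{i+j+1}[\mu]$. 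Because $\bar w_{i+j}=\bar v_{i+j}$ generates the free rank-one module $\mathcal N^{i+j}/\mathcal N^{i+j+1}$, this gives $g_{ij}=f_{ij}$. This holds whatever the structure constants of the original $v_i'$ were. The ``diagonal automorphism'' $v_i'\mapsto a_i^{-1}v_i'$ you propose is an automorphism only when $a_{i+j}=a_ia_j$ along the nonzero brackets, so that fallback would have been wrong had it been needed.
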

	\begin{proof}
		Let $\mathcal T_1,\mathcal T_2$ be maximal tori. By the proof of Lemma~\ref{to:null-maximal}, for $r=1,2$ there is an adapted basis $v_{r,1},\ldots,v_{r,n}$ such that $\mathcal T_r=\C[\partial]d_r,\ (d_r)_\lambda v_{r,i}=i v_{r,i}.$
		Set $\mathcal N_{r,i}=\C[\partial]v_{r,i}$. By Corollary~\ref{to:weight-decomposition}, $[\mathcal N_{r,i}\cdot\mathcal N_{r,j}]\subseteq\mathcal N_{r,i+j},$ where $\mathcal N_{r,k}=0$ for $k>n$.
		
		Since the basis is adapted, the class $\bar v_{r,i}=v_{r,i}+\mathcal N^{i+1}$ is a module basis of $\mathcal N^i/\mathcal N^{i+1}$. Thus $\bar v_{r,1},\ldots,\bar v_{r,n}$ form a module basis of $\operatorname{gr}(\mathcal N)$, and the map $s_r(\bar v_{r,i})=v_{r,i}$ extend to a module isomorphism $s_r:\operatorname{gr}(\mathcal N)\to\mathcal N$.
		
		For $i+j\leq n$, the weight rule gives $[v_{r,i}{}_\lambda v_{r,j}]\in\C[\partial,\lambda]v_{r,i+j}.$ By the definition of the bracket on $\operatorname{gr}(\mathcal N)$, $[\bar v_{r,i}{}_\lambda\bar v_{r,j}]$ is obtained by reducing this bracket modulo $\mathcal N^{i+j+1}[\lambda]$. The map $s_r$ sends $\bar v_{r,i+j}$ to $v_{r,i+j}$ and leaves its polynomial coefficient unchanged. Hence $s_r([\bar v_{r,i}{}_\lambda\bar v_{r,j}])=[v_{r,i}{}_\lambda v_{r,j}].$ Therefore, $s_r$ is an {\it LCA} isomorphism.
		
		Set $\varphi=s_1s_2^{-1}$. Since each $s_r$ is the inverse of the natural projection on every homogeneous component, $\varphi$ preserves the lower central filtration and induces the identity on each quotient $\mathcal N^i/\mathcal N^{i+1}$. Moreover, $\varphi(\mathcal N_{2,i})=\mathcal N_{1,i}.$ As both $\mathcal N_{2,i}$ and $\mathcal N_{1,i}$ are free of rank one and their images in $\mathcal N^i/\mathcal N^{i+1}$ are module bases, there exists $c_i\in\C^\times$ such that $\varphi(v_{2,i})=c_i v_{1,i}.$ Since $(d_r)_\lambda v_{r,i}=i v_{r,i}$ for $r=1,2$, we have
		$$\varphi\bigl((d_2)_\lambda v_{2,i}\bigr)=i c_i v_{1,i}=(d_1)_\lambda\varphi(v_{2,i})$$
		for every $i$. Hence $\varphi d_2=d_1\varphi,$ and $\varphi d_2\varphi^{-1}=d_1,$ and therefore $\varphi\mathcal T_2\varphi^{-1}=\mathcal T_1.$
		
		Set $Q=\varphi-\operatorname{id}.$ Then we have $Q(\mathcal N^i)\subseteq\mathcal N^{i+1},$ and hence $Q^n=0$. Define
		$$D=\log\varphi=\sum\limits_{k=1}^{n-1}\frac{(-1)^{k-1}}{k}Q^k.$$
		Then $D$ is nilpotent.
		
		For $z\in \mathbb Z_{\geq0}$, $\varphi^z=(\operatorname{id}+Q)^z=\sum\limits_{k=0}^{n-1}\binom{z}{k}Q^k.$ For fixed $u,v\in\mathcal N$, both sides of
		\begin{equation}\label{eq6.9.1}
			\varphi^z([u_\lambda v])=[\varphi^z(u)_\lambda\varphi^z(v)]
		\end{equation}
		are polynomial in $z$. Since the identity holds for every $z\in\mathbb Z_{\geq0}$, it holds for all $z\in\C$.
		
		Since for $k\geq1$ we have 
		$$\left.\frac{d}{dz}\binom{z}{k}\right|_{z=0}=\frac{(-1)^{k-1}}{k}.$$
		Then we obtain 
		$$\left.\frac{d}{dz}\varphi^z\right|_{z=0}=\sum\limits_{k=1}^{n-1}\frac{(-1)^{k-1}}{k}Q^k=D.$$
		Differentiating the identity \eqref{eq6.9.1} at $z=0$ and using $\varphi^0=\operatorname{id}$ gives $D([u_\lambda v])=[D(u)_\lambda v]+[u_\lambda D(v)].$ Thus, $D\in\operatorname{Der}(\mathcal N)$ and $\exp(D)=\varphi.$ 
	\end{proof}
	
	The conjugacy of maximal tori reduces the isomorphism problem for the three families to rescalings of their standard basis vectors. A comparison of the defining brackets under these rescalings completes the classification in Theorem~\ref{to:null-classification}.
	
	\begin{theorem}\label{to:null-isomorphism}
		The algebras $\mathcal A_n$, $\mathcal C_n(\beta)$, and
		$\mathcal D_n(\gamma)$ are pairwise nonisomorphic.
		Moreover, within each of the families $\mathcal C_n(\beta)$ and $\mathcal D_n(\gamma)$, the parameter uniquely determines the isomorphism class.
	\end{theorem}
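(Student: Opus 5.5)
The approach is to reduce every isomorphism between two algebras of the list to a diagonal rescaling of standard bases, and then compare brackets. Let $\psi:\mathcal N\to\mathcal N'$ be an isomorphism between two algebras from $\mathcal A_n$, $\mathcal C_n(\beta)$, $\mathcal D_n(\gamma)$, with standard bases $\{e_i\}$ and $\{e_i'\}$. The derivations $d_\lambda e_i=ie_i$ and $d'_\lambda e'_i=ie'_i$ generate maximal tori, by Lemma~\ref{to:null-maximal}. The torus $\psi\,\C[\partial]d\,\psi^{-1}$ is a maximal torus of $\mathcal N'$. By Proposition~\ref{to:null-conjugacy} there is a nilpotent $D\in\operatorname{Der}(\mathcal N')$ with $\exp(D)\psi\,\C[\partial]d\,\psi^{-1}\exp(-D)=\C[\partial]d'$. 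Replace $\psi$ by $\exp(D)\psi$. Then $\psi d\psi^{-1}=p(-\partial)d'$ for some $p$. Comparing the actions on $\psi(e_1)$ and using rank-one weight spaces, $p$ is a constant and equals $1$, since both weight sets are $\{1,\dots,n\}$ and $\psi$ preserves the lower central series.

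The next step is to show that $\psi$ is diagonal. Since $\psi$ intertwines $d$ and $d'$, it maps the weight space $\C[\partial]e_i$ onto $\C[\partial]e'_i$, so $\psi(e_i)=q_i(\partial)e'_i$ with $q_i$ invertible in $\C[\partial]$. Hence $\psi(e_i)=a_ie'_i$ with $a_i\in\C^\times$. The problem is now to compare the structure polynomials of $\mathcal N$ and $\mathcal N'$ under $e_i\mapsto a_ie'_i$. The coefficient of $e'_{i+j}$ in $[e_i{}_\lambda e_j]$ gets multiplied by $a_ia_j/a_{i+j}$, and this is a nonzero scalar. Therefore the \emph{shape} of each structure polynomial is invariant: whether it is zero, a nonzero constant, or has nonzero $\partial$-part, and the ratio of constant term to linear part.

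Distinguishing the families then proceeds as follows.
\begin{enumerate}[label=(\alph*)]
\item In $\mathcal C_n(\beta)$ the polynomial $[e_1{}_\lambda e_2]$ has nonzero $\partial$-coefficient, while in $\mathcal A_n$ and $\mathcal D_n(\gamma)$ (for $n\ge4$) it is the constant $1$. For $n=3$ only $\mathcal A_3$ and $\mathcal C_3(\beta)$ occur, and the same test applies.
\item $\mathcal A_n$ and $\mathcal D_n(\gamma)$ differ because $[e_2{}_\lambda e_{n-2}]$ is zero in $\mathcal A_n$ and nonzero in $\mathcal D_n(\gamma)$.
\item Within $\mathcal C_n(\beta)$, write $[e_1{}_\lambda e_2]=(\partial+3\lambda+\beta)e_3$. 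Under rescaling this becomes $\kappa(\partial+3\lambda+\beta)$ with $\kappa\neq0$. The ratio of constant term to $\partial$-coefficient is therefore preserved, so $\beta=\beta'$.
\item Within $\mathcal D_n(\gamma)$, the same ratio for $[e_1{}_\lambda e_{n-1}]=(\partial+n\lambda+\gamma)e_n$ gives $\gamma=\gamma'$.
\end{enumerate}

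The main obstacle is the first step. One has to check that the conjugating automorphism from Proposition~\ref{to:null-conjugacy}, composed with $\psi$, genuinely sends $d$ to a $\C[\partial]$-multiple of $d'$ with constant factor $1$. That conjugacy result is stated within a single algebra, so I would transport along $\psi$ first. I would also verify that an automorphism commuting with $d'$ preserves each rank-one weight summand and acts there by a nonzero scalar. The argument is the leading-coefficient comparison of Lemma~\ref{to:diagonal-entry}, applied to $\psi$ written as a matrix over $\C[\partial]$. Once that is settled, the remaining comparisons are the routine coefficient checks listed above.
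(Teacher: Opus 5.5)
Your proposal is correct and follows the paper's proof. Both arguments transport the standard torus along the isomorphism, conjugate it to the standard torus of the target via Proposition~\ref{to:null-conjugacy}, normalize the scalar to $1$ on the abelianization, deduce that the isomorphism is a diagonal rescaling $e_i\mapsto h_ie_i'$, and then compare structure polynomials. The only difference is cosmetic: you separate $\mathcal A_n$ from $\mathcal D_n(\gamma)$ by whether $[e_2{}_\lambda e_{n-2}]$ vanishes, whereas the paper uses the pattern of nonzero $\partial$-coefficients in the first row $[e_1{}_\lambda e_i]$; either invariant works.
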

	
	\begin{proof}
		Let $\varphi:\mathcal N\to\mathcal N'$ be an isomorphism between two of the listed algebras, and let $d_\lambda e_i=i e_i$ and $ d'_\lambda e_i'=i e_i'$ be their standard conformal derivations. By Lemma~\ref{to:null-maximal} and Prposition~\ref{to:null-conjugacy}, there exists $\psi\in\operatorname{Aut}(\mathcal N')$ such that 
		$$\psi\varphi(\C[\partial]d)\varphi^{-1}\psi^{-1}=\C[\partial]d'.$$
		Replacing $\phi$ by $\psi\varphi$, then we obtain
		$\phi(\C[\partial]d)\phi^{-1}=\C[\partial]d'.$ Hence $\varphi d\varphi^{-1}=\gamma d'$ for some $\gamma\in\C^\times$. Applying this equality to $\mathcal N'_1$ and evaluating at $\lambda=0$ gives $\gamma=1$, since both $d$ and $d'$ act as the identity on $\mathcal N_1$ and $\mathcal N'_1$, respectively. Thus $\varphi d=d'\varphi.$
		
		Write $\varphi(e_i)=\sum\limits_jp_{ji}(\partial)e_j'.$ Applying $\varphi d=d'\varphi$ to $e_i$ gives $i\,p_{ji}(\partial)=j\,p_{ji}(\partial+\lambda).$ It follows that $p_{ji}=0$ for $j\ne i$, while $p_{ii}\in\C^\times$. Therefore $\varphi(e_i)=h_i e_i'$ with $h_i\in\C^\times.$
		
		We can now compare the brackets $[e_1{}_\lambda e_i]$ directly. The coefficient of $\partial$ is nonzero only for $i=1$ in $\mathcal A_n$, for every $1\leq i<n$ in $\mathcal C_n(\beta)$, and precisely for $i=1,n-1$ in $\mathcal D_n(\gamma)$. Since $\varphi(e_i)=h_i e_i'$, this vanishing pattern is preserved by isomorphisms. Hence the three families are pairwise nonisomorphic whenever they are defined.
		
		Suppose next that $\mathcal C_n(\beta)\simeq\mathcal C_n(\gamma)$. Comparing $[e_1{}_\lambda e_2]$ gives $(\partial+3\lambda+\beta)h_3=(\partial+3\lambda+\gamma)h_1h_2.$ Comparison of the coefficients of $\partial$ gives $h_3=h_1h_2$, and then comparison of the constant terms yields $\beta=\gamma$.
		
		Similarly, if $\mathcal D_n(\beta)\simeq\mathcal D_n(\gamma)$, comparison of $[e_1{}_\lambda e_{n-1}]$ gives $(\partial+n\lambda+\beta)h_n=(\partial+n\lambda+\gamma)h_1h_{n-1},$ and hence $\beta=\gamma$. The converse in both cases follows immediately from the defining brackets.
	\end{proof}
	
	\section{Solvable extensions with prescribed nilradicals}\label{ex:section}
	
	In this section we extend the classical method for constructing solvable Lie algebras with prescribed nilradicals to the conformal setting. We then determine, up to isomorphism, the unique maximal solvable extension of each algebra classified in Section~\ref{nf:section}.
	
	\begin{proposition}\label{pr:nilradical-saturated}
		The nilradical and the solvable radical of $\mathcal L$
		are saturated in $\mathcal L$.
	\end{proposition}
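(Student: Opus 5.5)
The plan is to show that the saturation of a nilpotent (respectively solvable) ideal is again a nilpotent (respectively solvable) ideal. Maximality then forces $\operatorname{Sat}_{\mathcal L}(\operatorname{Nil}(\mathcal L))=\operatorname{Nil}(\mathcal L)$ and $\operatorname{Sat}_{\mathcal L}(\operatorname{Rad}(\mathcal L))=\operatorname{Rad}(\mathcal L)$. So let $\mathcal I$ be an ideal of $\mathcal L$ and set $\mathcal S=\operatorname{Sat}_{\mathcal L}(\mathcal I)$. This is a $\C[\partial]$-submodule: if $p x,q y\in\mathcal I$ then $pq(x+y)\in\mathcal I$, and $p(\partial x)=\partial(px)\in\mathcal I$.

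\emph{Step 1: $\mathcal S$ is an ideal.} Apply Lemma~\ref{nf:saturation-brackets}(i) with $\mathcal U=\mathcal I$ and $\mathcal V=\mathcal L$, together with skew-symmetry. This gives
\[
[\mathcal L\cdot\mathcal S]=[\mathcal S\cdot\mathcal L]=[\mathcal I\cdot\mathcal L]\subseteq\mathcal I\subseteq\mathcal S.
\]
Directly: if $p(\partial)s\in\mathcal I$, then $p(-\lambda)[s_\lambda x]\in\mathcal I[\lambda]$. Comparing coefficients of the top power of $\lambda$, as in \cite[Lemma~3.1]{DK}, puts every coefficient of $[s_\lambda x]$ in the submodule generated by coefficients of brackets from $\mathcal I$.

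\emph{Step 2: the powers and derived terms of $\mathcal S$ collapse.} By Lemma~\ref{nf:saturation-brackets}(ii), $\mathcal S^2=[\mathcal S\cdot\mathcal S]=[\mathcal I\cdot\mathcal I]=\mathcal I^2$, and likewise $\mathcal S^{(2)}=\mathcal I^{(2)}$. Then induct on $k$. If $\mathcal S^k=\mathcal I^k$ for some $k\ge2$, then by part~(i)
\[
\mathcal S^{k+1}=[\mathcal S\cdot\mathcal S^k]=[\mathcal S\cdot\mathcal I^k]=[\mathcal I\cdot\mathcal I^k]=\mathcal I^{k+1}.
\]
The derived series is handled the same way. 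Since $\mathcal S^{(2)}=\mathcal I^{(2)}$ exactly, the higher derived terms agree from there on.

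\emph{Step 3: conclude.} If $\mathcal I$ is nilpotent then $\mathcal S^m=\mathcal I^m=0$, and if $\mathcal I$ is solvable then $\mathcal S^{(m)}=0$. In each case $\mathcal S$ is a nilpotent, respectively solvable, ideal containing $\mathcal I$. Taking $\mathcal I=\operatorname{Nil}(\mathcal L)$ or $\mathcal I=\operatorname{Rad}(\mathcal L)$, maximality of these ideals gives the required equalities.

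The step I expect to be the main obstacle is justifying Lemma~\ref{nf:saturation-brackets}(i) rigorously. The claim is that the coefficients of $[u_\lambda v]$ lie in the submodule generated by the coefficients of $p(-\lambda)[u_\lambda v]$. Write $[u_\lambda v]=\sum_{k=0}^N\lambda^k c_k$ and $p(-\lambda)=\sum_{l=0}^d a_l\lambda^l$ with $a_d\neq0$. The coefficient of $\lambda^{N+d}$ in the product is $a_dc_N$, which recovers $c_N$. Descending triangular elimination then recovers $c_{N-1},\dots,c_0$ successively, so each $c_k$ lies in the $\C$-span of the product's coefficients. Note that $[\mathcal U\cdot\mathcal V]$ is generated by coefficients of $[u_\lambda v]$ with $u\in\mathcal U$, $v\in\mathcal V$. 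Since $p(\partial)u\in\mathcal U$, the product's coefficients lie in $[\mathcal U\cdot\mathcal V]$, so the argument is purely linear algebra. Torsion in $\mathcal L$ causes no difficulty.
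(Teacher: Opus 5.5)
Your proposal is correct and is essentially the paper's argument: Lemma~\ref{nf:saturation-brackets} shows that the saturation of the radical is an ideal whose lower central (respectively derived) series agrees with that of the radical from the second term on, so it is nilpotent (respectively solvable), and maximality forces equality. Your triangular-elimination proof of part~(i) and your explicit treatment of the derived series only fill in details the paper leaves implicit; in Step~3 take $m\geq 2$, since the equalities $\mathcal S^k=\mathcal I^k$ and $\mathcal S^{(k)}=\mathcal I^{(k)}$ are established only for $k\geq 2$.
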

	\begin{proof} Let $\mathcal N$ be the nilradical of $\mathcal L$ and set $\mathcal S=\operatorname{Sat}_{\mathcal L}(\mathcal N)$.
		Taking $\mathcal U=\mathcal L$ in Lemma~\ref{nf:saturation-brackets}, we obtain
		$[\mathcal L\cdot\mathcal S]=[\mathcal L\cdot\mathcal N]\subseteq\mathcal N \subseteq\mathcal S.$
		Hence $\mathcal S$ is an ideal of $\mathcal L$.
		Two further applications of Lemma~\ref{nf:saturation-brackets} give
		\[\mathcal S^2 = [\mathcal S\cdot\mathcal S] = [\mathcal N\cdot\mathcal S] = [\mathcal N\cdot\mathcal N] = \mathcal N^2.
		\]
		Applying this as the base of induction one can prove $\mathcal S^j=\mathcal N^j$ for every $j\geq2$. 
		Since $\mathcal N$ is nilpotent, so is $\mathcal S$.
		Consequently, $\mathcal S\subseteq\mathcal N$ by the maximality of the nilradical. The opposite inclusion is immediate,
		and hence $\mathcal S=\mathcal N$.
		
		An analogous argument shows that the solvable radical $\mathcal R$ of $\mathcal L$ is saturated in $\mathcal L.$
	\end{proof}
	
	Let $\mathcal N$ be the nilradical of $\mathcal R$. By Proposition~\ref{pr:nilradical-saturated}, $\mathcal R/\mathcal N$ is torsion-free and hence free. Moreover, \cite[Corollary~8.3]{DK} states that $\mathcal R^2$ is nilpotent, so $\mathcal R^2\subseteq\mathcal N$. Therefore, we may write 
	$\mathcal R=\mathcal N\oplus_{\C[\partial]}\mathcal Q$ for some submodule $\mathcal Q\subseteq\mathcal R$. 
	
	\begin{theorem}\label{ex:complement-nonnilpotent}
		For every nonzero $x\in\mathcal Q$, the restriction
		${\operatorname{ad}_x}_{| \mathcal N}$ is nonnilpotent.
	\end{theorem}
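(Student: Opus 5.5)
Argue by contradiction. Suppose $0\neq x\in\mathcal Q$ and ${\operatorname{ad}_x}_{|\mathcal N}$ is nilpotent, i.e.\ $x_{\lambda_m}\cdots x_{\lambda_1}\mathcal N=0$ for some $m$. Consider the submodule $\mathcal J=\mathcal N+\C[\partial]x$. The aim is to show that $\mathcal J$ is a nilpotent ideal of $\mathcal R$ strictly larger than $\mathcal N$. This contradicts the maximality of $\operatorname{Nil}(\mathcal R)$. Strictness is immediate: $\mathcal R=\mathcal N\oplus_{\C[\partial]}\mathcal Q$ and $x\neq0$ give $x\notin\mathcal N$.

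\emph{$\mathcal J$ is an ideal.} By \cite[Corollary~8.3]{DK}, $[\mathcal R\cdot\mathcal R]=\mathcal R^2\subseteq\mathcal N\subseteq\mathcal J$. Hence every bracket $[y_\lambda x]$ with $y\in\mathcal R$ lies in $\mathcal N[\lambda]\subseteq\mathcal J[\lambda]$, and $[y_\lambda\mathcal N]\subseteq\mathcal N[\lambda]$ because $\mathcal N$ is an ideal.

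\emph{$\mathcal J$ is nilpotent.} I would use the conformal Engel theorem of D'Andrea--Kac \cite{DK}: a finite LCA is nilpotent if and only if every element acts nilpotently in the adjoint representation. Alternatively, and more directly, I would use the associative-algebra argument of Lemma~\ref{pr:derivation-nilpotent-ideal}. Let $\mathcal A$ be the associative algebra generated by the operators $(\operatorname{ad}_y)_\beta$ restricted to $\mathcal J$, with $y\in\mathcal J$ and $\beta\in\C$. Let $\mathcal K$ be the ideal generated by those with $y\in\mathcal N$, and let $X$ be the span of the operators $(\operatorname{ad}_{p(\partial)x})_\beta=p(-\beta)(\operatorname{ad}_x)_\beta$.

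Then the following hold:
\begin{itemize}
\item[(a)] $\mathcal K$ is nilpotent. Indeed, $\mathcal K(\mathcal J)\subseteq\mathcal N$ and $\mathcal K(\mathcal N^j)\subseteq\mathcal N^{j+1}$, so $\mathcal K^{c+1}=0$ where $c$ is the nilindex of $\mathcal N$.
\item[(b)] Every $(\operatorname{ad}_x)_\beta$ preserves the filtration $\mathcal N^j$, since $x$ acts by a derivation. Moreover, $[(\operatorname{ad}_x)_\alpha,(\operatorname{ad}_y)_\beta]=(\operatorname{ad}_{[x_\alpha y]})_{\alpha+\beta}\in\mathcal K$ for $y\in\mathcal N$.
\item[(c)] Products of $m$ operators from $X$ kill $\mathcal N$. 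This follows from the assumed nilpotence, evaluated at scalars. Such products also map $\mathcal J$ into $\mathcal N$, since $(\operatorname{ad}_x)_\beta x\in\mathcal N$.
\end{itemize}

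Now consider a word of length $L$ in the generators of $X$ and $\mathcal K$. Using (b), reorder it so that the $\mathcal K$-letters are moved to the left, modulo terms containing more $\mathcal K$-letters. Any word with at least $c+1$ letters from $\mathcal K$ vanishes by (a). A block of at least $m+1$ consecutive $X$-letters vanishes by (c). Hence all words of length greater than roughly $(c+1)(m+1)$ vanish, so $\mathcal A$ is nilpotent.

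\emph{Conclusion.} As at the end of Lemma~\ref{pr:derivation-nilpotent-ideal}, iterated brackets $[x_1{}_{\lambda_1}\cdots[x_{N}{}_{\lambda_N}x_{N+1}]\cdots]$ of elements of $\mathcal J$ vanish under every scalar specialization of the $\lambda_i$. They therefore vanish identically, so $\mathcal J^{N+1}=0$. This is the required contradiction.

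\emph{Main obstacle.} The main difficulty is the bookkeeping in the reordering step. I would handle it cleanly by filtering $\mathcal J$ as $\mathcal J\supseteq\mathcal N\supseteq\mathcal N^2\supseteq\cdots$. On each quotient $\mathcal N^j/\mathcal N^{j+1}$, every $\mathcal K$-operator acts as zero. So $\mathcal A$ acts on each graded piece through the nilpotent operators from $X$. A word of length $m(c+1)$ then drops through all $c+1$ layers and is zero.
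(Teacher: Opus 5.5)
Your proof is correct, but it establishes the nilpotence of $\mathcal N+\C[\partial]x$ by a different route from the paper.

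\textbf{The paper's route.} The paper passes to the free quotient $\overline{\mathcal V}=\mathcal V/\operatorname{Tor}(\mathcal V)$ and applies the conformal Lie theorem to obtain a triangular flag with characters $\phi_i$. Because $\mathcal N\cup\{x\}$ acts nilpotently, the $\phi_i$ vanish on these elements, hence on all of $\mathcal V$. So $\mathcal V$ acts strictly triangularly, giving $\mathcal V^{k+1}\subseteq\operatorname{Tor}(\mathcal V)$ with $k=\operatorname{rank}\overline{\mathcal V}$. Centrality of torsion then gives $\mathcal V^{k+2}=0$.

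\textbf{Your route.} You run a Jacobson/Engel-type argument on scalar specializations of adjoint operators, modeled on Lemma~\ref{pr:derivation-nilpotent-ideal}. It uses only the lower central filtration of $\mathcal N$. Your filtration version is the cleanest form:
\begin{itemize}
\item every letter $(\operatorname{ad}_n)_\gamma$ or $(\operatorname{ad}_x)_\beta$ preserves $\mathcal J\supseteq\mathcal N\supseteq\mathcal N^2\supseteq\cdots$;
\item the letters $(\operatorname{ad}_n)_\gamma$ shift it down one step;
\item $m$ consecutive $x$-letters kill $\mathcal N$.
\end{itemize}
Hence every word of length $m$ lowers the filtration by one step, and words of length $mc$ vanish on $\mathcal J$.

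\textbf{Comparison.} Your approach is self-contained. It needs neither Lie's theorem nor the torsion quotient, and it gives an explicit bound $\mathcal J^{mc+1}=0$ in terms of the nilindex of $\mathcal N$ and the nilpotency degree of $x$. The paper's route is shorter, given the flag machinery of Section~2, and it bounds the nilindex by the rank instead.

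\textbf{The reordering step.} Your reordering via (b) is unnecessary, and its description is inaccurate. The commutator $[(\operatorname{ad}_x)_\alpha,(\operatorname{ad}_n)_\beta]=(\operatorname{ad}_{[x_\alpha n]})_{\alpha+\beta}$ is a single $\mathcal K$-letter. So the correction terms are shorter words with the same number of $\mathcal K$-letters, not words with more $\mathcal K$-letters. Taken literally, moving $\mathcal K$-letters to the left would also require the vanishing of iterated commutators with $m$ $x$-letters, which you never state.

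Your count survives without any reordering, by pigeonhole. A word of length $L>(c+1)(m+1)$ either contains at least $c+1$ $\mathcal K$-letters, and then lies in $\mathcal K^{c+1}=0$ because $\mathcal K$ is a two-sided ideal. Or it has at most $c$ $\mathcal K$-letters, and then at least $(c+1)m+2$ $X$-letters fall into at most $c+1$ runs. Some run then has length at least $m+1$, and such a run kills $\mathcal J$ by (c).

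\textbf{The Engel alternative.} Invoking the conformal Engel theorem would not be a shortcut. You would first have to show that every element $n+p(\partial)x$ is ad-nilpotent on $\mathcal J$, and that is exactly what your associative argument provides.
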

	\begin{proof} Suppose that $(\operatorname{ad}_x)_{|\mathcal N}$ is nilpotent for some $0\ne x\in\mathcal Q$, and set $\mathcal V=\mathcal N\oplus_{\C[\partial]}\C[\partial]x.$ Since $\mathcal R^2\subseteq\mathcal N$, the submodule $\mathcal V$ is a solvable ideal of $\mathcal R$. By the nilpotence of $\mathcal N$ and the assumption on $x$, every $b\in\mathcal N\cup\{x\}$ acts nilpotently on $\mathcal N$. Moreover, 
		$[b_\lambda\mathcal V]\subseteq\mathcal N[\lambda]$,
		so its adjoint action on $\mathcal V$ is also nilpotent.
		
		Set $\overline{\mathcal V}=\mathcal V/\operatorname{Tor}(\mathcal V).$ Then the adjoint action makes $\overline{\mathcal V}$ a free conformal $\mathcal V$-module. Applying \eqref{ca:triangular-character}, we obtain a flag
		$$0=\overline{\mathcal V}_0\subset\overline{\mathcal V}_1\subset\cdots\subset\overline{\mathcal V}_m=\overline{\mathcal V}$$
		whose successive quotients are free of rank one. Choose a generator $\bar v_i$ of
		$\overline{\mathcal V}_i/\overline{\mathcal V}_{i-1}$, and write $b_\lambda\bar v_i=\phi_i(b)(\lambda)\bar v_i,$ $b\in\mathcal V.$
		For $b\in\mathcal N\cup\{x\}$ we have 
		$$0=b_{\lambda_q}\cdots b_{\lambda_1}\bar v_i=\left(\prod_{j=1}^q\phi_i(b)(\lambda_j)\right)\bar v_i$$
		for some $q\geq1$. Since this quotient is free of rank one, $\phi_i(b)=0$, which yields $b_\lambda\overline{\mathcal V}_i\subseteq\overline{\mathcal V}_{i-1}[\lambda]$ for all $b\in\mathcal V.$ Hence, $\mathcal V^{m+1}\subseteq\operatorname{Tor}(\mathcal V).$ The centrality of $\operatorname{Tor}(\mathcal V)$ now gives $\mathcal V^{m+2}=0$. Therefore $\mathcal V$ is a nilpotent ideal of $\mathcal R$ properly containing its nilradical $\mathcal N$, a contradiction.
	\end{proof}

	Theorem~\ref{ex:complement-nonnilpotent} motivate to introduce the the following definition.
	\begin{definition}
		Conformal derivations $d_1,\ldots,d_s$ of an {\it LCA}
		$\mathcal N$ are called \emph{nil-independent} if no nontrivial
		$\C[\partial]$-linear combination of them is nilpotent.
	\end{definition}
	
	Note that the inclusion $\mathcal R^2\subseteq\mathcal N$, together with the nilpotence of $\mathcal N$ and the identity
	$$[\operatorname{ad}(x)_\lambda\operatorname{ad}(y)]_{\lambda+\mu}=\operatorname{ad}([x_\lambda y])_{\lambda+\mu},$$
	implies that every element of $\operatorname{ad}([\mathcal Q\cdot\mathcal Q])$ is nilpotent.
	
	From Theorem~\ref{ex:complement-nonnilpotent}, it follows that the rank of $\mathcal Q$ does not exceed the maximal number of nil-independent conformal derivations of $\mathcal N$ whose pairwise commutators are nilpotent. This observation shows that the method for constructing finite-dimensional solvable Lie algebras with a prescribed nilpotent algebra, extends naturally to the setting of solvable extensions of a given nilpotent {\it LCA} of finite rank. More precisely, the construction method consists of the following steps:
	\begin{itemize}
		\item[-]describe the space $\operatorname{CDer}(\mathcal N)$ for a given nilpotent {\it LCA} $\mathcal N$;
		
		\item[-]determine all nil-independent conformal derivations whose pairwise commutators are nilpotent, say $d_1,\ldots,d_k$, such that $[d_i{}_\lambda d_j]$ is nilpotent for all $1\leq i,j\leq k$;
		
		\item[-]define $\mathcal Q=\operatorname{Span}_{\C[\partial]}\{x_1,\ldots,x_s\}$ by the actions $(\operatorname{ad}_{x_i})_{|\mathcal N}=d_{j_i}$, with $1\leq j_1<\cdots<j_s\leq k$, giving the product $[\mathcal N\cdot\mathcal Q]$;
		
		\item[-]apply conformal skew-symmetry and the Jacobi identity, together with suitable changes of basis in $\mathcal N\bigoplus\mathcal Q$, to determine the product $[\mathcal Q\cdot\mathcal Q]$.
	\end{itemize}
	
	For the construction and classification of solvable Lie algebras
	with a prescribed nilradical, see, for example,
	\cite{Ancochea2006, Campoamor2010, OS, Snobl2010}
	and the references therein.
	
	\begin{definition}
		Let $\mathcal N$ be a finite nilpotent {\it LCA}. A finite solvable
		{\it LCA} $\mathcal R$ is called a \emph{solvable extension} of
		$\mathcal N$ if $\operatorname{Nil}(\mathcal R)=\mathcal N$.
		It is called \emph{maximal} if there is no proper embedding
		$\mathcal R\hookrightarrow\mathcal S$ into another solvable extension
		$\mathcal S$ of $\mathcal N$ whose restriction maps
		$\mathcal N$ onto itself.
	\end{definition}

	The following results establish the uniqueness of the maximal solvable extension of a totally saturated null-filiform {\it LCA} admitting a nonzero torus, in analogy with the Lie algebra case proved in \cite{OS}.
	
	\begin{theorem}\label{ex:maximal-extension}
		Let $\mathcal N$ be a totally saturated null-filiform {\it LCA} that admits a nonzero torus. Then, up to isomorphism, it has a unique maximal solvable extension $\mathcal R_{\mathcal N}=\mathcal T\ltimes\mathcal N,$ where $\mathcal T=\C[\partial]d$ is a maximal torus of $\mathcal N$, acting in a suitable adapted basis $\{e_1,\ldots,e_n\}$ by $d_\lambda e_i=i e_i,$ $1\leq i\leq n.$
	\end{theorem}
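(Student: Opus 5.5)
The plan has three parts: show that $\mathcal R_{\mathcal N}$ is a solvable extension of $\mathcal N$, show that $\operatorname{rank}(\mathcal R/\mathcal N)\le 1$ for every solvable extension $\mathcal R$, and then embed every such $\mathcal R$ into $\mathcal R_{\mathcal N}$ over $\mathcal N$. Together these give existence, maximality and uniqueness.

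\emph{Step 1: $\mathcal R_{\mathcal N}$ is a solvable extension.} By Theorem~\ref{to:null-classification} and Proposition~\ref{to:null-torus-existence} we fix an adapted basis in which $d_\lambda e_i=ie_i$ is a conformal derivation. We form $\mathcal R_{\mathcal N}=\C[\partial]d\ltimes\mathcal N$ with $[d_\lambda e_i]=ie_i$ and $[d_\lambda d]=0$; the Jacobi identity reduces to the derivation identity \eqref{to:polynomial-identity}. Since $\mathcal R_{\mathcal N}^2\subseteq\mathcal N$, the algebra is solvable. By Proposition~\ref{pr:nilradical-saturated} its nilradical is a saturated ideal containing $\mathcal N$. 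If it were strictly larger it would have rank $n+1$, so it would contain $p(\partial)d+x$ with $p\neq0$ and $x\in\mathcal N$. Modulo $\mathcal N^2$ this element acts on $e_1$ by $p(-\lambda)e_1$, which is not nilpotent. Hence $\operatorname{Nil}(\mathcal R_{\mathcal N})=\mathcal N$.

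\emph{Step 2: $\operatorname{rank}(\mathcal R/\mathcal N)\le1$.} Write $\mathcal R=\mathcal N\oplus\mathcal Q$ as before Theorem~\ref{ex:complement-nonnilpotent}. For $x\in\mathcal Q$ set $D^x=(\operatorname{ad}_x)|_{\mathcal N}$, a conformal derivation. Because $\mathcal N/\mathcal N^2$ is free of rank one on $\bar e_1$, we have $D^x_\lambda\bar e_1=g_x(\partial,\lambda)\bar e_1$. By Proposition~\ref{pr:nilpotence-quotient}, $D^x$ is nilpotent if and only if $g_x=0$. Conformal linearity forces $g_x$ to be independent of $\partial$, exactly as in Lemma~\ref{to:diagonal-entry}. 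The map $x\mapsto g_x$ is then $\C[\partial]$-linear into $\C[\lambda]$, with $\partial$ acting as $-\lambda$, and by Theorem~\ref{ex:complement-nonnilpotent} it is injective on $\mathcal Q$. Hence $\operatorname{rank}\mathcal Q\le1$. If $\mathcal Q=0$ then $\mathcal R=\mathcal N$, which embeds trivially, so we may assume $\mathcal Q=\C[\partial]x$.

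\emph{Step 3: embedding $\mathcal R$ into $\mathcal R_{\mathcal N}$.} I expect this to be the main obstacle. Here $\mathcal R=\mathcal N\oplus\C[\partial]x$ and $D=D^x$ is not nilpotent. I would first pass to a Cartan subalgebra: by Theorem~\ref{ca:existence} it exists, and by Corollary~\ref{ca:cartan-derived}, since $\mathcal R^2\subseteq\mathcal N$, it has the form $\mathcal H$ with $\mathcal H+\mathcal N=\mathcal R$. Choose $h=x+n_0\in\mathcal H$. I would then replace $x$ by $h$ and apply the generalized weight decomposition \eqref{ca:weight-decomposition} of $\mathcal N$ under $\mathcal H$. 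The aim is to show that the semisimple part of $\operatorname{ad}_h$ restricted to $\mathcal N$ is a torus element $t$ of $\mathcal N$. The fallback, if extracting a semisimple part conformally is awkward, is to work with the zero mode: the Jordan decomposition of $D_0$ on each finite-dimensional filtration quotient, together with the identities of Proposition~\ref{to:constant-weight-reduction}, should give a diagonalizable conformal derivation. By Lemma~\ref{to:null-maximal}, $t$ lies in a maximal torus $\C[\partial]d'$, and by Proposition~\ref{to:null-conjugacy}, after an automorphism $\exp(D)$ of $\mathcal N$ we get $d'=d$. We may therefore assume $D=a(-\partial)d+\nu$ with $\nu$ nilpotent and commuting with $d$.

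It remains to show that $\nu$ is inner, say $\nu=\operatorname{ad}_y$ with $y\in\mathcal N$ of weight $0$ and on $\mathcal N$. Weight considerations should give this: a derivation commuting with $d$ is diagonal by the proof of Lemma~\ref{to:null-maximal}, and a nilpotent diagonal one vanishes, so $\nu=0$. Next, $[x_\lambda x]$ lies in $\mathcal N$ and has weight zero for $d$, hence equals $0$ because all weights of $\mathcal N$ are positive. Then $x\mapsto a(-\partial)d$, $\mathcal N\to\mathcal N$ (after the conjugation), defines an injective homomorphism $\mathcal R\hookrightarrow\mathcal R_{\mathcal N}$ that maps $\mathcal N$ onto itself. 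This gives both maximality and uniqueness of $\mathcal R_{\mathcal N}$ up to isomorphism.
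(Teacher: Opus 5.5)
Your Steps 1 and 2 are sound and agree with the paper. The map $x\mapsto g_x$ is the paper's $\rho_{\mathcal S}$, and your proof that $\operatorname{Nil}(\mathcal R_{\mathcal N})=\mathcal N$ is more explicit than the paper's. One small correction: $g_x$ is independent of $\partial$ not by conformal linearity alone, but because $[x_\lambda x]\in\mathcal N[\lambda]$ acts trivially on $\mathcal N/\mathcal N^2$; this gives the commutation used in Lemma~\ref{to:diagonal-entry}.

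The genuine gap is in Step 3, where you expected it. You never construct the diagonal part of $\operatorname{ad}_h|_{\mathcal N}$, so the normal form $D=a(-\partial)d+\nu$, with $\nu$ nilpotent and commuting with $d$, is assumed rather than proved. The paper has no Jordan--Chevalley decomposition for conformal derivations, and your fallback does not work:
\begin{itemize}
\item The quotients $\mathcal N^i/\mathcal N^{i+1}$ are free of rank one over $\C[\partial]$, not finite-dimensional.
\item On each quotient $D_0$ is already the scalar $i\,a(0)$, so a Jordan decomposition there is trivial and says nothing about how to split the filtration.
\item $D_0$ can vanish. For $\C[\partial]\,\partial d\ltimes\mathcal N\subset\mathcal R_{\mathcal N}$, a solvable extension your Step 3 must handle, one has $a(\lambda)=-\lambda$.
\item Proposition~\ref{to:constant-weight-reduction} presupposes a diagonalizable element, so it cannot supply one.
\end{itemize}

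The missing idea is that no semisimple part is needed. The nilpotent algebra $\mathcal H$ acts on $\mathcal N^i/\mathcal N^{i+1}$ by a character $\phi_i$ with $\phi_i(h)=i\,a\neq0$, and these characters are pairwise distinct. Apply the exactness of Proposition~\ref{ca:weight-exactness} along $0\to\mathcal N^{i+1}\to\mathcal N^i\to\mathcal N^i/\mathcal N^{i+1}\to0$. It shows that $\mathcal N^{\phi_i}$ lies in $\mathcal N^i$, meets $\mathcal N^{i+1}$ trivially, and maps isomorphically, as an $\mathcal H$-module, onto $\mathcal N^i/\mathcal N^{i+1}$. Hence a generator $v_i$ of $\mathcal N^{\phi_i}$ satisfies $[h_\lambda v_i]=i\,a(\lambda)v_i$ exactly, $\{v_1,\ldots,v_n\}$ is adapted, and $\nu=0$ automatically. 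This is the paper's argument, combined there with Proposition~\ref{ca:cartan-rank}, which gives $\operatorname{rank}\mathcal H=1$.

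Your justification of $[x_\lambda x]=0$ is also not valid as stated. Knowing that $x$ acts on $\mathcal N$ by $a(-\partial)d$ does not make $[x_\lambda x]$ a weight-zero vector. On $\C[\partial]h\oplus\mathcal N$, the brackets $[h_\lambda e_i]=ie_i$ and $[h_\lambda h]=(\partial+2\lambda)e_n$ satisfy every Jacobi identity (use that $d$ is a derivation and $e_n$ is central in $\mathcal N$). Here $h$ acts on $\mathcal N$ exactly as $d$, yet $[h_\lambda h]$ is a nonzero vector of weight $n$; it becomes zero only after replacing $h$ by $h-\frac1n\partial e_n$.

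The vanishing does hold for your $h$, but because $h$ lies in the nilpotent subalgebra $\mathcal H$. The coefficients of $[h_\lambda h]$ lie in $\mathcal H\cap\mathcal N$, on which $\operatorname{ad}_h$ is nilpotent. By \eqref{eq4.3}, the generalized zero-weight part of $\mathcal N$ under $h$ is zero, since every weight $i\,a$ is nonzero. The paper phrases this as $[\mathcal H\cdot\mathcal H]\subseteq\mathcal H\cap\mathcal N=0$.

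Finally, maximality of $\mathcal R_{\mathcal N}$ does not follow just from embedding every extension into it. It follows at once from your Step 2: $g_d=1$ generates $\C[\lambda]$, so injectivity of $x\mapsto g_x$ forces any solvable extension containing $\mathcal R_{\mathcal N}$ to equal it.
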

	
	\begin{proof}
		Let $\mathcal S$ be a finite solvable Lie conformal algebra with nilradical $\mathcal N$. Since the torsion submodule of a Lie conformal algebra is central, we have
		$$\operatorname{Tor}_{\C[\partial]}(\mathcal S)\subseteq Z(\mathcal S)\subseteq \operatorname{Nil}(\mathcal S)=\mathcal N.$$
		As $\mathcal N$ is free, it follows that $\mathcal S$ is torsion-free and hence free over $\C[\partial]$.
		
		Let $\mathcal S$ be a solvable extension of $\mathcal N$. Choose a generator $\bar e$ of $\mathcal N/\mathcal N^2$. For any $x\in\mathcal{S}$, the induced action has the form $x_\lambda\bar e=a_x(\lambda)\bar e.$
		Since $\mathcal N$ acts trivially on this quotient, we obtain
		a $\C[\partial]$-linear map
		$$\rho_{\mathcal S}\colon\mathcal S/\mathcal N\longrightarrow\C[\lambda],\qquad x+\mathcal N\longmapsto a_x(\lambda).$$

		Write $\mathcal S=\mathcal N\oplus_{\C[\partial]}\mathcal Q$.
		If $x\in\mathcal Q$ satisfies $a_x=0$, then
		$(\operatorname{ad}_x)_{|\mathcal N}$ is nilpotent by
		Proposition~\ref{pr:nilpotence-quotient}, so
		Theorem~\ref{ex:complement-nonnilpotent} gives $x=0$.
		Thus $\rho_{\mathcal S}$ is injective, and consequently $\operatorname{rank}(\mathcal S/\mathcal N)\leq1.$
		
		By Proposition~\ref{to:null-torus-existence}, choose an adapted
		basis $\{e_1, \ldots, e_n\}$ such that $d_\lambda e_i=i e_i$ defines a conformal derivation and by Lemma~\ref{to:null-maximal} we get that $\mathcal T=\C[\partial]d$ is a maximal torus of $\mathcal N$.
		
		The semidirect product $\mathcal R_{\mathcal N}=\mathcal T\ltimes\mathcal N$ is solvable. It satisfies the conditions
		$\mathcal R_{\mathcal N}^2=\mathcal N$ and $[\mathcal R_{\mathcal N}\cdot\mathcal N]=\mathcal N.$ Since $\operatorname{Nil}(\mathcal R_{\mathcal N})$ is saturated by Proposition~\ref{pr:nilradical-saturated}, and
		$\mathcal R_{\mathcal N}/\mathcal N$ has rank one, we obtain 
		$\mathcal N=\operatorname{Nil}(\mathcal R_{\mathcal N})$.
		
		Suppose that $\mathcal R_{\mathcal N}\subseteq\mathcal S$,
		where $\mathcal S$ is a free solvable extension with nilradical
		$\mathcal N$. Since $d_\lambda\bar e_1=\bar e_1$, the restriction
		of $\rho_{\mathcal S}$ to $\mathcal R_{\mathcal N}/\mathcal N$ has image $\C[\lambda]$. The injectivity of $\rho_{\mathcal S}$ therefore gives $\mathcal S=\mathcal R_{\mathcal N}$. Thus, $\mathcal R_{\mathcal N}$ is maximal.

		Now let $\mathcal S$ be any maximal solvable extension of $\mathcal N$. Since $\mathcal N$ admits the proper extension $\mathcal R_{\mathcal N}$, we have $\operatorname{rank}(\mathcal S/\mathcal N)=1$.
		Choose $x\in\mathcal S$ whose image generates $\mathcal S/\mathcal N$. In an adapted basis $\{e_1, \ldots, e_n\}$, write
		$[x_\lambda e_i]\equiv a_i(\lambda)e_i\pmod{\mathcal N^{i+1}[\lambda]}.$
		The injectivity of $\rho_{\mathcal S}$ gives $a_1\ne0$. As in the proof of Proposition~\ref{nf:torus-saturation} we obtain $a_i=i\,a,$ where $a:=a_1\ne0.$
		
		Let $\mathcal H$ be a Cartan subalgebra of $\mathcal S$. In the adjoint ideal flag
		$$0=\mathcal N^{n+1}\subset\mathcal N^n\subset\cdots
		\subset\mathcal N\subset\mathcal S,$$
		the characters on $\mathcal N^i/\mathcal N^{i+1}$ take the
		nonzero values $ia$ at $x$, whereas the character on
		$\mathcal S/\mathcal N$ is zero.
		Proposition~\ref{ca:cartan-rank} gives
		$\operatorname{rank}(\mathcal H)=1$.
		Since $\mathcal S^2\subseteq\mathcal N$,
		Corollary~\ref{ca:cartan-derived} yields
		$\mathcal S=\mathcal H\oplus_{\C[\partial]}\mathcal N.$
		Moreover,
		$[\mathcal H\cdot\mathcal H]\subseteq\mathcal H\cap\mathcal N=0$.
		
		Choose a generator $y$ of $\mathcal H$ such that
		$y+\mathcal N=x+\mathcal N$.
		The induced action of $y$ on
		$\mathcal N^i/\mathcal N^{i+1}$ has weight $ia$.
		These weights are pairwise distinct, so the generalized weight
		decomposition splits the lower central filtration into free
		rank-one $\mathcal H$-modules. Thus there is an adapted basis
		$\{v_1, \ldots, v_n\}$ such that $[y_\lambda v_i]=i\,a(\lambda)v_i.$ 
		Cancelling $a(\lambda)$ in the derivation identity shows that
		$d'_\lambda v_i=i v_i$ defines a conformal derivation.
		By Lemma~\ref{to:null-maximal} and the proof of
		Proposition~\ref{to:null-conjugacy}, there exists
		$\psi\in\operatorname{Aut}(\mathcal N)$ such that
		$\psi d'\psi^{-1}=d.$ 
		Consequently, the map defined as 
		$y\mapsto a(-\partial)d $ and $z\mapsto\psi(z)$ for $z\in\mathcal N$ embeds $\mathcal S$ into $\mathcal R_{\mathcal N}$ and maps
		$\mathcal N$ onto itself. By maximality, this embedding is
		surjective. Hence $a\in\C^\times$ and
		$\mathcal S\simeq\mathcal R_{\mathcal N}$.
	\end{proof}
	
	\
	
	\bigskip
	\noindent\textbf{Acknowledgements.}
	The authors thank Professor Qing Wang for helpful discussions
	and suggestions on this work.
	\\[3mm]
	{\bf Funding} {{\small{\ The first author was supported by Basic Research Program of Jiangsu (BK20251784).}}}
	\\[3mm]
	{\bf Data availability} {{\small{\ Data sharing not applicable to this article as no datasets were generated or analysed during
				the current study.}}
		\\[3mm]
		{\bf Conflict of interest} {{\small{\ The authors have no competing interests to declare that are relevant to the content of this
					article.}}
			\\[3mm]
			{\bf AI Declaration} {{\small{\ 
						During the preparation of this manuscript, the authors used AI solely to improve the language, grammar, and readability of the text and to assist with LaTeX typesetting. The tool was not used to generate mathematical results or construct their proofs. The authors reviewed and edited all AI-assisted output and take full responsibility for the final manuscript.}}

			\end{document}